\begin{document}

\newtheorem{theorem}{Theorem}[section]
\newtheorem*{thm}{Main Theorem}
\newtheorem{conj}[theorem]{Conjecture}
\newtheorem{lemma}[theorem]{Lemma}
\newtheorem{cor}[theorem]{Corollary}
\renewcommand{\proofname}{Proof}
\newtheorem{property}[theorem]{Property}
\newtheorem{prop}[theorem]{Proposition}

\theoremstyle{definition}
\newtheorem{defin}[theorem]{Definition}
\newtheorem{question}[theorem]{Question}
\newtheorem{remark}[theorem]{Remark}
\newtheorem{example}[theorem]{Example}

\def \H{{\mathbb H}}
\def \R{{\mathbb R}}
\def \E{{\mathbb E}}
\def \Z{{\mathbb Z}}
\def \S{{\mathbb S}}
\def \G{{\mathcal G}}
\def \Gr{{\mathrm{Gr}}}
\def \Vol{{\mathrm{Vol}}}
\def \l{\langle }
\def \r{\rangle }
\def \[{[ }
\def \]{] }
\def \d{D\,}
\def \sign{\text{\,sign\,}}
\def \conv{\text{\,conv\,}}
\def \wt{\widetilde}
\def \wh{\widehat}
\def \a{\alpha}
\def \b{\beta}
\def \dim{\text{\,dim}}
\def \u{\underline}

\newcommand{\arcsinh}{\mathop{\mathrm{arcsinh}}\nolimits}
\newcommand{\vn}{\mathop{\mathrm{int}}\nolimits}
\newcommand{\rel}{\mathop{\mathrm{rel\:int}}\nolimits}
\newcommand{\w}{\widetilde }
\renewcommand{\o}{\overline }

\author{Anna Felikson}
\address{Independent University of Moscow, B. Vlassievskii 11, 119002 Moscow, Russia}
\curraddr{School of Engineering and Science, Jacobs University Bremen, P.O. Box 750 561, 28725 Bremen, Germany}
\email{a.felikson@jacobs-university.de}
\thanks{Research was partially supported by grants RFBR 07-01-00390-a and RFBR 11-01-00289-a}

\author{Pavel Tumarkin}
\address{Department of Mathematical Sciences, Durham University, Science Laboratories, South Road, Durham, DH1 3LE, UK}
\email{pavel.tumarkin@durham.ac.uk}

\title{Essential hyperbolic Coxeter polytopes}

\maketitle

\begin{abstract} 
We introduce a notion of essential hyperbolic Coxeter polytope as a polytope which fits some minimality conditions. The problem of classification of hyperbolic reflection groups can be easily reduced to classification of essential Coxeter polytopes. We determine a potentially large combinatorial class of polytopes containing, in particular, all the compact hyperbolic Coxeter polytopes of dimension at least $6$ which are known to be essential, and prove that this class contains finitely many polytopes only. We also construct an effective algorithm of classifying polytopes from this class, realize it in four-dimensional case, and formulate a conjecture on finiteness of the number of essential polytopes.  
\end{abstract}

\tableofcontents

\section{Introduction}

\thispagestyle{empty}
The main goal of this paper is to provide a classification algorithm for a large class of compact hyperbolic Coxeter polytopes. A Coxeter polytope in hyperbolic space is a convex fundamental domain for reflection group, i.e. a polytope whose all dihedral angles are integer parts of $\pi$. For a compact hyperbolic Coxeter polytope $P$ we denote by $d$, $n$ and $p$ the dimension, the number of codimension one faces ({\it facets} in the sequel), and the number of pairs of disjoint facets respectively.     
Let ${\mathcal P}$ be the set of polytopes satisfying the following condition: 

$$P\in {\mathcal P}\quad \text{if}\quad d\ge 4 \quad \text{and}\quad p\le n-d-2$$

\begin{thm}[Theorem~\ref{main}]
The set  ${\mathcal P}$ is finite.
All polytopes of this set can be listed by a finite algorithm
provided in Section~\ref{algorithm}.
\end{thm}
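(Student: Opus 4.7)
The plan is to translate the geometric problem into a classification of Gram matrices. Recall that a compact hyperbolic Coxeter polytope $P \subset \H^d$ with $n$ facets is determined up to isometry by its Gram matrix $G(P)$, an $n \times n$ symmetric matrix of signature $(d,1,n-d-1)$ with $1$'s on the diagonal, entries $-\cos(\pi/m_{ij})$ with integer $m_{ij} \geq 2$ at pairs $(i,j)$ of adjacent facets, and entries $-\cosh \rho_{ij}$ with $\rho_{ij} > 0$ at the $p$ pairs of disjoint facets. Classifying $\mathcal{P}$ is therefore equivalent to enumerating such matrices subject to $p \leq n-d-2$.

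First I would establish an a priori bound $n \leq N(d)$ on the number of facets. The hypothesis $p \leq n-d-2$ is very restrictive: it forces the non-adjacency graph of the facets to be sparse, so most pairs of facets intersect in codimension $2$. At every vertex of $P$ the induced subdiagram of the Coxeter diagram is elliptic of rank $d$, so the local structure at each vertex lies in the finite list of rank-$d$ elliptic Coxeter diagrams, and at each edge it lies in the (still finite) list of parabolic or elliptic rank-$(d-1)$ subdiagrams. Combining this local finiteness with Nikulin-type inequalities on the $f$-vector of a compact hyperbolic Coxeter polytope should yield an explicit bound $n \leq N(d)$, where the hypothesis $p \leq n-d-2$ is used to convert averaged statements into pointwise ones.

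Next, for each pair $(d,n)$ in the (now finite) admissible range, I would enumerate the combinatorial types of simple $d$-polytopes with $n$ facets and at most $n-d-2$ pairs of non-adjacent facets. Since the missing-face graph is sparse and the polytope is simple, this is a finite combinatorial enumeration. For each combinatorial type I would then enumerate admissible Coxeter labellings $(m_{ij})$ at adjacent pairs, with each $m_{ij}$ bounded via the elliptic vertex subdiagram constraint. Finally, for each such labelled diagram I would impose the signature condition on $G(P)$ and solve for the $p$ unknowns $\rho_{ij}$: this is an algebraic system in at most $n-d-2$ variables, producing finitely many candidate Gram matrices, each of which can be checked for actual realizability in $\H^d$. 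Running this enumeration top to bottom simultaneously yields the finiteness statement and the finite algorithm referenced in Section~\ref{algorithm}.

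The main obstacle, I expect, is the first step: deriving an effective bound $n \leq N(d)$ that genuinely exploits $p \leq n-d-2$. Nikulin-type inequalities are averaged estimates, and extracting a sharp pointwise bound demands delicate combinatorial bookkeeping on the near-complete $1$-skeleton of the adjacency graph. A secondary difficulty is that the signature condition alone does not distinguish genuine Coxeter polytopes from ``virtual'' Gram matrices satisfying the numerical constraints, so each surviving candidate must be checked for realizability; however, once the enumeration in the earlier steps is known to terminate, this reduces to a bounded verification.
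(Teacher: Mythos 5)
There is a genuine gap, and it sits exactly where you place your ``main obstacle'' --- but the problem is worse than you suggest, and it infects your later steps too. Your plan bounds the Coxeter labels $m_{ij}$ ``via the elliptic vertex subdiagram constraint.'' This constraint bounds nothing: $G_2^{(m)}$ is elliptic for every $m$, and so is $G_2^{(m)}+G_2^{(l)}+(\text{elliptic})$, so a rank-$d$ elliptic vertex diagram can contain edges of arbitrarily large multiplicity (the Esselmann polytopes and the polytope with $k=30$ in Section~\ref{dim4} show this actually happens). Bounding the multiplicities is the hardest part of the paper and occupies all of Section~\ref{multiplicity}: it uses local determinants (Proposition~\ref{loc_sum}), the monotonicity and separation Lemmas~\ref{monoton}--\ref{local}, Borcherds' theorem that a codimension-$2$ face cut out by a multi-multiple edge is itself a Coxeter polytope, and an induction on dimension --- and it is precisely here that the hypothesis $p\le n-d-2$ is used (Lemmas~\ref{dplus2} and~\ref{dots} guarantee the existence of the dotted-edge-free configurations $\l S,x,y\r$ the local-determinant estimates need). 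Without this, your enumeration of labelled diagrams for fixed $(d,n)$ is already infinite.

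The first step of your plan is also not viable as stated, and its order relative to the multiplicity bound is reversed. There is no purely combinatorial bound $n\le N(d)$ extractable from $p\le n-d-2$: duals of $2$-neighborly simplicial $d$-polytopes ($d\ge 4$) are simple polytopes with $p=0$ and arbitrarily many facets, so Nikulin-type $f$-vector inequalities cannot close this step. The paper instead bounds $n$ \emph{after} establishing the multiplicity bound $K_0(d)$: once multiplicities are bounded, any node attached to a fixed rank-$(d+1)$ subdiagram $\l S,x\r$ of signature $(d,1)$ without dotted edges corresponds to a vector in $\R^{d,1}$ determined by finitely many choices of inner products with a basis (Lemmas~\ref{n_0},~\ref{n_1}), while the nodes with two or more dotted edges to $\l S,x\r$ number at most $p/2\le(n-d-2)/2$, yielding $n\le C+n/2+O(d)$ and hence a bound. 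So the hyperbolic linear algebra, not combinatorics of face lattices, is what bounds $n$, and it only works once the labels are already bounded. Your final step (solving for the $\rho_{ij}$ from the signature condition) is salvageable but should be organized as in Lemma~\ref{alg_p}: each dotted weight is determined one at a time from inner products with a basis, rather than as a simultaneous algebraic system whose solution set you would still need to prove finite.
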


\begin{remark}
\label{nonew}
The algorithm was implemented for polytopes of dimension $4$ (see Section~\ref{dim4}), and no new polytopes were produced. In fact, it is not clear whether the implementation of the algorithm will lead to new polytopes or will show that all the polytopes from the set ${\mathcal P}$ are already listed. 

\end{remark}

In general, the number of compact hyperbolic Coxeter polytopes is infinite. In~\cite{M}, Makarov constructed infinite series of polytopes in dimensions $4$ and $5$. In~\cite{All}, Allcock used the $6$-dimensional polytope constructed by Bugaenko~\cite{Bu2} to present an infinite series of $6$-polytopes. Another source of examples is given by right-angled compact Coxeter polytopes, they exist in hyperbolic spaces of dimension at most $4$ (see~\cite{PV}). 

Recently Nikulin~\cite{N} and Agol, Belolipetsky, Storm, Whyte~\cite{ABSW} proved that the number of maximal arithmetic hyperbolic reflection groups is finite. A similar statement concerning general reflection groups (i.e., with maximality or arithmeticity condition dropped) does not hold: examples by Makarov provide fundamental domains of infinite number of non-commensurable reflection groups. 
However, all known examples of infinite series of polytopes (in dimension at least $4$) can be obtained from a finite number of polytopes by a composition of two operations: taking a fundamental domain of a finite index reflection subgroup of the corresponding reflection group, or gluing two Coxeter polytopes along congruent facets. This gives rise to a notion of {\it essential} hyperbolic Coxeter polytope which is minimal with respect to operations above (see Section~\ref{essential} for precise definitions). A classification problem of compact hyperbolic Coxeter polytopes can be easily reduced to a classification of essential polytopes.         

In dimensions $2$ and $3$ compact hyperbolic Coxeter polytopes are completely classified by Poincar\'e~\cite{P} and Andreev~\cite{An}, so in this paper we restrict our attention to polytopes in the spaces of dimension at least $4$. As we have mentioned above, the number of known essential compact hyperbolic Coxeter polytopes is finite (all known polytopes can be obtained by gluing several copies of finite number of polytopes). Moreover, all polytopes of dimension greater than five that are known to be essential belong to ${\mathcal P}$. In Section~\ref{essential} we discuss properties of essential polytopes and present a list of them. While proving that a polytope is essential, the main tool is 
the result of~\cite{subgr} which, roughly speaking, states that gluing operations cannot decrease the number of facets.  

A possible counterpart of the result by Nikulin and Agol, Belolipetsky, Storm, Whyte in non-arithmetic case would be a finiteness of the number of essential polytopes, which we conjecture in Section~\ref{essential}.

\bigskip

All the remaining sections are devoted to the proof of the Main Theorem.
Here is the plan of the proof. First, we split the set of polytopes ${\mathcal P}$ in the following way:
$${\mathcal P}=\bigsqcup\limits_{d\ge 4}{\mathcal P}_{d},\qquad {\mathcal P}_d=\bigsqcup\limits_{n\ge d+1} {\mathcal P}_{(d,n)}$$
where ${\mathcal P}_{(d,n)}$ consists of polytopes from ${\mathcal P}$ of dimension $d$ with $n$ facets. Further, each ${\mathcal P}_{(d,n)}$ can be presented as 
$${\mathcal P_{(d,n)}}=\bigcup\limits_{k\ge 2} {\mathcal P}_{(d,n),k}$$
where ${\mathcal P}_{(d,n),k}$ consists of polytopes from ${\mathcal P}_{(d,n)}$ with dihedral angles not less than $\pi/k$.   

For given $d,n$ and $k$ there are finitely many combinatorial types of polytopes and finitely many ways to assign angles between facets, which implies that ${\mathcal P}_{(d,n),k}$ is finite due to the following fact proved by Andreev~\cite{An0}: an acute-angled polytope $P$ is completely determined by its dihedral angles. Our aim is to prove that for given $d$ and $n$ the set ${\mathcal P}_{(d,n),k}$ is empty for large $k$ (which will imply that ${\mathcal P}_{(d,n)}$ is finite), we do it in Section~\ref{multiplicity}. The key tool here is the trick allowing us to reduce dimension. It is based on the following result due to Borcherds~\cite{Bo}: if a face $F$ of codimension $2$ is an intersection of two facets composing a dihedral angle less than $\pi/3$, then $F$ itself is a Coxeter polytope.

Then we show (Section~\ref{fin}) that for given $d$ the set ${\mathcal P}_{(d,n)}$ is empty for large $n$ (which will imply that ${\mathcal P}_{d}$ is finite). In~\cite{abs}, Vinberg proved that the dimension of a compact hyperbolic Coxeter polytope does not exceed $29$. Therefore, we need to consider sets ${\mathcal P}_{d}$ for a finite number of dimensions only, so ${\mathcal P}$ is finite.

In Section~\ref{cox} we recall basic facts about Coxeter polytopes and their facets, and reproduce the notation introduced in~\cite{nodots}; in Section~\ref{tools} we describe the technique of local determinants and prove several technical lemmas. 
In Section~\ref{algorithm} we construct an algorithm which allows us to list all the polytopes of ${\mathcal P}$. Section~\ref{dim4} is devoted to investigation of $4$-dimensional case.

\medskip

We would like to thank the University of Fribourg, where the most part of the work was carried out, for hospitality and nice working atmosphere,
we thank R.~Kellerhals for invitation, numerous stimulating discussions, and for partial support by SNF projects 200020-113199 and
200020-121506/1.
We are grateful to V.~Emery for communicating unpublished results of~\cite{BE} and to A.~Postnikov for the idea of the proof of Lemma~\ref{esm}.
We also thank IHES (where the previous version of the paper was prepared) for hospitality and excellent working conditions, and the referee for valuable comments.

\section{Coxeter polytopes and their facets}
\label{cox}
In this section we list the essential facts about Coxeter polytopes and their faces.
We mainly follow~\cite{V1} and~\cite{29}, see also~\cite{nodots}.

In what follows we write {\it $d$-polytope} instead of
``$d$-dimensional polytope'', {\it $k$-face} instead of ``$k$-dimensional
face'' and {\it facet} instead of ``face of codimension one''.

\subsection{Coxeter diagrams}
\label{sec-d}
An abstract {\it Coxeter diagram} $\Sigma$ is a finite $1$-dimensional
simplicial complex with weighted edges, where weights $w_{ij}$ are
positive, and if $w_{ij}<1$ then $w_{ij}=\cos \frac{\pi}{k}$ for
some integer $k\ge 3$.
A {\it sub\-dia\-gram} of $\Sigma$ is a subcomplex with the same weights
as in $\Sigma$.
The {\it order}  $|\Sigma|$ is the number of nodes
of the diagram $\Sigma$.

If $\Sigma_1$ and $\Sigma_2$ are sub\-dia\-grams of an abstract
Coxeter diagram $\Sigma$,
we denote by $\l \Sigma_1,\Sigma_2\r$ a sub\-dia\-gram of $\Sigma$
spanned by the nodes of $\Sigma_1$ and $\Sigma_2$.

Given an abstract Coxeter diagram $\Sigma$ with nodes
$v_1,\dots,v_n $ and weights $w_{ij}$, we construct a symmetric
$n\times n$ matrix $\Gr(\Sigma)=(c_{ij})$, where $c_{ii}=1$,
$c_{ij}= -w_{ij}$ if  $v_i$ and $v_j$ are  adjacent, and
$c_{ij}=0$ otherwise.
By determinant, rank and signature of $\Sigma$ we mean the
determinant, the rank and the signature of $\Gr(\Sigma)$.

We can suppress the weights but indicate the same information by
labeling the edges of a Coxeter diagram in the following way: if
the weight $w_{ij}$ equals $\cos\frac{\pi}{m}$, $v_i$ and $v_j$
are joined by an $(m-2)$-fold edge or a simple edge labeled by
$m$; if $w_{ij}=1$, $v_i$ and $v_j$ are joined by a bold edge; if
$w_{ij}>1$, $v_i$ and $v_j$ are joined by a dotted edge labeled
by $w_{ij}$ (or without any label).

%We write $\[v_i,v_j\]=m$ if $w_{ij}$ equal $\cos\frac{\pi}{m}$, and $\[v_i,v_j\]=\infty$ if $w_{ij}\ge 1$.

By a {\it multiple} edge we mean an edge of weight
$\cos\frac{\pi}{m}$ for $m\ge 4$. By a {\it multi-multiple}
edge we mean an edge of weight $\cos\frac{\pi}{m}$ for $m\ge 6$.

\medskip

An abstract Coxeter diagram $\Sigma$ is {\it elliptic} if
$\Gr(\Sigma)$ is positive definite. 
A diagram $\Sigma$ is {\it parabolic} if any indecomposable component of $\Gr(\Sigma)$ is
degenerate and positive semidefinite; a connected diagram $\Sigma$
is a {\it Lann\'er} diagram if $\Gr(\Sigma)$ is indefinite but any
proper sub\-dia\-gram of $\Sigma$ is elliptic; 
$\Sigma$  is {\it hyperbolic} if its negative inertia index
is equal to $1$.

\medskip

The list of connected elliptic and parabolic diagrams with
their standard notation is contained in~\cite[Tables~1,2]{29}.
See also~\cite[Table~3]{29} for the list of Lann\'er diagrams.
We need the following property of these lists: any Lann\'er diagram 
of order greater than $2$ contains a multiple edge.

\bigskip

It is convenient to describe Coxeter polytopes by their Coxeter diagrams.
Let $P$ be a Coxeter polytope with facets $f_1,\dots,f_r$.
The Coxeter diagram $\Sigma(P)$ of the polytope $P$
is a diagram with nodes $v_1,\dots,v_r$; two edges $v_i$ and
$v_j$ are not joined if the hyperplanes spanned by $f_i$ and $f_j$
are orthogonal;
 $v_i$ and $v_j$ are joined by an edge with weight
$$w_{ij}=
\begin{cases}
\cos \frac{\pi}{k},&\text{ if  $f_i$ and $f_j$ form a  dihedral angle
$\frac{\pi}{k}$;}\\
1,&\text{ if $f_i$ is parallel to $f_j$;}  \\
\cosh \rho,&\text{ if $f_i$ and $f_j$ diverge and $\rho$ is the distance
between $f_i$ and $f_j$.}
\end{cases}
$$

\noindent
If $\Sigma=\Sigma(P)$, then $\Gr(\Sigma)$ coincides with the Gram matrix
of outer unit normals to the facets of $P$ (referring to the
standard model of hyperbolic $d$-space in $\R^{d,1}$).

It is shown in~\cite{V1} that a Coxeter diagram $\Sigma(P)$
of a compact $d$-dimensi\-onal hyperbolic polytope $P$ is a
connected diagram of signature $(d,1)$ without parabolic
sub\-dia\-grams.
In particular, $\Sigma(P)$ contains no bold edge, and any indefinite
sub\-dia\-gram is hyperbolic and contains a Lann\'er diagram. 
Moreover, it is shown there that any compact hyperbolic Coxeter $d$-polytope $P$ is
simple (i.e., every face of dimension $k$ is an intersection of exactly $d-k$ facets), and elliptic sub\-dia\-grams of $\Sigma(P)$ are in
one-to-one correspondence with faces of $P$: a $k$-face $f$
corresponds to an elliptic sub\-dia\-gram of order $d-k$ whose
nodes correspond to the facets of $P$ containing $f$.

\begin{prop}[\cite{nodots}, Lemma 1]
\label{same}
Let $\Sigma(P)$ be a Coxeter diagram of a hyperbolic Coxeter $d$-polytope
$P$ of finite volume. Then no proper sub\-dia\-gram of $\Sigma(P)$ is a diagram
of a hyperbolic Coxeter $d$-polytope of finite volume.
\end{prop}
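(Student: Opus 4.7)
The plan is to argue by contradiction. Suppose $\Sigma'$ is a proper subdiagram of $\Sigma(P)$ that is itself the Coxeter diagram of a finite-volume hyperbolic Coxeter $d$-polytope $Q$. My first step is to realize $P$ and $Q$ on a common set of hyperplanes. Let $e_1,\dots,e_n\in\R^{d,1}$ be the outer unit normals to the facets of $P$, and let $I\subsetneq\{1,\dots,n\}$ index the nodes of $\Sigma'$. Since $\Sigma'$ is the diagram of a finite-volume hyperbolic $d$-polytope, its Gram matrix $\Gr(\Sigma')$ has signature $(d,1)$, so the subfamily $\{e_i\}_{i\in I}$ still spans $\R^{d,1}$. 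By the standard uniqueness of a Coxeter polytope with a prescribed Gram matrix, the polytope $Q':=\{x\in\H^d:\langle x,e_i\rangle\le 0,\ i\in I\}$ is isometric to $Q$, and I identify the two. Then $P\subseteq Q$ is obvious (fewer defining inequalities), and the inclusion is strict: for any $j\notin I$, the facet $f_j$ has dimension $d-1$, so $H_j$ cuts through the interior of $Q$.

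The heart of the argument is the reverse inclusion $Q\subseteq P$, obtained by a vertex comparison. By Vinberg's description of finite-volume Coxeter polytopes, the ordinary vertices of $Q$ are in bijection with the elliptic subdiagrams of order $d$ in $\Sigma'$, and the ideal vertices of $Q$ with the parabolic subdiagrams of rank $d$ in $\Sigma'$. Every such subdiagram is also a subdiagram of $\Sigma(P)$, and on either side the corresponding vertex is realized as the intersection of the hyperplanes $H_i$ indexed by the subdiagram --- literally the same point of $\overline{\H^d}$. Hence every vertex of $Q$ (finite or ideal) is a vertex of $P$. Invoking the standard fact that a finite-volume convex polytope in $\H^d$ is the convex hull of its vertices (ordinary together with ideal) --- transparent in the Klein model, where it reduces to the Euclidean statement --- the closure of $Q$ in $\overline{\H^d}$ is contained in the closure of $P$, so $Q\subseteq P$. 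Together with $P\subsetneq Q$ this is the desired contradiction.

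The only subtle point is the non-compact case, where both Vinberg's correspondence and the convex-hull description must be used with ideal vertices (parabolic subdiagrams of total rank $d$) on equal footing with ordinary ones; in the compact case these points are automatic and the argument is immediate.
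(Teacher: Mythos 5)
The paper does not prove this proposition itself but imports it from \cite{nodots}, Lemma~1, and your argument is correct and essentially coincides with the proof given there: realize the subpolytope $Q$ as the intersection of the corresponding half-spaces to obtain $P\subsetneq Q$, then use the vertex--subdiagram correspondence together with the convex-hull description of a finite-volume polytope to obtain $Q\subseteq P$. The only slip is that ideal vertices correspond to parabolic subdiagrams of rank $d-1$, not of rank $d$; this does not affect the structure of the argument.
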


We denote by $\mathcal{C}_d$ the set of all Coxeter diagrams of compact hyperbolic Coxeter 
$d$-polytopes, and by $\mathcal{C}_{d,k}$ the set of Coxeter diagrams of compact hyperbolic 
Coxeter $d$-polytopes containing no edges of multiplicity greater than $k-2$. 

\subsection{Faces of Coxeter polytopes}

Let $P$ be a compact hyperbolic Coxeter $d$-polytope, and denote by $\Sigma(P)$
its Coxeter diagram. Let $S_0$ be an elliptic sub\-dia\-gram of
$\Sigma(P)$.
By~\cite[Theorem~3.1]{V1}, $S_0$ corresponds to a face of $P$ of dimension
$d-|S_0|$. Denote this face by $P(S_0)$. $P(S_0)$ itself is an
acute-angled polytope, but it might not be a Coxeter polytope.
Borcherds proved the following
sufficient condition for $P(S_0)$ to be a Coxeter polytope.

\begin{theorem}[\cite{Bo}, Example 5.6]
\label{bor}
Suppose $P$ is a Coxeter polytope with diagram $\Sigma(P)$,
and $S_0\subset \Sigma(P)$ is an elliptic sub\-dia\-gram that has no
$A_n$ or $D_5$ component. Then $P(S_0)$ itself is a Coxeter polytope.
\end{theorem}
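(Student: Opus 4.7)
The plan is to verify directly that the acute-angled polytope $F=P(S_0)$ has all dihedral angles of the form $\pi/m$. Its facets correspond to nodes $v\notin S_0$ with $\langle S_0,v\rangle$ elliptic, and two such facets $F\cap f_i$, $F\cap f_j$ meet along a codimension-$2$ face of $F$ precisely when $T:=\langle S_0,v_i,v_j\rangle$ is itself elliptic. The dihedral angle $\theta$ of $F$ along this face is obtained by projecting the outer normals of $f_i$ and $f_j$ onto the orthogonal complement of the span of the $S_0$-normals.

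Explicitly, writing $\Gr(T)$ in block form
$$\Gr(T)=\begin{pmatrix}A & B\\ B^T & C\end{pmatrix},\qquad A=\Gr(S_0),$$
with $A$ positive definite since $S_0$ is elliptic, the $2\times 2$ Gram matrix of the projected normals is the Schur complement $M:=C-B^T A^{-1}B$, and $-\cos\theta = M_{12}/\sqrt{M_{11}M_{22}}$. Hence $\theta$ depends only on the pair $(T,S_0)$. If $T$ splits into components with $v_i,v_j$ in different components then $\theta=\pi/2$; otherwise the calculation localizes to the irreducible component of $T$ containing $v_i,v_j$, so one may assume $T$ is irreducible.

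With this reduction, the classification of irreducible finite Coxeter groups leaves finitely many schemes to inspect: $T\in\{A_n,B_n,D_n,E_6,E_7,E_8,F_4,H_3,H_4,I_2(m)\}$ and $S_0$ is a proper subdiagram of $T$ of corank~$2$. For each admissible pair $(T,S_0)$ in which $S_0$ has no $A_n$- and no $D_5$-component, the Schur complement $M$ can be computed directly and the resulting angle $\theta$ verified to be of the form $\pi/m$, $m\geq 2$. The main obstacle is this case analysis: each individual computation is elementary linear algebra, but one must treat the infinite families $B_n$ and $D_n$ uniformly and check that excluding precisely $A_n$- and $D_5$-components is both necessary and sufficient for $M$ to be of Coxeter type. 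Conceptually the exclusions stem from hidden diagram symmetries (the outer automorphism of $A_n$ and an exceptional coincidence of orbits inside $D_5$) which cause the projected normals to fail to form a standard Coxeter arrangement; pinning down that these are the only obstructions is the delicate step.
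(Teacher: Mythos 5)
The first thing to say is that the paper does not prove Theorem~\ref{bor} at all: it is imported verbatim from Borcherds (via Allcock's Theorem~2.2), so there is no in-paper argument to compare against, and your proposal has to be judged on its own terms. Your framework is the right one, and it is essentially how the cited result is actually established: the facets of $P(S_0)$ correspond to nodes $v$ with $\langle S_0,v\rangle$ elliptic, the codimension-$2$ faces to elliptic diagrams $T=\langle S_0,v_i,v_j\rangle$ (this uses that $P$ is simple), the dihedral angle is read off from the Schur complement of $\Gr(S_0)$ in $\Gr(T)$, and the reduction to the irreducible component $T_0$ of $T$ containing $v_i,v_j$ is legitimate, since each component of $S_0$ lies inside a single component of $T$, so the hypothesis on $S_0$ passes to $S_0\cap T_0$.

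The genuine gap is that everything after ``can be computed directly and \dots verified'' is the entire content of the theorem, and it is not done. The claim that for every irreducible elliptic $T_0$ and every corank-$2$ subdiagram $S'$ with no $A_n$ or $D_5$ component the Schur-complement angle is of the form $\pi/m$ is exactly what needs proof; asserting that the computation ``can be'' carried out is not a proof, particularly for the infinite families $B_n$, $D_n$, $I_2(m)$ and for pairs such as $(E_8,E_6)$, $(D_9,D_7)$, $(H_4,I_2(5))$. Worse, the conceptual principle you offer to make the case analysis look routine --- hidden diagram automorphisms, i.e.\ the longest element failing to act as $-1$ --- does not match the exclusion list in the statement: that heuristic would equally flag components of type $E_6$, $D_7$, $D_9,\dots$ and $I_2(2k+1)$, none of which are excluded by the theorem (for instance $S'=E_6$ embeds with corank $2$ only in $E_8$, and $S'=I_2(5)$ only in $H_4$, and in both cases the projected angle does come out as $\pi/m$; the $\pi/10$ arising from $(H_4,I_2(5))$ is even used implicitly in this paper). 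So the heuristic cannot be what decides the cases, the relevant question being how the component sits inside the ambient irreducible $T_0$, and the verification genuinely has to be performed pair by pair. (Also, necessity of the exclusions, which you flag as the delicate step, is not needed for the statement --- only sufficiency is claimed.) As it stands the proposal is a correct plan with the decisive step missing.
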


Facets of $P(S_0)$ correspond to those nodes
that together with $S_0$ comprise an elliptic subdiagram of $\Sigma(P)$.

Let $w\in \Sigma(P)$ be a {\it neighbor} of $S_0$,
so that $w$ attaches to $S_0$ by some edges.
We call $w$ a {\it good neighbor} if $\l S_0, w\r$ is an
elliptic diagram, and {\it bad} otherwise.
We denote by $\overline S_0$ the sub\-dia\-gram of $\Sigma(P)$
consisting of nodes corresponding to facets of $P(S_0)$. The
diagram $\overline S_0$ is spanned by good neighbors of $S_0$ and
by all vertices not joined with $S_0$ (in other words, $\overline
S_0$ is spanned by all nodes of $\Sigma(P)\setminus S_0$ except
bad neighbors of $S_0$). If $P(S_0)$ is a Coxeter polytope, denote
its Coxeter diagram by $\Sigma_{S_0}$.

In~\cite[Theorem 2.2]{All}, Allcock gives a receipt how to compute the dihedral angles of $P(S_0)$. We need the following partial case of this result:

\begin{prop}[\cite{All}]
\label{cor_All}
Suppose that $P(S_0)$ is a Coxeter polytope.
If $S_0$ has no good neighbors then $\overline S_0=\Sigma_{S_0}$.
In particular, this always holds for $S_0=G_2^{(m)}$ where $m\ge 6$.
\end{prop}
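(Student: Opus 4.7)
The plan is to handle both claims in turn: first the equality $\overline{S_0}=\Sigma_{S_0}$ under the good-neighbor hypothesis, and then the verification of this hypothesis for $S_0=G_2^{(m)}$ with $m\ge 6$.

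For the main assertion, recall that $\overline{S_0}$ is by definition spanned by the good neighbors of $S_0$ together with the nodes of $\Sigma(P)\setminus S_0$ not joined to $S_0$. Under the hypothesis that $S_0$ has no good neighbors, every node $v\in\overline{S_0}$ corresponds to a facet of $P$ whose hyperplane is orthogonal to every hyperplane spanned by a facet in $S_0$. Hence the outer unit normal $e_v$ lies in the orthogonal complement $V$ of $\mathrm{span}\{e_u:u\in S_0\}$, and $V$ is precisely the linear span of the hyperplane containing $P(S_0)$. The outer unit normals to the facets of $P(S_0)$ are obtained by orthogonal projection of the $e_v$ onto $V$; but since each $e_v$ already lies in $V$, projection acts as the identity. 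Thus the Gram matrix of outer normals to facets of $P(S_0)$ is exactly the restriction of $\Gr(\Sigma(P))$ to the nodes of $\overline{S_0}$, which coincides with $\Gr(\overline{S_0})$, giving $\overline{S_0}=\Sigma_{S_0}$.

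For the ``in particular'' part, I would suppose for contradiction that $S_0=G_2^{(m)}$ with $m\ge 6$ admits a good neighbor $w$. Then $\langle S_0,w\rangle$ is an elliptic diagram of order $3$, and since $S_0$ itself is a single edge and $w$ is joined to $S_0$, $\langle S_0,w\rangle$ is connected. The list of connected elliptic Coxeter diagrams of order $3$ consists of $A_3$, $B_3$, and $H_3$, whose largest edge weights are respectively $\cos(\pi/3)$, $\cos(\pi/4)$, and $\cos(\pi/5)$. However $\langle S_0,w\rangle$ contains the edge of $S_0$, of weight $\cos(\pi/m)\ge\cos(\pi/6)>\cos(\pi/5)$, contradicting ellipticity. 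Hence $S_0$ has no good neighbors and the previous paragraph applies.

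The only slightly subtle point is the identification of $V$ with the linear subspace containing $P(S_0)$; this is a standard consequence of the Vinberg description of faces of acute-angled polytopes and requires nothing beyond the orthogonality established above. I do not anticipate any genuine obstacle.
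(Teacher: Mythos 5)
Your proof is correct. The paper itself gives no argument here: it simply quotes the relevant special case of Allcock's computation of the dihedral angles of $P(S_0)$ (\cite{All}, Theorem~2.2), so your write-up is a self-contained reconstruction rather than a variant of an argument in the text. What you prove is exactly the content of Allcock's formula in the degenerate case: the normal to a facet of $P(S_0)$ is the normalized orthogonal projection of $e_v$ onto $V=\mathrm{span}\{e_u : u\in S_0\}^{\perp}$, and the general formula acquires correction terms from the components of $e_v$ along $S_0$; when $S_0$ has no good neighbors every $v\in\overline{S_0}$ is a non-neighbor, those components vanish, the projection is the identity, and the Gram matrix restricts without change --- which is precisely why the paper can identify $\overline{S_0}$ with $\Sigma_{S_0}$ in this case. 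Your verification of the ``in particular'' clause is also fine: a good neighbor of $G_2^{(m)}$ would produce a connected elliptic diagram of order $3$ containing an edge of weight $\cos(\pi/m)\ge\cos(\pi/6)$, impossible since $A_3$, $B_3$, $H_3$ carry no edge label above $5$ (equivalently, the $3\times 3$ determinant $1-\cos^2(\pi/m)-\cos^2(\pi/k)\le 0$ for $m\ge 6$, $k\ge 3$). The only point worth making explicit is the one you flag yourself: that $V$ is the ambient quadratic space of the face $P(S_0)$ uses that $S_0$ is elliptic, so that $\mathrm{span}\{e_u\}$ is positive definite and $V$ has signature $(d-|S_0|,1)$; with that remark the argument is complete.
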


Proposition~\ref{cor_All} allows us to identify diagrams  $\overline S_0$ and  $\Sigma_{S_0}$ for $S_0$ being multi-multiple edge.

\section{Technical tools}
\label{tools}

In this section we recall the technique of local determinants derived in~\cite{abs} and prove several properties of determinants of hyperbolic Coxeter diagrams. 

Let $\Sigma$ be a Coxeter diagram, and let $T$ be a sub\-dia\-gram
of $\Sigma$ such that $\det(\Sigma\setminus T)\ne 0$.
A {\it local determinant} of $\Sigma$ on a sub\-dia\-gram $T$ is
$$\det(\Sigma,T)=\frac{\det \Sigma}{\det(\Sigma\!\setminus\! T)}.$$

The following result is the main tool in the consideration of multiplicities of edges. 

\begin{prop}[\cite{abs}, Proposition~12]
\label{loc_sum}
If a Coxeter diagram $\Sigma$ consists of sub\-dia\-grams
$\Sigma_1,\Sigma_2,\dots,\Sigma_l$ having a unique vertex $v$ in common,
and no vertex of $\Sigma_i\setminus v$ attaches to
$\Sigma_j\setminus v$ for $i\ne j$, then
$$ \det(\Sigma,v)=\det(\Sigma_1,v)+\cdots+\det(\Sigma_l,v)-(l-1).$$
\end{prop}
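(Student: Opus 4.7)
The approach is to reduce the identity to the Schur complement formula by exploiting the bordered block structure that the hypothesis forces on $\Gr(\Sigma)$.

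First I would order the vertices of $\Sigma$ so that $v$ comes first, followed by the vertices of $\Sigma_1\setminus v$, then those of $\Sigma_2\setminus v$, and so on. Because no vertex of $\Sigma_i\setminus v$ is joined to any vertex of $\Sigma_j\setminus v$, the off-diagonal blocks linking different $\Sigma_i$'s vanish, so $\Gr(\Sigma)$ becomes a bordered matrix with a $1$ in position $(v,v)$, diagonal blocks $A_i=\Gr(\Sigma_i\setminus v)$, and bordering columns $u_i$ recording the weights on edges from $v$ into $\Sigma_i\setminus v$. In particular $\det(\Sigma\setminus v)=\prod_i\det A_i$, and the invertibility assumption implicit in the symbol $\det(\Sigma,v)$ forces each $A_i$ to be invertible, so each $\det(\Sigma_i,v)$ on the right-hand side is also well-defined.

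The key step is then to apply the Schur complement formula, with $v$ playing the role of the bordering row and column, first to $\Sigma$ itself and then to each subdiagram $\Sigma_i$ separately. This produces
$$\det(\Sigma,v)=1-\sum_{i=1}^l u_i^T A_i^{-1}u_i,\qquad \det(\Sigma_i,v)=1-u_i^T A_i^{-1}u_i.$$
Eliminating the quadratic forms $u_i^T A_i^{-1}u_i$ between these two relations yields
$$\det(\Sigma,v)=1-\sum_{i=1}^l\bigl(1-\det(\Sigma_i,v)\bigr)=\sum_{i=1}^l\det(\Sigma_i,v)-(l-1),$$
which is the claim.

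There is really no serious obstacle here: the identity is essentially a one-line arithmetic consequence of the same formula applied twice, and nothing about hyperbolic geometry or Coxeter polytopes is used beyond the definition of $\Gr$. The only genuinely delicate point is verifying that the invertibility hypothesis $\det(\Sigma\setminus v)\ne 0$ propagates to each $\Sigma_i\setminus v$, which it does automatically because $\det(\Sigma\setminus v)$ factors as the product $\prod_i\det(\Sigma_i\setminus v)$.
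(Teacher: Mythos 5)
Your argument is correct and complete. Note, however, that the paper itself gives no proof of this statement: it is quoted verbatim from Vinberg's work (reference \cite{abs}, Proposition~12), so there is no in-paper argument to compare against. Your Schur-complement route is a clean self-contained verification: ordering the vertices as $v,\ \Sigma_1\setminus v,\ \dots,\ \Sigma_l\setminus v$ does make $\Gr(\Sigma\setminus v)$ block-diagonal with blocks $A_i=\Gr(\Sigma_i\setminus v)$, the factorization $\det(\Sigma\setminus v)=\prod_i\det A_i$ does propagate the nondegeneracy hypothesis to each $\Sigma_i\setminus v$, and the bordered-determinant identity $\det\Sigma=\bigl(\prod_i\det A_i\bigr)\bigl(1-\sum_i u_i^{T}A_i^{-1}u_i\bigr)$ together with its one-block specialization gives the claimed telescoping. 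For what it is worth, Vinberg's original proof (and the technique this paper does use elsewhere, e.g.\ in the proof of Lemma~\ref{monoton}) proceeds instead by expanding the determinant into cyclic products and observing that every cycle lies entirely inside a single $\Sigma_i$; your linear-algebraic argument reaches the same identity with less bookkeeping, at the cost of invoking invertibility of each $A_i$ rather than working purely combinatorially.
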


\noindent
Now we prove several properties of determinants of hyperbolic diagrams. The first one concerns monotonicity.

\begin{lemma}
\label{monoton}
Let $S$ be an elliptic diagram and $x\notin S$ be any node such that 
$\l S,x\r$ is hyperbolic.
Then $\det(\l S,x\r)$ is a strictly decreasing function on label of each
edge joining $x$ with $S$. 

Moreover,  $\det(\l S,x\r)$ is a strictly decreasing function on label of each
edge $\l v_1,v_2\r$ such that $\l S,x\r \setminus \l v_1,v_2\r$ is an elliptic diagram. 
\end{lemma}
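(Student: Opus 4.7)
My plan is to prove the stronger second statement; the first follows immediately by taking $\{v_1,v_2\}=\{x,v\}$ for $v\in S$ adjacent to $x$, since then $\langle S,x\rangle\setminus\{x,v\}=S\setminus v$ is a sub-diagram of the elliptic diagram $S$, hence elliptic.

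To prove the second statement, order the nodes of $\langle S,x\rangle$ so that $v_1,v_2$ come last; let $B$ be the Gram matrix of $\langle S,x\rangle\setminus\langle v_1,v_2\rangle$ (positive definite by hypothesis), $C$ the off-diagonal block, and $t=w_{v_1v_2}$ the weight being varied. Increasing the label of an edge corresponds to increasing $t$ in all three regimes ($t=\cos(\pi/k)$, $t=1$, or $t=\cosh\rho$), so it suffices to show that $\det\Gr(\langle S,x\rangle)$ is strictly decreasing in $t\in(0,\infty)$. The Schur complement formula gives
$$\det\Gr(\langle S,x\rangle)=\det B\cdot\det\bigl(D(t)-M\bigr),\quad D(t)=\begin{pmatrix}1 & -t \\ -t & 1\end{pmatrix},\ M:=C^T B^{-1} C,$$
with $\det B>0$. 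Writing $B=I-W$ with $W\ge 0$ entrywise (and zero diagonal), positive definiteness of $B$ forces all eigenvalues of the symmetric non-negative matrix $W$ to lie below $1$; hence its spectral radius is strictly less than $1$, and the Neumann series $B^{-1}=\sum_{k\ge 0}W^k$ converges to an entrywise-nonnegative matrix. Combined with $C\le 0$ entrywise, this gives $M\ge 0$ entrywise, so in particular $m_{12}\ge 0$. Expanding
$$\det\bigl(D(t)-M\bigr)=(1-m_{11})(1-m_{22})-(t+m_{12})^2$$
and differentiating, we obtain $\partial_t\det\Gr(\langle S,x\rangle)=-2\det B\cdot(t+m_{12})<0$ for $t>0$, yielding the strict decrease.

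The only slightly delicate point is the inverse-nonnegativity of $B$ (the classical fact that a symmetric Stieltjes matrix is inverse-nonnegative), where both ellipticity of $\langle S,x\rangle\setminus\langle v_1,v_2\rangle$ and the Coxeter sign convention $c_{ij}=-w_{ij}$ enter the argument. Everything else is routine linear algebra.
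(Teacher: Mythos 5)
Your proof is correct, but it takes a genuinely different route from the paper's. The paper works directly with the permutation expansion of $\det\Gr(\langle S,x\rangle)$ viewed as a quadratic $aX^2+bX+c$ in the weight $X$ of the distinguished edge: the leading coefficient is $a=-\det(\langle S,x\rangle\setminus\{v_1,v_2\})<0$ by ellipticity, and the linear coefficient is shown to satisfy $b\le 0$ by grouping the terms according to the cyclic product $g_{r_0}$ through $v_1,v_2$ that they contain, so that each group sums to $g_{r_0}\det(\langle S,x\rangle\setminus I)\le 0$ (a nonpositive cyclic product times a positive elliptic minor). You instead isolate the $2\times 2$ block of $\{v_1,v_2\}$ via the Schur complement and reduce everything to the inverse-nonnegativity of the positive definite Stieltjes matrix $B=I-W$, concluding with the closed-form derivative $-2\det B\,(t+m_{12})<0$. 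Both arguments hinge on exactly the same two inputs --- ellipticity of $\langle S,x\rangle\setminus\langle v_1,v_2\rangle$ and the sign convention $c_{ij}=-w_{ij}\le 0$ --- and your leading coefficient $-\det B$ matches the paper's $a$. What your approach buys is a cleaner and more quantitative statement (an explicit formula for $\partial_t\det$) at the cost of importing the M-matrix/Neumann-series fact; the paper's approach is more elementary and self-contained, and its intermediate objects (cyclic products, complementary minors) are notation it reuses elsewhere. One small point to keep in mind in both proofs: the derivative is only guaranteed negative for $t>0$ (at $t=0$ it is $-2\det B\,m_{12}\le 0$), but this is harmless since any actual edge has weight at least $\cos(\pi/3)=1/2$.
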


\begin{proof}
Denote $x=v_1$,  $S=\l v_2,\dots,v_d\r$. Let $a_{ij}$ be the negative of the label of the edge $\l v_i,v_j\r$, $a_{ii}=1$, so that  $\det(\l S,x\r)=\det(a_{ij})$.   
We will prove that  $\det(\l S,x\r)$ is a strictly decreasing function on label $X=-a_{12}=-a_{21}$
of the edge $\l v_1,v_2\r$.

Consider a standard expanding of $\det(\l S,x\r)$ into $d!$ summands $s_i$
(each obtained as a product of $d$ matrix elements). Then

$$
\det(\l S,x\r)=aX^2+bX+c
$$

It is easy to see that $$a=-\det(\l  S,v_1\r \setminus \{ v_1,v_2\})=-\det(S\setminus v_2)<0$$
(since $S$ is an elliptic diagram). Recall that $X\ge 0$ as a label of an edge of a Coxeter diagram,
so it is sufficient to prove that  $b\le 0$.

Each of $d!$ summands of $\det(\l S,x\r)$ can be written as  
$$s_i=\prod\limits_{r=1}^{k_i} g_r \cdot \prod\limits_{r=1}^{l_i}a_{rr}
\eqno(*)
$$
where 
$$g_r=(-1)^{q_r+1}a_{j_1j_2} a_{j_2j_3} a_{j_3j_4} \dots  a_{j_{q_r}j_1}$$
are products of all the weights of edges of pairwise disjoint cycles in $\l S,x\r$ (following~\cite{abs}, we call $g_r$ {\it cyclic products}), $a_{rr}=1$, and
$$l_i=(d+1)-\sum\limits_{r=1}^{k_i}q_r$$
Notice that $a_{ij}\le 0$ for $i\ne j$ implies $g_r\le 0$.
%Notice that at most one of $g_r$ in the decomposition ($*$) contains $x_{12}$.

If a summand $s_i$ contributes to a linear part of  $aX^2+bX+c$ then
$s_i$ contains exactly one of $a_{12}$ and $a_{21}$.
We will prove that the sum of the terms containing $a_{12}$ or $a_{21}$ is non-positive.
% the same is true for $a_{21}$ by symmetry.

Let $g_{r_0}=(-1)^{q_{r_0}+1}a_{j_1j_2} a_{j_2j_3} \dots a_{j_{q_{r_0}}j_1}$ be a cyclic product containing $a_{12}$, denote by $L(g_{r_0})$ the sum of all terms $s_i$ 
containing $g_{r_0}$. It is easy to see that 
$$ L(g_{r_0})= g_{r_0} \det(\l S,x\r\setminus I) , $$ 
where $I=\{ v_{j_1},v_{j_2},\dots,v_{j_{q_{r_0}}} \}$.
Since   $g_{r_0}$ contains $a_{12}$ or $a_{21}$, $I$ contains both $v_1$ and $v_2$, which imply that the diagram $\l S,x\r\setminus I$ is elliptic, so $\det(\l S,x\r\setminus I)>0$.
Hence, $L(g_{r_0})\le 0$. 
Since at most one of $g_r$ in the decomposition ($*$) contains $a_{12}$ or $a_{21}$,
each of $s_i$ contributes to at most one of $L(g_{r})$,
so, $$bX=\sum\limits_r L(g_{r})$$ which is negative.
In view of the inequality $X\ge 0$ we get $b\le 0$, and the first statement of the lemma is proved. 

Now one can notice that we used only the fact that $\l S ,x\r\setminus \{ v_1,v_2\}$ is an elliptic diagram,
so the second statement holds by the same argument.
\end{proof}

\noindent
The next two lemmas provide upper bounds for determinants of some hyperbolic diagrams.

\begin{lemma}
\label{separate1}
There exists a constant $M_1(d)$ which depends only on the dimension $d$ 
such that
for any diagram $\l S,x\r$ satisfying
\begin{itemize}
\item[1)]
$S$ is an elliptic diagram of order $d$,  $\l S,x\r$ is hyperbolic,
\item[2)]
either $|S|\le 3$, or $S$ contains no multi-multiple edges,
\item[3)]
$\l S,x \r$ contains no dotted edges,
\end{itemize}
the inequality
$$
\det(\l S,x \r)\le-M_1(d)<0 
$$
holds.
\end{lemma}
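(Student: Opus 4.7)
The plan is to prove existence of $M_1(d)$ by reducing, via Lemma~\ref{monoton}, the set of eligible diagrams $\langle S,x\rangle$ to a finite collection of ``minimal hyperbolic'' configurations on which $\det$ takes specific negative values; then $-M_1(d)$ is the maximum over these.

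First, the underlying unlabeled graph of $\langle S,x\rangle$ has $d+1$ vertices, so there are only finitely many combinatorial shapes to enumerate. Condition (3) forces each edge to carry an integer label $k\ge 3$. For $|S|\ge 4$, condition (2) restricts every label within $S$ to $\{3,4,5\}$, leaving only finitely many labelings of the $S$-edges; for $|S|\le 3$, the second statement of Lemma~\ref{monoton} provides monotonicity in each $S$-label whenever $\langle S,x\rangle\setminus\{v_1,v_2\}$ is elliptic, which is automatic in all but a few easily enumerable exceptional cases since removing two endpoints leaves a subdiagram of order $\le 2$. For the labels of edges joining $x$ to $S$, which are a priori unbounded, the first statement of Lemma~\ref{monoton} provides strict monotonicity: $\det(\langle S,x\rangle)$ decreases strictly with each such label. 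Hence, once the shape and $S$-labels are fixed, the set of $x$-$S$ label tuples giving a hyperbolic $\langle S,x\rangle$ is upward-closed in $\Z_{\ge 3}^{|E_x|}$, and by Dickson's lemma its minimal elements form a finite set; at each minimal element $\det$ equals a specific negative number, while at any non-minimal element the monotonicity forces $\det$ to be strictly more negative. Taking the maximum of $\det$ over finitely many shapes, finitely many $S$-labelings, and finitely many minimal $x$-$S$ labelings yields a uniform upper bound $-M_1(d)<0$.

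The main technical obstacle is the $|S|\le 3$ regime, where condition (2) imposes no a priori restriction on the labels inside $S$: here one must invoke the second statement of Lemma~\ref{monoton} to reduce those labels to finitely many choices, and separately handle by direct computation the small number of low-order exceptional configurations in which the elliptic-complement hypothesis fails (essentially, when a bold edge appears in $\langle S,x\rangle\setminus\{v_1,v_2\}$). Once this case analysis is dispensed with, the argument is conceptually clean: finiteness of minimal hyperbolic configurations together with monotonicity propagates the uniform negative bound on $\det$ across the entire class of diagrams satisfying (1)--(3), and the worst value obtained defines $M_1(d)$.
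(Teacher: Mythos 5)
Your argument is correct and follows the same skeleton as the paper's proof: both rest entirely on Lemma~\ref{monoton} (the first statement for the edges joining $x$ to $S$, the second for the edges inside $S$ when $|S|\le 3$), and both split into the same two cases dictated by hypothesis (2). The only genuine difference is the device used to extract finiteness. The paper replaces every label exceeding $7$ by $7$, checks that this truncation keeps every elliptic subdiagram elliptic and every hyperbolic subdiagram hyperbolic (so the truncated diagram $U$ still satisfies $\det(U)<0$), and takes $M_1(d)$ from the finitely many possible $U$; you instead observe that the set of hyperbolic label-tuples is upward-closed under the monotonicity and invoke Dickson's lemma to reduce to finitely many minimal configurations. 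Your route spares you the verification that truncation preserves hyperbolicity (minimal elements of the hyperbolic set are hyperbolic by definition), at the cost of being non-effective about where the minimal elements sit; the paper's cap at $7$ is precisely an explicit description of that locus, since $(2,3,7)$ is the smallest hyperbolic triangle. Two small points. First, in the $|S|\le 3$ case your closing sentence speaks of ``finitely many $S$-labelings,'' but there the labels inside $S$ are a priori unbounded, so the Dickson minimization must be run over the full tuple of labels, not only the $x$--$S$ labels with the $S$-labels fixed; this is legitimate given your monotonicity in the $S$-labels, because the only unbounded labels inside an elliptic $S$ of order at most $3$ sit on $G_2^{(m)}$ components, so $S$ remains elliptic as they grow and upward-closedness persists. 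Second, the exceptional case you flag --- a bold edge appearing in $\l S,x\r\setminus\{v_1,v_2\}$ --- cannot occur in the intended application, since Coxeter diagrams of compact polytopes contain no bold edges; the paper silently ignores it, and your extra caution is harmless.
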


\begin{proof}
Denote by $U$ the diagram obtained from $\l S,x\r$ by replacing all the edges joining $x$ with $S$ with labels higher than $7$ by edges labeled by $7$.
It is easy to see that each hyperbolic subdiagram of $\l S,x\r$ remains hyperbolic after this change, and each elliptic subdiagram remains elliptic, so $\det(U)<0$.
By Lemma~\ref{monoton}, $\det(\l S,x\r)$ is a decreasing function on a multiplicity of each edge joining $x$ with $S$.
Hence, $\det(\l S,x\r)<\det(U)$.
If $S$ contains no multi-multiple edges, then given a dimension $d$ there are finitely many possibilities for $U$ (since $|U|=d+1$),
so we may take for $M_1(d)$ any positive number smaller than  minimum of  $|\det(U)|$.

If $S$ is a diagram of order $2$ or $3$ containing multi-multiple edges, 
then the second statement of Lemma~\ref{monoton} implies that $\det(\l S,x\r)$ is a decreasing function on a multiplicity of each edge
of $\l S,x\r$ (here we use the third assumption of the lemma).
We replace each multi-multiple edge of $\l S,x\r$ with label higher than $7$ by an edge labeled by $7$,
denote the obtained diagram by $U'$ and obtain finitely many possibilities for $U'$,
which finishes the proof.
\end{proof}

Denote by $L_{p,q,r}$ a Lann\'er diagram of order $3$ containing sub\-dia\-grams of types $G_2^{(p)}$,
$G_2^{(q)}$ and $G_2^{(r)}$ (see Fig.~\ref{Lanner}).
Let $v$ be the vertex of $L_{p,q,r}$ that does not belong to $G_2^{(r)}$.
Denote by $\d(p,q,r)$ the local determinant $\det(L_{p,q,r},v)$.
It is easy to compute (see e.g.~\cite{abs}) that 
$$\d(p,q,r)=1-\frac{\cos^2(\frac{\pi}{p})+\cos^2(\frac{\pi}{q})+2\cos(\frac{\pi}{p})\cos(\frac{\pi}{q})\cos(\frac{\pi}{r})}{\sin^2(\frac{\pi}{r})}$$
The explicit expression for $\d(p,q,r)$ shows that
$|\d(p,q,r)|$ is an increasing function on each of $p,q,r$ tending to
infinity while $r$ tends to infinity. In particular, one can check that $|\d(p,q,r)|\ge|\d(2,3,7)|\approx 0.329$.

\begin{figure}[!h]
\begin{center}
\psfrag{p}{ $p$}
\psfrag{q}{ $q$}
\psfrag{r}{ $r$}
\psfrag{v}{ $v$}
\epsfig{file=./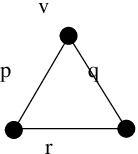,width=0.1\linewidth}
\caption{A diagram $L_{p,q,r}$}
\label{Lanner}
\end{center}
\end{figure}

The next lemma shows that any $\Sigma\in \mathcal{C}_d$ contains a certain hyperbolic subdiagram with determinant of relatively large absolute value (see Section~\ref{sec-d} for definitions involved in the lemma).

\begin{lemma}
\label{separate}
For each dimension $d\ge 2$ and positive integer $K$ there exists a constant 
$M(d,K)$ depending on $d$ and $K$ such that for any Coxeter diagram $\Sigma\in\mathcal{C}_{d,K}$ there exists an elliptic subdiagram $S\subset \Sigma$ of order $d$ and a node $x$ joined with $S$ by at most one dotted edge 
satisfying
$$
\det(\l S,x \r)\le-M(d,K)<0. 
$$

Moreover, if $\Sigma\in\mathcal{C}_2$ or $\mathcal{C}_3$,
then $M(d,K)=M(d)$ does not depend on $K$, and the subdiagram $\l S,x\r$ may be chosen in 
such a way that  for some $v\in S$ the subdiagram $\l S,x \r\setminus v$ contains no dotted 
edges and $S\setminus v$ contains no edges at all.
\end{lemma}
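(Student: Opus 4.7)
The plan is to combine the simple-polytope structure of compact hyperbolic Coxeter polytopes with the monotonicity statement of Lemma~\ref{monoton} and a finiteness argument using the bound $K-2$ on solid-edge multiplicities.

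\emph{Combinatorial step.} Let $P$ be the polytope with diagram $\Sigma$. Since $P$ is simple, its vertices correspond bijectively to elliptic subdiagrams of $\Sigma$ of order $d$. Pick any edge of $P$ with endpoints $v_0,v_1$: by simplicity it lies in exactly $d-1$ common facets, while each of $v_0,v_1$ has one additional facet, denoted $f_0$ (through $v_0$) and $f_x$ (through $v_1$). Let $S$ be the elliptic subdiagram of order $d$ at $v_0$ and $x$ the node of $f_x$. Every common facet $f_i$ contains $v_1\in f_x$, so the edge joining $x$ to the node of $f_i$ cannot be dotted; hence at most one edge from $x$ to $S$ is dotted, namely $s_0x$ (which is dotted iff $f_0$ and $f_x$ are disjoint). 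Since $\Sigma$ has signature $(d,1)$ there is no elliptic subdiagram of order $d+1$, and compactness excludes parabolic subdiagrams, so $\l S,x\r$ is hyperbolic.

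\emph{Determinant bound.} The multiplicity bound coming from $\mathcal{C}_{d,K}$ forces the solid part of $\l S,x\r$ into one of finitely many isomorphism types. If no dotted edge is present, $\l S,x\r$ lies in a finite set of hyperbolic diagrams and one defines $M(d,K)$ as the minimum of $|\det|$ over this set. If the dotted edge $s_0x$ is present with label $t>1$, Lemma~\ref{monoton} yields $\det(\l S,x\r)=at^2+bt+c$ with $a<0$, so $|\det|$ is strictly increasing in $t$ on $[t_0,\infty)$, where $t_0$ is the threshold at which $\l S,x\r$ would become parabolic. The core technical obstacle lies here: one must show that $t$ stays uniformly above $t_0$. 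The plan is to use Proposition~\ref{loc_sum} together with the no-parabolic-subdiagram condition for compact polytopes: a value of $t$ too close to $t_0$ would, in combination with the finitely many possible shapes of the rest of $\Sigma$, force some subdiagram of $\Sigma$ itself to be parabolic, contradicting the compactness of $P$. Taking the minimum of $|\det|$ over the finite list of shapes then produces the bound $M(d,K)$.

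\emph{Moreover part for $d=2,3$.} Here one refines the construction to produce an additional node $v\in S$ with $S\setminus v$ edgeless and $\l S,x\r\setminus v$ free of dotted edges. For $d=2$, take $v=s_0$: then $S\setminus v$ is a single node, and the only edge of $\l S,x\r\setminus v$ is non-dotted by the meeting-at-$v_1$ argument. For $d=3$, the vertex figure of $P$ at any $v_0$ is a spherical Coxeter simplex with at most two edges, so $S$ admits a ``centre'' $v$ incident to every edge of $S$; by choosing the edge $v_0v_1$ of $P$ so that this $v$ plays the role of $s_0$, the same meeting-at-$v_1$ argument forces $\l S,x\r\setminus v$ to have no dotted edges. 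Since the reduced diagrams $\l S,x\r\setminus v$ then involve only elliptic subdiagrams of order at most $d$, finiteness holds independently of $K$ and one obtains a $K$-independent constant $M(d)$.
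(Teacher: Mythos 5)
Your combinatorial step (take an edge of $P$, let $S$ be the vertex diagram at one endpoint and $x$ the extra facet at the other, so that at most the single pair $(s_0,x)$ can be dotted) is sound and matches the skeleton of the paper's argument for $d\ge 4$. The gap is in the determinant bound when the dotted edge is actually present, which you correctly flag as the core obstacle but do not resolve. Your plan — bound $t=\cosh\rho$ uniformly away from the degeneracy threshold $t_0$ by arguing that $t$ near $t_0$ would force a parabolic subdiagram of $\Sigma$ — does not work: $t$ is a continuous parameter (the distance between two disjoint facets), a value of $t$ slightly above $t_0$ makes $\det(\l S,x\r)$ small but negative and produces no degenerate subdiagram anywhere, and there is no a priori uniform lower bound on distances between disjoint facets. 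The paper goes the other way around: since $\det(\l S,x\r)$ is \emph{decreasing} in the label (Lemma~\ref{monoton}), replacing the dotted edge (label $>1$) by an edge labelled $7$ (weight $\cos\frac{\pi}{7}<1$) can only \emph{increase} the determinant, so $\det(\l S,x\r)\le\det(U)$ with $U$ ranging over a finite set; the whole point is then to guarantee $\det(U)<0$. That is exactly why the paper does not take an arbitrary edge of $P$: it first assumes $P$ is not right-angled, picks a non-trivial edge $\l v_1,v_2\r$, builds $S\ni v_1,v_2$, and removes precisely the endpoint $v_1$ to form the $1$-face, so that the possibly-dotted pair $(x,v_1)$ has $v_1$ adjacent to $v_2$ inside $S$ and $U$ contains the hyperbolic path $x\overset{7}{-}v_1\overset{m}{-}v_2$ with $m\ge3$. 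With your arbitrary choice of edge this fails: if $s_0$ is isolated in $S$ and $x$ is orthogonal to $S\setminus s_0$, then $U$ is elliptic and the inequality $\det(\l S,x\r)\le\det(U)$ is vacuous. In the extreme (right-angled) case $\det(\l S,x\r)=1-\cosh^2\rho$, which is not bounded away from $0$ without a genuine geometric input; the paper supplies this by a separate right-angled case, using a Lambert-quadrilateral computation in a $2$-face to produce a pair of disjoint facets with $\cosh\rho\ge\sqrt{3/2}$. Your proposal has no substitute for this case.

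The same issue undermines the ``moreover'' part. Your identification of the node $v$ for $d=2,3$ is fine combinatorially, but the $K$-independence of $M(d)$ is not a finiteness statement about diagram shapes: for $d=2,3$ the multiplicities are unbounded and the dotted label is still a free continuous parameter, so ``finiteness holds independently of $K$'' does not follow. The paper's $d=2$ and $d=3$ cases are almost entirely hyperbolic geometry (monotonicity reducing triangles to $L_{2,3,7}$, and Lambert-quadrilateral identities such as $\sinh\rho_1\sinh\rho_2=\cos(\angle a_1a_2)$ forcing one of two alternating pairs of sides to be far apart), plus an explicit $4\times4$ determinant estimate in dimension $3$. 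To repair your proof you would need to import these geometric distance bounds; the linear-algebraic and combinatorial parts alone cannot produce them.
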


\begin{proof}
We prove the first statement of the lemma separately for $2$-dimensional case, 
$3$-dimensional case, and the case of bounded multiplicity. 
Let $P$ be a polytope with $\Sigma=\Sigma(P)$.

\medskip
\noindent
{\bf Case 1:} $\Sigma\in \mathcal{C}_2$.
We consider 3 cases: $P$ is either a triangle, or a right-angled polygon, or a polygon with more than $3$ sides and at least one strictly acute angle.

\smallskip

If $P$ is a Coxeter triangle, then $\det(\l S,x \r)$
is a decreasing function on the labels of all edges of $\Sigma$ (see Lemma~\ref{monoton}),
and it is easy to check that  
 $$\det(\l S,x \r)\le\det(L_{2,3,7})\approx -0.329$$

\smallskip

Suppose now that $P$ has at least $4$ sides.
Suppose that $P$ is not right-angled, and $a_1,a_2,a_3$ and $a_4$ are its consecutive sides such that $\angle a_1a_2<\pi/2$.
Denote by $v_1,\dots,v_4$ the nodes of $\Sigma$ corresponding to  $a_1,\dots,a_4$ respectively.
We will prove that either $\det(\l v_1, v_2,v_3\r)\le -1/2$ or $\det(\l v_2, v_3,v_4\r)\le -1/2$.

Since $P$ has at least $4$ sides, the line containing $a_1$ does not intersect the line containing $a_3$. 
Let $h_1$ be a unique common perpendicular to $a_1$ and $a_3$ (it is easy to see that $h$ intersects the sides $a_1$ and $a_3$ themselves but not the prolongations). 
%otherwise there exists either a hyperbolic triangle with two non-acute angles or a hyperbolic quadrilateral with $4$ non-acute angles, which is impossible). 
Since $\angle a_1a_2<\pi/2$, $h_1$ does not coincide with $a_2$.
Let $h_2$  be a common perpendicular to $h_1$ and $a_2$ (it may coincide with $a_3$).
Then the lines $a_1,a_2,h_2$ and $h_1$ bound a 3 quadrilateral (i.e. a quadrilateral with $3$ right angles), see Fig.~\ref{lambert}.a. 
Denote by $\rho_1$ and $\rho_2$ the lengths of its sides contained in $h_1$ and $h_2$ respectively.
Then (see e.g.~\cite[Theorem~7.17.1]{B})
$$
\sinh \rho_1\, \sinh \rho_2 = \cos (\angle a_1a_2)\ge \frac{1}{2}$$
Therefore, either $\sinh \rho_1 \ge 1/\sqrt 2$ or  $\sinh \rho_2 \ge 1/\sqrt 2$.
Notice that the distance $\rho(a_1,a_3)$ between $a_1$ and $a_3$ is not less than $\rho_1$,
and the distance  $\rho(a_2,a_4)$ between $a_2$ and $a_4$ is not less than $\rho_2$.
So, either $\cosh^2 \rho(a_1,a_3)\ge 3/2$ or  $\cosh^2 \rho(a_2,a_4)\ge 3/2$.
In the former case
\begin{multline*}
$$\det(\l v_1, v_2,v_3\r)
=1-\cos^2(\angle a_1a_2)-\cos^2(\angle a_2a_3)-\cosh^2 \rho(a_1,a_3)-\\
-2\cos(\angle a_1a_2)\cos(\angle a_2a_3) \cosh \rho(a_1,a_3)\le   
1-\cosh^2 \rho(a_1,a_3)\le -1/2,$$
\end{multline*}
similarly, in the latter case,
$$\det(\l v_2, v_3,v_4\r)\le 1-\cosh^2 \rho(a_2,a_4)\le -1/2$$

\begin{figure}[!h]
\begin{center}
\psfrag{a}{\small a)}
\psfrag{b}{\small b)}
\psfrag{A1}{\footnotesize $A_1$}
\psfrag{1}{\footnotesize $a_1$}
\psfrag{2}{\footnotesize $a_2$}
\psfrag{3}{\footnotesize $a_3$}
\psfrag{n}{\footnotesize $a_n$}
\psfrag{n1}{\footnotesize $a_{n-1}$}
\psfrag{n2}{\footnotesize $a_{n-2}$}
\psfrag{h1}{\footnotesize $h_1$}
\psfrag{h2}{\footnotesize $h_2$}
\psfrag{fi}{\scriptsize $\angle a_1a_2$}
\psfrag{r1}{\footnotesize $\rho_1$}
\psfrag{r2}{\footnotesize $\rho_2$}
\epsfig{file=./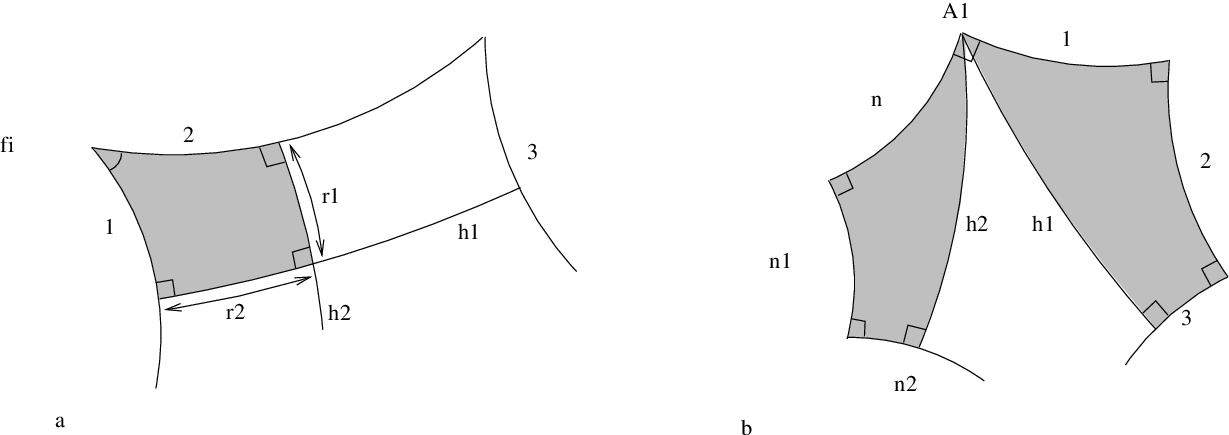,width=0.945\linewidth}
\caption{Lambert quadrilateral}
\label{lambert}
\end{center}
\end{figure}

Suppose now that $P$ is a right-angled polygon.
Let $a_1,a_2,a_3,\dots,a_n$ be its consecutive sides and let $A=a_n\cap a_1$ (see Fig.~\ref{lambert}.b).
Let $h_1$ be a line through $A$ orthogonal to $a_3$ and 
 $h_2$ be a line through $A$ orthogonal to $a_{n-2}$ (notice that since $P$ is right-angled $n\ge 5$, so $n-2\ge 3$). Then the lines $a_1,a_2,a_3$ and $h_1$ bound a Lambert quadrilateral,
as well as the lines $a_n,a_{n-1},a_{n_2}$ and $h_2$ do.
Furthermore, at least one of the angles $\angle a_1h_1$ and   $\angle a_nh_2$ does not exceed $\pi/4$ 
(since $\angle a_1a_n=\pi/2$).
Without loss of generality we may assume that  $\angle a_1h_1\le \pi/4$.
Then 
$$\sinh \rho(a_1,a_3) \sinh \rho(a_2,a_4)\ge \cos (a_1h_1)\ge 1/\sqrt{2}.$$
As it is shown above for the case of non- right-angled polygon,
this implies that either  
 $\det(\l v_1, v_2, v_3\r)\le -1/2$ or  $\det(\l v_2, v_3, v_4\r)\le -1/2$.

\medskip
\noindent
{\bf Case 2:} $\Sigma\in \mathcal{C}_3$.
First suppose that $P$ is a right-angled $3$-polytope.
Then all $2$-faces of $P$ are right-angled polygons, and as it is shown in Case~1, each of them has at least one edge
with length $\rho$ satisfying $\cosh \rho\ge \sqrt{3/2}$.
Let $e$ be such a long edge, $f_1$ and $f_2$ be the facets of $P$ such that $e=f_1\cap f_2$, and $f_3$ and $f_4$ be the remaining facets containing the vertices of $e$. Clearly, we have $\cosh \rho(f_1,f_2)\ge \sqrt{3/2}$ for the distance between $f_1$ and $f_2$.
Denote by $v_1,\dots,v_4$ the nodes of $\Sigma$ corresponding to $f_1,f_2,f_3$  and $f_4$ respectively. Then
$$\det(\l v_1,v_2,v_3,v_4\r)=1-\cosh^2\rho(f_1,f_2)\le -1/2$$
Suppose now that $P$ is not right-angled. Let $f_1$ and $f_2$ be the facets composing an acute dihedral angle, denote $e'=f_1\cap f_2$ the corresponding edge, and let $V$ be any vertex of $e'$. Denote by $f_3$ a unique facets of $P$ such that $V=f_1\cap f_2\cap f_3$. Then at least one of the angles formed by $f_3$ with $f_1$ and $f_2$ is right. Assume that $\angle(f_1f_3)=\pi/2$, and denote $e=f_1\cap f_3$. Let $V'$ be the other vertex of $e$, and $f_4$ be the facet such that $V=f_1\cap f_3\cap f_4$.  
If $f_2\cap f_4\ne \emptyset $ then $P$ is one of the nine $3$-dimensional hyperbolic Coxeter tetrahedra, so $\det(\l v_1,v_2,v_3,v_4\r)\le -M_0$ for some 
constant $M_0>0$. Suppose that $f_2$ does not intersect $f_4$.
Then we have $a_{13}=0$, $a_{24}=-\cosh \rho(f_3,f_4)< -1$,
and $a_{12}$ is not equal to $0$, i.e. is not greater than $-1/2$.
Expanding the determinant
$\det(a_{ij})$ and taking into account that $a_{13}=0$ and $a_{ij}\le 0$ for $i\ne j$, we get
\begin{multline*}
$$\det(\l v_1,v_2,v_3,v_4\r)=1-a_{12}^2-a_{14}^2-a_{23}^2-a_{24}^2-a_{34}^2+\\
+2a_{12}a_{24}a_{14}+2a_{23}a_{34}a_{24}-2a_{12}a_{23}a_{34}a_{14}-a_{12}^2a_{34}^2-a_{14}^2a_{23}^2=\\
=(1-a_{24}^2)-(a_{12}-a_{34})^2-2a_{12}a_{34}-a_{14}^2-a_{23}^2-a_{12}^2a_{34}^2-a_{14}^2a_{23}^2+\\
+2a_{24}(a_{12}a_{14}+a_{23}a_{34})-2a_{12}a_{23}a_{34}a_{14}\le -(a_{12}-a_{34})^2-2a_{12}a_{34}$$
\end{multline*}
If $a_{34}\ne 0$, then $a_{34}\le -1/2$ and $\det(a_{ij})\le -1/2$.
If $a_{34}= 0$, then $a_{12}-a_{34}\le -1/2$ and $\det(a_{ij})\le -1/4$.

\medskip
\noindent
{\bf Case 3:}  $\Sigma\in \mathcal{C}_{d,K},\ d\ge 4$.

First suppose that $P$ is not right-angled, so $\Sigma$ contains at least one non-dotted edge, say $\l v_1,v_2 \r$. Then there exists an elliptic subdiagram $S \subset \Sigma$ containing $\l v_1,v_2 \r$.
A subdiagram $S\setminus v_1$ defines an edge ($1$-face) of $P$, so it has one more 
good neighbor $x$ in $\Sigma$ besides $v_1$. By Lemma~\ref{monoton}, $\det(\l S,x \r)$ 
is a decreasing function on label of each edge joining $x$ with $S$. Notice that $(x,v_1)$ is the only pair of nodes in $\l S,x \r$ which can be joined by a dotted edge.
If it is, replace $\l x,v_1\r$ by the edge labeled by $7$ and denote the obtained diagram by $U$, otherwise let $U=\l S,x\r$. 
Since $\l v_1,v_2\r$ is a non-empty edge, the diagram $U$ is also hyperbolic.
So, $\det(U)<0$ and  $\det(\l S,x\r)\le\det(U)$.
Since $\Sigma\in \mathcal{C}_{d,K}$, there are finitely many possibilities for $U$,
so we may take as $M(d,K)$ any positive number smaller than minimum of $|\det(U)|$.

Now suppose that $P$ is right-angled. Any $2$-face of $P$ is right-angled polygon, so, by Case~1, it has an edge $VV'$ of length $\rho\ge \arcsinh{(1/\sqrt{2})}$. Denote by $f_1,\dots,f_d$ the facets containing $V$, we may assume that $V'$ is the intersection of facets $f_0,f_1,\dots,f_{d-1}$. Let $v_0,\dots,v_d$ be the corresponding nodes of $\Sigma$. Then the diagram $\l v_0,\dots,v_d\r$ contains a unique edge $\l v_0,v_d\r$, and $\det(\l v_0,\dots,v_d\r)\le -1/2$. 

\medskip

So, the first statement of the lemma is proved. The second statement follows immediately from the choice of $\l S,x\r$ and the constant $M(d)$ in low-dimensional cases. 
% To prove the second statement notice,
%that in case of $\Sigma\in \mathcal{C}_2$ the diagram $S\setminus v$ is just one node for any $v\in S$, so it cannot contain any edge.
%In case of $\Sigma\in \mathcal{C}_3$ the diagram  $S\setminus v$ contains two nodes. 
%However, the diagram $\l S,x\r$ was chosen as a complete diagram of some right-angled edge, i.e.an edge which is an intersection of two orthogonal faces. Chosen $u$ and $w$ as nodes corresponding to these faces, and taking $v=S\setminus \l u,w\r$, we obtain $S\setminus v=\l u,w\r$ which is a diagram containing two separate nodes.  
\end{proof}

\noindent
The following lemma provides an upper bound for a local determinant of hyperbolic diagram of special type. Later on, we find such a subdiagram in any Coxeter diagram of a polytope from ${\mathcal P}$, and use Lemma~\ref{local} to get a bound for a multiplicity of an edge.

\begin{lemma}
\label{local}
Given $M>0$, integer $k\ge 5$ and dimension $d\ge 2$,
there exists a constant $C(d,K,M)>0$   
such that for any diagram $\l S,x,y\r$ satisfying the following five conditions: 
\begin{itemize}
\item[1)]
$S$ is an elliptic subdiagram of order $d$, while  $\l S,x\r$ and  $\l S,x,y\r$ 
are hyperbolic diagrams,
\item[2)]
$x$ is joined with $S$ by at most one dotted edge,
\item[3)]
if $v\in S $ and $\l v,x\r$ is a dotted edge, then  labels of edges of $S\setminus v$ do not exceed $K$,
\item[4)]
$y$ is not joined with  $\l S,x\r$ by any dotted edge,
\item[5)]
 $\det(\l S,x \r)<-M<0$,
\end{itemize}
the inequality
$$ 0\le\det(\l S,x,y\r,y)\le C(d,K,M)$$
holds.
\end{lemma}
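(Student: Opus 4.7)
The plan is to use the identity
$$\det(\l S,x,y\r, y) = \frac{\det(\l S,x,y\r)}{\det(\l S,x\r)}$$
and argue that the numerator is never much larger in absolute value than the denominator. The lower bound $0 \le \det(\l S,x,y\r, y)$ is immediate from signs: both $\l S,x\r$ and $\l S,x,y\r$ are hyperbolic (negative inertia index $1$), so both determinants are nonpositive, and assumption 5 guarantees that the denominator is strictly negative.

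For the upper bound I would split into two cases depending on whether $x$ is joined to $S$ by a dotted edge. If it is not, then together with assumption 4 every entry of $\Gr(\l S,x,y\r)$ lies in $[-1,1]$, and Hadamard's inequality bounds $|\det(\l S,x,y\r)|$ by a function of $d$ alone. Combined with $|\det(\l S,x\r)| > M$, this yields the required estimate.

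In the main case, $\l v,x\r$ is the unique dotted edge of $\l S,x,y\r$, of weight $w>1$, and every other matrix entry lies in $[-1,1]$. I would treat $w$ as a parameter and expand both determinants as degree-$2$ polynomials in $w$:
$$\det(\l S,x\r) = -\det(S \setminus v)\, w^2 + A_1 w + A_0, \qquad \det(\l S,x,y\r) = -\det(\l S \setminus v, y\r)\, w^2 + B_1 w + B_0,$$
where the coefficients $A_i$, $B_i$ involve only the remaining matrix entries and are therefore bounded in terms of $d$ alone; the same is true of $|\det(\l S \setminus v, y\r)|$. For $w$ larger than a threshold $W_0 = W_0(d,K)$, the quadratic terms dominate and the ratio is controlled by $|\det(\l S \setminus v, y\r)|/\det(S \setminus v)$. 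For $w \le W_0$ all matrix entries are bounded, so $|\det(\l S,x,y\r)|$ is bounded by a constant depending on $d$, $K$, $W_0$, and dividing by $M$ closes the argument.

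The main obstacle is to ensure that $\det(S \setminus v)$ is bounded below by a positive constant, and this is precisely where assumption 3 is needed: without a bound on the labels of edges of $S \setminus v$, a subdiagram $G_2^{(m)} \subset S \setminus v$ with $m$ large would force $\det(S\setminus v) \to 0$ and ruin the large-$w$ estimate. Under the $K$-bound from assumption 3 the elliptic diagram $S \setminus v$ ranges over a finite list (depending only on $d$ and $K$), so $\det(S \setminus v) \ge \delta(d, K) > 0$, and $C(d, K, M)$ can be taken as the maximum of the bounds obtained in the three regimes.
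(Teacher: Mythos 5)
Your proposal is correct and follows essentially the same route as the paper: the same sign argument for the lower bound, the same split according to whether the dotted edge $\l v,x\r$ is present, the same expansion of both determinants as quadratics in the dotted-edge weight with leading coefficients $-\det(S\setminus v)$ and $-\det(\l S\setminus v,y\r)$, and the same use of assumption 3 to bound $\det(S\setminus v)$ below by a constant $\delta(d,K)>0$ so that the large-weight regime is controlled. The only cosmetic difference is that you invoke Hadamard's inequality where the paper bounds each of the $(d+2)!$ terms of the determinant expansion by $1$.
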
 

\begin{proof}
The inequality  $ 0\le\det(\l S,x,y\r,y)$ follows from the fact that both $\l S,x,y \r$ and $\l S,x \r$
are hyperbolic, so their determinants are non-positive.

Suppose that the diagram  $\l S,x,y\r$ contains no dotted edges.
Then we have $|\det(\l S,x,y\r)|\le (d+2)!$ since each of the $(d+2)!$ summands in the standard expansion cannot exceed $1$.
So, $ 0<\det(\l S,x,y\r,y)\le \frac{(d+2)!}{M}$.

Now, suppose that  $\l S,x,y\r$ contains a unique dotted edge $\l x,v\r$ and denote by $\rho$ the label of this edge.
Then $$\det(\l S,x,y\r,y)=\frac{a_1 \rho^2 + b_1 \rho +c_1}{a_2 \rho^2 + b_2 \rho +c_2},$$
where $|a_i|,|b_i|,|c_i|\le (d+2)!$ Moreover, $a_2=-\det(\l S\setminus v\r)$, at the same time
 $\l S\setminus v\r$ is an elliptic diagram containing no
edges labeled by $k\ge K$, so we obtain $|a_2|\ge N(d,K)>0$ for some constant $N(d,K)$. We may assume $N(d,K)$ to be less than $1$.

Therefore, for $\rho\ge 4\frac{(d+2)!}{N(d,K)}$ we have
\begin{multline*}
\det(\l S,x,y\r,y)=\frac{\left|\frac{a_1}{a_2}+\frac{b_1}{a_2\rho}+\frac{c_1}{a_2\rho^2}\right|}{\left|1+\frac{b_2}{a_2\rho}+\frac{c_3}{a_2\rho^2}\right|}\le
\frac{\left|\frac{a_1}{a_2}\right|+\left|\frac{b_1}{a_2\rho}\right|+\left|\frac{c_1}{a_2\rho^2}\right|}{1-\left|\frac{b_2}{a_2\rho}\right|-\left|\frac{c_3}{a_2\rho^2}\right|}\le\\
\le \frac{\left|\frac{a_1}{a_2}\right|+\frac{1}{4}+\frac{N(d,K)}{8(d+2)!}}{1-\frac{1}{4}-\frac{N(d,K)}{8(d+2)!}}<
\frac{|a_1/a_2|+1/2}{1/2}\le\frac{2(d+2)!}{N(d,K)}+1
\end{multline*}
For $\rho< 4\frac{(d+2)!}{N(d,K)}$ 
we have  $$ 0<\det(\l S,x,y\r,y)\le \frac{(d+2)!+\frac{4(d+2)!^2}{N(d,K)}+\frac{16(d+2)!^3}{N(d,K)^2}}{M},$$
so $\det(\l S,x,y\r,y)$ does not exceed maximum of these two numbers for any $\rho$.
\end{proof}

%We will apply Lemma~\ref{local} to the diagrams found in Lemma~\ref{separate}.
%In particular, in 2- and 3-dimensional case we need this fact not only for diagrams containined in
% $\mathcal{C}_d(k) $. 
%On the other hand, we used the condition  $\l S,x,y\r\in \mathcal{C}_d(k) $ only to ontain the estimate 
% $|a_2|=|\det(\l S\setminus v\r)|<N(d,k)$ for some $N(d,k)>0$.
%In case of $d=2$ the diagram $\l S\setminus v\r$ is just a point, so $|a_2|=|\det(\l S\setminus v\r)|=1$.

%\begin{lemma}
%\label{local2-3}
%For $d=2$ and 3
%there exists a constant $C(d)>0$   
%such that for any diagram $\l S,x,y\r$ satisfing the following conditions 
%\begin{itemize}
%\item[1)]
%$S\in \mathcal{C}_d$ is an elliptic subdiagram of order $d$, while  $\l S,x\r$ and  $\l S,x,y\r$ 
%are hyperbolic diagrams; 
%\item[2)]
%$x$ is joined with $S$ by at most one dotted edge $xv$;
%\item[3)]
%$y$ is not joined with  $\l S,x\r$ by any dotted edge; 
%\item[4)]
% $\det(\l S,x \r)<-M<0$ where $M$ is a constant depending only on $d$;
%\item[5)]
%$\det(S\setminus v)=1$
%\end{itemize}
%holds
%$$ 0<\det(\l S,x,y\r,y)<C(d,k).$$
%
%
%\end{lemma} 
%
%\begin{proof}
%
%
%\end{proof}

\noindent
At the end of the section, we prove several elementary facts about Coxeter diagrams of polytopes belonging to ${\mathcal P}$. 

\begin{lemma}
\label{dplus2}
Let $P\in {\mathcal P_{(d,n)}}$, and let $\Sigma$ be a Coxeter diagram of $P$. Then for any elliptic subdiagram $S\subset\Sigma$ of order $d$ there exist nodes $x,y\in \Sigma\setminus S$ such that the diagram $\l S,x,y\r$ contains no dotted edges.
\end{lemma}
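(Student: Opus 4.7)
The plan is a direct counting argument driven by the defining inequality $p\le n-d-2$ of the class $\mathcal{P}$. Since $S$ is an elliptic subdiagram of order $d$, it corresponds to a vertex of $P$ and $|\Sigma\setminus S|=n-d$. No edge of an elliptic diagram is dotted, so $S$ itself carries no dotted edges. Recall that dotted edges of $\Sigma$ are in bijection with pairs of disjoint facets of $P$, so their total number equals~$p$.

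First I will bound the number of nodes of $\Sigma\setminus S$ that are joined to $S$ by a dotted edge. Let $d_S$ be the number of dotted edges between $S$ and $\Sigma\setminus S$, and let $A\subseteq\Sigma\setminus S$ consist of the nodes with no dotted edge to $S$. Every node of $\Sigma\setminus S$ missing from $A$ is incident to at least one of the $d_S$ dotted edges between $S$ and $\Sigma\setminus S$, so $|A|\ge(n-d)-d_S$.

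Next I will count the dotted edges inside $A$. The total $p$ splits as $d_S$ edges joining $S$ to $\Sigma\setminus S$ plus $d_A:=p-d_S$ edges contained in $\Sigma\setminus S$; in particular at most $d_A$ dotted edges have both endpoints in $A$. Plugging in $p\le n-d-2$ yields $|A|\ge(n-d-p)+d_A\ge 2+d_A$, hence
$$\binom{|A|}{2}\ge\binom{d_A+2}{2}=\frac{(d_A+2)(d_A+1)}{2}\ge d_A+1,$$
so the number of pairs inside $A$ strictly exceeds the number of dotted edges available to cover them. Choosing $x,y\in A$ not joined by a dotted edge completes the proof: the subdiagram $\l S,x,y\r$ has no dotted edges inside $S$, none from $x$ or $y$ to $S$ by definition of $A$, and none between $x$ and $y$ by choice.

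I do not expect a serious obstacle. The only point to verify carefully is the splitting $p=d_S+d_A$, which holds because $S$ contains no dotted edges, so the $p$ dotted edges of $\Sigma$ partition into those meeting $S$ and those lying wholly inside $\Sigma\setminus S$. The inequality $p\le n-d-2$ is then tight enough to leave a surplus of exactly two nodes beyond the obstructions, which is precisely what the statement asks for.
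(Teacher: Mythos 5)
Your proof is correct and follows essentially the same counting argument as the paper: both isolate the set of nodes of $\Sigma\setminus S$ with no dotted edge to $S$, use $p\le n-d-2$ to show it has at least two more elements than the dotted edges available among them, and conclude by pigeonhole. Your bookkeeping (splitting $p=d_S+d_A$) is a slight refinement of the paper's, which simply bounds both quantities by $p$, but the route is the same.
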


\begin{proof}
Denote by $\mathcal M$ the set of all nodes of $\Sigma\setminus S$ which are not joined with $S$ by a dotted edge.
Denote by $n_0$ the number of nodes in $\mathcal M$, and by $n_1=n-d-n_0$ the number of the remaining nodes in $\Sigma\setminus S$.
Clearly, $$n_0\ge (n-d)-p\ge (n-d)-(n-d-2)=2$$
Now suppose that any two elements of $\mathcal M$ are joined by a dotted edge. 
Then, using that $n_0\ge 2$, we may write down the following inequality for $p$:
$$p\ge \frac{n_0(n_0-1)}{2}+n_1=\frac{n_0(n_0-1)}{2}+n-d-n_0=n-d+\frac{n_0(n_0-3)}{2}\ge n-d-1$$
The contradiction shows that there are at least two nodes $x,y\in\mathcal M$ which are not joined by a dotted edge. Therefore, 
$\l S,x,y\r$ contains no dotted edges.
\end{proof}

\noindent
The same reasoning as above gives rise to the following statement.

\begin{lemma}
\label{dplus11}
Let $P\in {\mathcal P_{(d,n)}}$, and let $\Sigma$ be a Coxeter diagram of $P$. Then for any elliptic subdiagram $S\subset\Sigma$ of order $d$ and any $x\in\Sigma$ such that 
 $\l S,x\r$ contains no dotted edges there exists a node $y\in \Sigma\setminus \l S,x\r$ such that the diagram $\l S,x,y\r$ contains no dotted edges.
\end{lemma}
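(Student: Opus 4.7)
The plan is to repeat the counting argument used in the first part of the proof of Lemma~\ref{dplus2}, adapted so that the diagram $\langle S, x\rangle$ plays the role previously played by $S$. Since $\langle S,x\rangle$ already contains no dotted edges, finding a single node $y$ outside $\langle S,x\rangle$ that is not joined to $\langle S,x\rangle$ by a dotted edge is enough to guarantee that $\langle S,x,y\rangle$ contains no dotted edges either. Unlike the previous lemma, we do not need a pair of such nodes with an additional mutual condition, so a one-step pigeonhole bound suffices.

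More precisely, I would assume without loss of generality that $x\notin S$ (otherwise $\langle S,x\rangle=S$ and the statement reduces to Lemma~\ref{dplus2}), so that $|\langle S,x\rangle|=d+1$ and $\Sigma\setminus\langle S,x\rangle$ has $n-d-1$ nodes. Let $\mathcal{M}$ be the subset of nodes of $\Sigma\setminus\langle S,x\rangle$ that are not joined to $\langle S,x\rangle$ by any dotted edge, and write $n_0=|\mathcal{M}|$ and $n_1=(n-d-1)-n_0$. Every node outside $\mathcal{M}$ contributes at least one dotted edge to $\langle S,x\rangle$, and each such dotted edge corresponds to a distinct pair of disjoint facets of $P$. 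Therefore
\[
p\;\ge\; n_1 \;=\; (n-d-1)-n_0.
\]
Combined with the defining inequality $p\le n-d-2$ of the class $\mathcal{P}$, this yields
\[
n_0\;\ge\;(n-d-1)-p\;\ge\;(n-d-1)-(n-d-2)\;=\;1.
\]

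Picking any $y\in\mathcal{M}$ completes the proof: since $\langle S,x\rangle$ contains no dotted edges by hypothesis and $y$ is joined to $\langle S,x\rangle$ by no dotted edge by the definition of $\mathcal{M}$, the subdiagram $\langle S,x,y\rangle$ contains no dotted edges at all. There is essentially no obstacle here beyond carefully tracking the cardinality $n-d-1$ (rather than $n-d$ as in Lemma~\ref{dplus2}); the condition $p\le n-d-2$ is tight enough to force $n_0\ge 1$, which is exactly what is needed.
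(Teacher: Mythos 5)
Your proof is correct and is essentially the argument the paper intends: the paper gives no separate proof, remarking only that ``the same reasoning as above'' (i.e.\ the counting in Lemma~\ref{dplus2}) applies, and your adaptation --- counting the $n-d-1$ nodes outside $\l S,x\r$, bounding the number joined to $\l S,x\r$ by a dotted edge by $p\le n-d-2$, and concluding $n_0\ge 1$ --- is exactly that reasoning, correctly simplified since only one node $y$ is needed and no condition between pairs of nodes of $\mathcal M$ is required.
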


\begin{lemma}
\label{dots}
Suppose that $P\in {\mathcal P_{(d,n)}}$ 
and $S_0\subset \Sigma(P)$ is an elliptic diagram of the type $G_2^{(k)}$, $k\ge 6$.
If each neighbor of $S_0$ is joined with $\o S_0$ by at least one dotted edge then 
\begin{itemize}
\item[1)] $P(S_0)\in {\mathcal P}_{(d',n')} $ 
(where $d'=d-2$ and $n'$  are the dimension of $P(S_0)$ 
%the number of dotted edges in $\Sigma_{S_0}$  
and the number of nodes  in $\Sigma_{S_0}$ respectively);

\item[2)] For any elliptic subdiagram $S\subset \o S_0$ of order $d-2$ there exists a node $x\in \o S_0\setminus S$
which is not joined with $S$ by a dotted edge. Moreover, there exists a neighbor $y$ of $S_0$ such that the diagram $\l S,x,y\r$ contains no dotted edges.
\end{itemize}
\end{lemma}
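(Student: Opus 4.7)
The plan is to prove both parts by counting dotted edges in $\Sigma(P)$. For part~(1), Proposition~\ref{cor_All} says that $S_0=G_2^{(k)}$ with $k\ge 6$ admits no good neighbor, so the nodes of $\Sigma(P)$ partition as $S_0\sqcup\o S_0\sqcup B$ with $|\o S_0|=n'$ and $|B|=n-n'-2$, and moreover $\o S_0=\Sigma_{S_0}$. In particular the number of dotted edges inside $\o S_0$ equals $p'$, the number of disjoint pairs of facets of $P(S_0)$. The remaining dotted edges of $\Sigma(P)$ lie inside $B$, between $S_0$ and $B$, or between $\o S_0$ and $B$, and the hypothesis on neighbors of $S_0$ contributes at least $|B|$ dotted edges of the last type. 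Hence
\[
p'+(n-n'-2)\le p\le n-d-2,
\]
which simplifies to $p'\le n'-d'-2$ and gives $P(S_0)\in\mathcal{P}_{(d',n')}$.

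The first assertion of~(2) then follows by applying Lemma~\ref{dplus2} to $P(S_0)\in\mathcal{P}_{(d',n')}$ with the elliptic subdiagram $S\subset\o S_0=\Sigma_{S_0}$ of maximal order $d'=d-2=\dim P(S_0)$: the lemma produces two distinct nodes in $\o S_0\setminus S$ whose union with $S$ has no dotted edges, and either one serves as the required $x$.

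For the moreover-clause, observe that $\l S_0,S\r$ is elliptic of order $d$ in $\Sigma(P)$, being the disjoint union of the elliptic subdiagrams $S_0$ and $S$ (since $S\subset\o S_0$ has no edges to $S_0$). I plan to repeat the counting argument from the proof of Lemma~\ref{dplus2} with $S^*=\l S_0,S\r$, but bipartitely with respect to the splitting $\Sigma(P)\setminus S^*=T\sqcup B$, where $T:=\o S_0\setminus S$. Set
\[
\mathcal{M}_T := \{x\in T : x \text{ is not joined to } S \text{ by a dotted edge}\},
\]
\[
\mathcal{M}_B := \{y\in B : y \text{ is not joined to } S_0\cup S \text{ by a dotted edge}\}.
\]
If every pair in $\mathcal{M}_T\times\mathcal{M}_B$ were joined by a dotted edge, the usual bound $p\ge n_1^*+|\mathcal{M}_T|\,|\mathcal{M}_B|$, with $n_1^*=(n-d)-|\mathcal{M}_T|-|\mathcal{M}_B|$, combined with $p\le n-d-2$, would give $(|\mathcal{M}_T|-1)(|\mathcal{M}_B|-1)\le -1$, so one of $\mathcal{M}_T,\mathcal{M}_B$ would be empty. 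The bound $p'\le n'-d$ from part~(1) gives $|\mathcal{M}_T|\ge(n'-d+2)-p'\ge 2$, ruling out $\mathcal{M}_T=\emptyset$.

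The hard step is to exclude $\mathcal{M}_B=\emptyset$, i.e.\ the scenario in which every $y\in B$ has a dotted edge into $S_0\cup S$. Combined with the hypothesis of the lemma this forces each $y\in B$ either to be dotted to $S$ or to be simultaneously dotted to $S_0$ and to some element of $T$; counting these forced contributions against $p\le n-d-2$, and using $p'\le n'-d$ from part~(1), I expect to produce an arithmetic over-count and a contradiction. Once both $\mathcal{M}_T$ and $\mathcal{M}_B$ are non-empty the bipartite count above yields a pair $(x,y)\in\mathcal{M}_T\times\mathcal{M}_B$ not joined by a dotted edge, and $\l S,x,y\r$ then contains no dotted edges at all.
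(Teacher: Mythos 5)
Part (1) and the first assertion of part (2) are correct and essentially the paper's argument: you count the $b=|B|$ dotted edges forced by the hypothesis to run from the neighbors of $S_0$ into $\o S_0$, deduce $p'\le p-b\le n'-d'-2$, and then invoke Lemma~\ref{dplus2} for $P(S_0)$. The problem is the moreover-clause, where your proof is not complete: you explicitly defer ``the hard step'' of excluding $\mathcal{M}_B=\emptyset$, and the counting you sketch for it does not close. If every neighbor $y\in B$ has a dotted edge going directly into $S$, then that edge is precisely the one dotted edge per neighbor already consumed in the proof of part (1), so the budget $p\le n-d-2$ yields nothing beyond $|\mathcal{M}_T|\ge 2$; writing out your ``forced contributions'' gives only $|\mathcal{M}_T|\ge 2+b_2$ (where $b_2$ counts neighbors dotted to $S_0$ and to $T$ but not to $S$), which is no contradiction when $b_2=0$. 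So the bipartite scheme as you set it up --- requiring $y$ to avoid dotted edges to all of $S_0\cup S$ --- cannot be completed by edge-counting alone, and this is exactly the step the whole lemma hinges on.

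The paper negates a weaker statement. It keeps only your $\mathcal{M}_T$ (called $\mathcal M$, of size $n_0\ge 2$) and supposes that \emph{every} neighbor of $S_0$ is joined to \emph{every} node of $\mathcal M$ by a dotted edge; this gives $bn_0$ dotted edges outside $\o S_0$ in addition to the $n_1=n'-d'-n_0$ dotted edges from $\o S_0\setminus(S\cup\mathcal M)$ into $S$, and the inequality $bn_0\ge b+n_0-1$ yields $p\ge bn_0+n_1\ge n-d-1>n-d-2$, a contradiction. This produces a neighbor $y$ and a node $x\in\mathcal M$ with no dotted edge between them. Note that this count controls only the edges inside $\l S,x\r$ and the edge $\l x,y\r$, not $\l S,y\r$ --- i.e., the condition you were (rightly) trying to secure for $y$ is not what the paper's count delivers either --- but the paper's deduction of its intermediate claim is complete, whereas your version leaves the decisive exclusion as a conjecture that, in the form you propose, fails.
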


\begin{proof}
Denote by $b$ the number of neighbors of $S_0$. Denote by $p'$ the number of dotted edges in $\Sigma_{S_0}$.
 By assumption, each neighbor of $S_0$ is joined with $\o S_0$ by at least one dotted edge, so there are at least $b$ dotted edges ``outside'' of $\o S_0$. Taking into account that $p\le n- (d+2)$, we have $$p'\le p-b\le n-(d+2)-b=n-d'-4-b$$
On the other hand, $n'=n-|S_0|-b=n-2-b$, so $n=n'+b+2$. Thus, we get $$p'\le n'+b+2-d'-4-b=n'-(d'+2),$$ and the first statement is proved.

The proof of the second statement is similar to the proof of Lemma~\ref{dplus2}. 
Denote by $\mathcal M$ the set of all nodes of $\o {S_0}\setminus S$ which are not joined with $S$ by a dotted edge.
Denote $n_0=|\mathcal M|$, and denote by $n_1=n'-d'-n_0$ the number of the remaining nodes in $\o {S_0}\setminus S$.
By Lemma~\ref{dplus2}, $n_0\ge 2$.
Now suppose that each neighbor of $S_0$ is joined with each node of $\mathcal M$ by a dotted edge.
Then there are $bn_0$ dotted edges joining the neighbors of $S_0$ with the nodes of $\mathcal M$.
Therefore, $$p\ge bn_0+n_1\ge b+n_0-1+n_1=n-d-1>n-d-2$$ in contradiction to the assumption. 
Thus, there exist a neighbor $y$ of $S_0$ and $x\in{\mathcal M}$ which are not joined by a dotted edge, so the lemma is proved.  
\end{proof}

%Suppose that it is already shown that any polytope from  ${\mathcal P}(d)$
%contains no edges of multiplicity greater than some constant $K(d)$.
%Then the following corrollary follows immediately from Lemma~\ref{dots}.   

%\begin{cor}
%\label{k_alg}
%Let $P\in {\mathcal P}(d+2,n)$ 
%and let $S_0\subset \Sigma(P)$ be an elliptic diagram of the type $G_2^{(k)}$, $k\ge 6$.
%Let $S\subset \o S_0$ be any elliptic subdiagram od order $d$ and $z$ be any node of 
%$\o S_0\setminus S$.   

%Then either there exists a neighbor $y$ of $S_0$  joined with $\l S,z\r$
%without dotted edges, or there exists a node $x\in \o S_0\setminus S$
%and  a neighbor $y$ of $S_0$ such that the diagram $\l S,x,y\r$ contains no dotted edges
%and a multiplicity of any edge of $\l S,x \r$ is bounded by $K(d)$.

%\end{cor}

\section{Upper bound for multiplicity of edge}
\label{multiplicity}
In this section we prove that for each $d$ there is a lower bound for dihedral angles of polytopes from $\mathcal{P}_d$.
We start from low-dimensional case.

\begin{lemma}
\label{no_multi}
Let $P$ be a Coxeter $d$-polytope, and let
 $S_0\subset \Sigma(P)$ be a subdiagram of the type $G_2^{(k)}$, $k\ge 6$.
Suppose that either $d=4$, or $d=5$, or 
 $\o S_0$ contains no multi-multiple edges.

%
%at least one elliptic subdiagram $S$ of order $d-2$
%containing no multi-multiple edges and different from $(d-2)A_1$.

Then there exists a constant $K_1(d)$ depending on $d$ only such that $\Sigma(P)\in\mathcal{C}_{d,K_1(d)}$ (i.e., $k\le K_1(d)$).
\end{lemma}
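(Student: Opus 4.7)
The plan is a proof by contradiction via local determinants: if $k$ could be arbitrarily large, a rank-deficiency argument combined with Lemma~\ref{local} applied to the codim-$2$ face $P(S_0)$ would fail.

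Since $k\ge 6$, $S_0=G_2^{(k)}$ has no good neighbor in $\Sigma(P)$, so by Theorem~\ref{bor} and Proposition~\ref{cor_All} the face $P(S_0)$ is a Coxeter polytope of dimension $d-2$ with Coxeter diagram $\bar S_0=\Sigma_{S_0}$, and every node of $\bar S_0$ is disjoint from $S_0$ inside $\Sigma(P)$. Under the assumptions of the lemma, $\bar S_0\in\mathcal{C}_{d-2,K^{\ast}}$ for a constant $K^{\ast}=K^{\ast}(d)$: $K^{\ast}=\infty$ for $d\in\{4,5\}$ (using the uniform low-dim bound in Lemma~\ref{separate}), and $K^{\ast}=5$ for $d\ge 6$ (using the no-multi-multiple-edge assumption). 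Applying Lemma~\ref{separate} to $\bar S_0$ produces an elliptic $S\subset\bar S_0$ of order $d-2$ and a node $x\in\bar S_0$ joined to $S$ by at most one dotted edge, satisfying $\det(\langle S,x\rangle)\le -M(d)<0$; in the low-dim cases we keep the extra structural property that for some $v\in S$, $S\setminus v$ is edge-free and $\langle S,x\rangle\setminus v$ is dotted-edge-free.

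Because $\Sigma(P)$ is connected and $S_0\cap\bar S_0=\emptyset$, a bad neighbor $y$ of $S_0$ must exist; for such $y$, $\langle S_0,y\rangle$ is hyperbolic (not elliptic by definition; not parabolic as $\Sigma(P)\in\mathcal{C}_d$). As $\langle S,x\rangle$ is hyperbolic too, the ambient signature $(d,1)$ forbids $\langle S_0,y\rangle\sqcup\langle S,x\rangle$ from sitting as a disjoint union in $\Sigma(P)$, so $y$ must be joined to $\langle S,x\rangle$. By varying the choice of $(S,x)$ and $y$ within the above flexibility we may further arrange that no dotted edge runs from $y$ to $\langle S,x\rangle$. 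Now set $\Sigma''=\langle S_0,S,x,y\rangle$ of order $d+2$; as $\dim\R^{d,1}=d+1$, $\det(\Sigma'')=0$. Since $S_0$ and $\langle S,x\rangle$ are disjoint in $\Sigma(P)$, $\Gr(\langle S_0,S,x\rangle)=\Gr(S_0)\oplus\Gr(\langle S,x\rangle)$ is block diagonal, so Schur-complementing along the $\{y\}$ block turns $\det(\Sigma'')=0$ into
\[
v_1^{\,T}\Gr(S_0)^{-1}v_1+v_2^{\,T}\Gr(\langle S,x\rangle)^{-1}v_2=1,
\]
where $v_1,v_2$ collect the entries of the $y$-row of $\Gr(\Sigma'')$ restricted to $S_0$, resp.\ $\langle S,x\rangle$. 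A direct inversion of the $2\times 2$ matrix $\Gr(S_0)$ yields
\[
v_1^{\,T}\Gr(S_0)^{-1}v_1=\frac{w_1^2+w_2^2+2w_1w_2\cos(\pi/k)}{\sin^2(\pi/k)},
\]
with $w_i=-\Gr_{y,u_i}$.

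As $y$ is joined to $S_0$, at least one weight $w_i$ is $\ge\cos(\pi/3)=1/2$ (or $>1$ in the dotted case), so the numerator is bounded below by $1/4$ uniformly in $k$. The arrangements above make $(S,x,y)$ meet all hypotheses of Lemma~\ref{local} with ``dimension'' $d-2$, label bound $K^{\ast}$ and determinant bound $M(d)$; that lemma gives
\[
v_1^{\,T}\Gr(S_0)^{-1}v_1=1-v_2^{\,T}\Gr(\langle S,x\rangle)^{-1}v_2=\det(\langle S,x,y\rangle,y)\le C(d).
\]
Combining the lower and upper bounds, $\sin^2(\pi/k)\ge 1/(4C(d))$, i.e.\ $k\le K_1(d)$ for a constant depending only on $d$. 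The delicate point is the arrangement that $y$ has no dotted edge to $\langle S,x\rangle$ (condition~(4) of Lemma~\ref{local}) and that labels in $S\setminus v$ are bounded by $K^{\ast}$ (condition~(3)); both are precisely where the case split of the hypothesis enters: the low-dim refinement of Lemma~\ref{separate} handles $d\in\{4,5\}$, and the no-multi-multiple-edge assumption does the rest for $d\ge 6$.
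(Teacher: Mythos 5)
Your overall strategy matches the paper's: pass to the codimension-$2$ face $P(S_0)$, extract a hyperbolic diagram $\langle S,x\rangle\subset\overline S_0$ with $\det(\langle S,x\rangle)\le -M<0$ via Lemma~\ref{separate}, adjoin a neighbor $y$ of $S_0$, and exploit $\det(\langle S_0,S,x,y\rangle)=0$ together with the bound of Lemma~\ref{local}. Your Schur-complement identity is exactly Proposition~\ref{loc_sum} written in coordinates, and the explicit lower bound $w_1^2+w_2^2+2w_1w_2\cos(\pi/k)\ge 1/4$ correctly quantifies the paper's remark that $\det(\langle S_0,y\rangle,y)\to-\infty$ as $k\to\infty$. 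That part is fine.

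The gap is the sentence ``by varying the choice of $(S,x)$ and $y$ within the above flexibility we may further arrange that no dotted edge runs from $y$ to $\langle S,x\rangle$.'' This is condition (4) of Lemma~\ref{local}, it is the crux of the whole lemma, and it is not free. The signature argument you give just before it only shows that $y$ must be joined to $\langle S,x\rangle$ by \emph{some} edge, which does nothing toward excluding a dotted one (if anything it points the other way). It can perfectly well happen that \emph{every} neighbor of $S_0$ is joined to the particular $\langle S,x\rangle$ produced by Lemma~\ref{separate} by a dotted edge. The paper treats this as a separate case: it invokes Lemma~\ref{dots}, whose proof is a counting argument resting on the hypothesis $p\le n-d-2$ (so the argument really lives on $P\in\mathcal P$; your proof never uses this hypothesis, which is itself a warning sign), to produce a new node $x'\in\overline S_0\setminus S$ and a neighbor $y'$ of $S_0$ such that $\langle S,x',y'\rangle$ contains no dotted edges at all. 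Moreover, once $x$ is replaced by $x'$ the determinant bound furnished by Lemma~\ref{separate} for the original $x$ is lost; the paper recovers a bound $\det(\langle S,x'\rangle)\le -M_1(d)$ from Lemma~\ref{separate1}, which is applicable precisely because $\langle S,x'\rangle$ now has no dotted edges and $S$ satisfies the order/multiplicity hypotheses coming from the case split in the statement. Without this second branch your argument does not close.
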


\begin{proof}
Consider the diagram $\o S_0$. By assumptions, we may apply Lemma~\ref{separate} with $K=5$ to $P(S_0)$. Thus, there exists an elliptic subdiagram $S\subset \o S_0$ of order $d-2$ and a node $x \in \o S_0$ such that $\det(\l S,x\r)<-M_0<0$ for some constant $M_0>0$ depending only on $d-2$, and $\l S,x\r$ contains at most one dotted edge.
%
%Let $v$ be a node of $S$ joined with at least one other node of $S$.
%Denote by $x$ a good neighbor of $S\setminus v$ contained in $\o S_0$.  
%Clearly, $x$ is not joined by dotted edges with  $S\setminus v$, 
%so by Lemma~\ref{separate1} there exists a constant $M>0$ such that $\det(\l S,x \r)< -M$.

Suppose that there exists a neighbor $y$ of $S_0$ such that $y$ is not joined with $\l S,x\r$ 
by a dotted edge. Then the diagram $\l S,x,y\r$ fits into all the conditions of Lemma~\ref{local} (with $K=5$ and $M=M_0$), so there exists a constant $C$
(depending on $d$ only) such that $0<\det((\l S,x,y \r,y)< C$. 
By Proposition~\ref{loc_sum}, $$ \det(\l S_0,S,x,y \r,y)=\det(\l S_0,y\r,y)+\det(\l S,x,y \r,y)-1.$$
Since the diagram $\l S_0,S,x,y \r$ contains $d+2$ nodes, $\det(\l S_0,S,x,y \r)=0$ 
and hence, $ \det(\l S_0,S,x,y \r,y)=0$ and  
$$\det(\l S_0,y\r,y)=1-\det(\l S,x,y \r,y)>1-C.$$
Since $\det(\l S_0,y\r,y)$ tends to (negative) infinity
while $k$ tends to infinity, we obtain some constant $K_1(d)$ such that  $k\le K_1(d)$.

Suppose now that each neighbor of $S_0$ is joined with  $\l S,x\r$  by a dotted edge.
Then we are in assumptions of Lemma~\ref{dots},
so there exists a node $x'\in \mathcal \o S_0\setminus S$ and a neighbor $y'$ of $S_0$ such that $\l S,x',y'\r$
contains no dotted edges. Now we can apply Lemma~\ref{separate1} to $\l S,x'\r$, which implies that there exists a constant $M_1>0$ depending on $d$ only such that $\det(\l S,x' \r)< -M_1$. Therefore, $\l S,x',y'\r$ fits into assumptions of Lemma~\ref{local} with $K=5$ and $M=M_1$, so we can proceed with the diagram  $\l S,x',y'\r$ in the same way as we did in the preceding paragraph with  $\l S,x,y\r$. This finishes the proof.
\end{proof}

\noindent
Now we are able to prove the main result of the section. The following theorem provides a uniform upper bound for multiplicity of edges of Coxeter diagrams $\Sigma(P)$ for $P\in\mathcal{P}_d$.

\begin{theorem}
\label{multi}
If $d\ge 4$,
then there exists a constant $K_0(d)$ such that  
for any $P\in\mathcal{P}_d$ the Coxeter diagram $\Sigma(P)$ belongs to $\mathcal{C}_{d,K_0(d)}$.
\end{theorem}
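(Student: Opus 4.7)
The plan is to prove Theorem~\ref{multi} by induction on $d \geq 4$. The base cases $d = 4$ and $d = 5$ are immediate from Lemma~\ref{no_multi}, which applies with no restriction on $\bar{S_0}$, so I set $K_0(d) = K_1(d)$ there. For the inductive step with $d \geq 6$, assume $K_0(d-2)$ has been constructed. Given $P \in \mathcal{P}_d$ and a multi-multiple edge $S_0 = G_2^{(k)}$ in $\Sigma(P)$, the goal is to bound $k$ in terms of $d$ alone. By Theorem~\ref{bor} and Proposition~\ref{cor_All}, $P(S_0)$ is a compact hyperbolic Coxeter $(d-2)$-polytope with diagram $\bar{S_0} = \Sigma_{S_0}$, and since $S_0$ has no good neighbors, $\bar{S_0}$ is entirely disjoint from $S_0$ inside $\Sigma(P)$.

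If $\bar{S_0}$ contains no multi-multiple edges, Lemma~\ref{no_multi} gives $k \leq K_1(d)$ at once. Otherwise I wish to invoke the induction hypothesis on $P(S_0)$, which requires $P(S_0) \in \mathcal{P}_{d-2}$. Assume first that every neighbor of $S_0$ is joined with $\bar{S_0}$ by at least one dotted edge; then Lemma~\ref{dots}(1) gives $P(S_0) \in \mathcal{P}_{d-2}$, and by induction $\bar{S_0} \in \mathcal{C}_{d-2, K_0(d-2)}$. Now I rerun the proof of Lemma~\ref{no_multi} with $K = K_0(d-2)$ in place of $K = 5$: Lemma~\ref{separate} applied to $P(S_0)$ produces an elliptic subdiagram $S \subset \bar{S_0}$ of order $d-2$ and a node $x \in \bar{S_0}$ with $\det(\langle S, x\rangle) \leq -M_0$ (where $M_0 = M(d-2, K_0(d-2))$) and at most one dotted edge between $x$ and $S$. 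Then I split into the two subcases of Lemma~\ref{no_multi}: either some neighbor $y$ of $S_0$ avoids the dotted edges of $\langle S, x\rangle$, in which case Lemma~\ref{local} bounds $\det(\langle S, x, y\rangle, y)$; or all neighbors meet $\langle S, x\rangle$ by dotted edges, in which case Lemma~\ref{dots}(2) supplies $x' \in \bar{S_0}\setminus S$ and a neighbor $y'$ of $S_0$ with $\langle S, x', y'\rangle$ free of dotted edges, and the bound $\det(\langle S, x'\rangle) \leq -M_1'$ follows either from Lemma~\ref{separate1} (when $S$ has no multi-multiple $G_2$-component) or from a direct finiteness count among the $(d-1)$-node dotted-edge-free diagrams in $\mathcal{C}_{d-2, K_0(d-2)}$. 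In either subcase, Proposition~\ref{loc_sum} applied at the common vertex $y$ (or $y'$) to the diagram $\langle S_0, S, x, y\rangle$, whose determinant vanishes since it has $d + 2$ nodes while $\Sigma(P)$ has rank $d + 1$, combined with the disjointness of $S_0$ and $\langle S, x\rangle$, yields $\det(\langle S_0, y\rangle, y) \geq 1 - C_0$ for a constant $C_0$ depending only on $d$. Since $\det(\langle S_0, y\rangle, y) \to -\infty$ as $k \to \infty$, $k$ is bounded.

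The remaining case is when the hypothesis of Lemma~\ref{dots} fails, i.e., some neighbor $y_0$ of $S_0$ is not joined with $\bar{S_0}$ by any dotted edge. I expect this to be the main obstacle of the proof, since $P(S_0)$ need not lie in $\mathcal{P}_{d-2}$ and hence the induction hypothesis is not directly available. On the other hand, $y_0$ automatically satisfies condition (4) of Lemma~\ref{local} against every subdiagram $\langle S, x\rangle \subset \bar{S_0}$, so the task reduces to exhibiting some $S, x \subset \bar{S_0}$ with $\det(\langle S, x\rangle)$ bounded away from zero by a constant depending only on $d$; the loc-sum closing computation then goes through verbatim with $y = y_0$. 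The plan here is to exploit the very existence of the free neighbor $y_0$, together with $P \in \mathcal{P}_d$, in a combinatorial counting argument modelled on the proof of Lemma~\ref{dots}: the presence of $y_0$ forces additional dotted edges elsewhere in $\Sigma(P)$ to remain within a controlled range, and this restriction should be enough to locate a dotted-edge-free subdiagram $\langle S, x\rangle$ in $\bar{S_0}$ on which Lemma~\ref{separate1} supplies the required determinant bound, completing the induction.
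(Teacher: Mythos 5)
Your base case and the branch of the inductive step where Lemma~\ref{dots} applies are sound and essentially match the paper: there $P(S_0)\in\mathcal{P}_{d-2}$, the induction hypothesis bounds all labels of $\o S_0$ by $K_0(d-2)$, and a finiteness count (or Lemma~\ref{separate}) gives the determinant bound needed for Lemma~\ref{local}. The genuine gap is in your ``remaining case,'' which you correctly identify as the main obstacle but do not actually close. When some neighbor $y_0$ of $S_0$ is not joined to $\o S_0$ by a dotted edge, the induction hypothesis is unavailable, so the labels of multi-multiple edges \emph{inside} $\o S_0$ are uncontrolled. Your plan is to locate a dotted-edge-free $\l S,x\r\subset\o S_0$ and invoke Lemma~\ref{separate1}; but for $d\ge 6$ that lemma requires the order-$(d-2)$ diagram $S$ to contain no multi-multiple edges (hypothesis 2), and nothing guarantees such an $S$ exists --- every elliptic subdiagram of order $d-2$ in $\o S_0$ may contain a multi-multiple edge of unbounded label. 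Your proposed counting argument constrains only dotted edges, which is irrelevant to this obstruction: the danger is that $\det(\l S,x\r)$ is not uniformly bounded away from $0$ as the internal labels of $S$ grow.

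The paper's resolution, which is the key idea missing from your proposal, is to avoid bounding $\det(\l S,x\r)$ altogether. Write $S=\l S_1,\dots,S_r,R'\r$ where each $S_i$ is a multi-multiple edge, pass to the iterated face $T=\o{T_0}$ with $T_0=\l S_0,S_1,\dots,S_r\r$, a diagram of a $(d-2r-2)$-polytope that either has dimension $\le 3$ or contains no multi-multiple edges; Lemma~\ref{separate} with $K=5$ then yields $R\subset T$ and $x$ with $\det(\l R,x\r)<-M_0$ for $M_0$ depending only on $d$. The additivity of local determinants (Proposition~\ref{loc_sum}) gives
\begin{equation*}
\det(\l S,x,y\r,y)=\sum_{i=1}^{r}\det(\l S_i,y\r,y)+\det(\l R,x,y\r,y)-r\le \det(\l R,x,y\r,y),
\end{equation*}
since each $\det(\l S_i,y\r,y)-1\le 0$; the right-hand side is bounded by Lemma~\ref{local} applied to $\l R,x,y\r$, and the closing computation with $\det(\l S_0,S,x,y\r)=0$ proceeds as you describe. (The paper's case split is actually on whether some neighbor of $S_0$ avoids dotted edges to the small diagram $\l R,x\r$, with the Lemma~\ref{dots} branch as the complementary case.) Without this decomposition-and-face trick, or an equivalent device, your second case does not go through.
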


\begin{proof}
The proof is by induction on $d$. The  base consists of the cases $d=4$ and $d=5$,
for which we refer to Lemma~\ref{no_multi}.

Suppose that $d\ge 6$ and for each $4\le d'<d$ the theorem is already proved, i.e.   
for any $d'$ there is a constant $K_0(d')$ such that for any $P'\in\mathcal{P}_{d'}$ and for any $S_0\subset\Sigma(P')$ of type $G_2^{(k)}$ we have $k\le K_0(d')$.

Now take any subdiagram $S_0\subset \Sigma(P)$ of type $G_2^{(k)}$ and consider the diagram $\o S_0$. 
We may assume that $k\ge 6$, in particular $\Sigma_{S_0}=\o S_0$.
If $\o S_0$ contains no multi-multiple edges, we refer to Lemma~\ref{no_multi}.
Suppose that $\o S_0$ contains at least one multi-multiple edge. Then choose any elliptic subdiagram $S\subset \o S_0$ of order $d-2$
containing a multi-multiple edge.
We can write $S$ as $\l S_1,\dots,S_r,R' \r$, $r\ge 1$, where each of $S_1,\dots,S_r$ is a subdiagram of order $2$ consisting of a multi-multiple
edge, and $R'$ is either a subdiagram containing no multi-multiple edges or a subdiagram of order $2$ or $3$ (such a decomposition of $S$ may not be unique). 

Denote $T_0=\l S_0,S_1,\dots,S_r\r$, and consider $T= \o T_0\subset\o S_0$ (so, $T$ consists of all nodes of $\o S_0$ not joined with 
$\l S_0,S_1,\dots,S_r\r$). This is a Coxeter diagram of a $(d-2r-2)$-polytope, where $d-2r-2\ge 2$. We may assume that either $d-2r-2\le 3$
or $T$ contains no multi-multiple edges, otherwise we may add a multi-multiple edge (denote it by $S_{r+1}$) to $T_0$ to get $T_0'$, and consider $T'=\o {T_0'}$.  
By Lemma~\ref{separate}, there exists a constant $M_0>0$ (depending on $d$ only), an elliptic subdiagram $R\subset T$ of order $(d-2r-2)$ and a node $x\in T\setminus R$ 
such that $\det(\l x,R\r)<-M_0$.  
  
First, suppose that there exists a neighbor $y$ of $S_0$ not joined with $\l x,R\r$ by a dotted edge. 
Then, applying Proposition~\ref{loc_sum}, we get  $$\det(\l S,x,y \r,y)\!=\!\det(\l S_1,y \r,y)+\cdots+\det(\l S_r,y \r,y)+\det(\l R,x,y \r,y)-r$$
Notice that $\det(\l S,x,y \r,y)\ge 0$ while each of $\det(\l S_i,y \r,y)$ either equals to 1 (if $S_i$ is not joined with $y$) or is negative (otherwise). Therefore, $\det(\l S_i,y \r,y)-1\le 0$ for $i=1,\dots,r$, and 
we obtain $$0\le\det(\l S,x,y \r,y)\le \det(\l R,x,y \r,y).$$
At the same time, by Lemma~\ref{local} applied to $\l R,x,y \r$ with $K=5$ and $M=M_0$, the local determinant $\det(\l R,x,y \r,y)$ is bounded by some constant $C$ (depending on $d$ only). 

Now consider the diagram $\l S_0,S,x,y \r$. By construction, $\l S,x\r\subset\o S_0$, so $\l S_0,S,x\r$ is not connected, and we use Proposition~\ref{loc_sum} to obtain 
 $$\det(\l S_0,S,x,y \r,y)=\det(\l S_0,y\r,y)+\det(\l S,x,y\r,y)-1.$$
Since the diagram $\l S_0,S,x,y \r$ consists of $d+2$ nodes, we get $$0=\det(\l S_0,S,x,y \r)=\det(\l S_0,S,x,y \r,y),$$
which implies 
$$\det(\l S_0,y\r,y)=1-\det(\l S,x,y \r,y)>1-C.$$
Since $\det(\l S_0,y\r,y)$ tends to (negative) infinity
while $k$ tends to infinity, we obtain some constant $K_2(d)$ such that  $k\le K_2(d)$.

We are left to consider the case when each neighbor of $S_0$ is joined with   $\l R,x\r$ by a dotted edge,
suppose now that this holds.
Let $S$ be any elliptic subdiagram of $\o S_0$. According to the second statement of Lemma~\ref{dots}, there exists a node $x'\subset \o S_0$ and a neighbor $y'$ of $ S_0$ such that the diagram $\l S,x',y'\r$ contains no dotted edges. 
By the first statement of Lemma~\ref{dots}, we have $P(S_0)\in\mathcal{P}_{d-2}$. We may apply the induction assumption to get $\Sigma_{S_0}\in\mathcal{C}_{d-2,K_0(d-2)}$. In particular, the subdiagram
 $\l S,x'\r$ contains no edges labeled by $k'>K_0(d-2)$.
So, by Lemma~\ref{separate}, $-M_1 \le\det \l S,x'\r<0$ for some $M_1>0$ depending on $d-2$ only, and by Lemma~\ref{local}
there exists a constant $C_1>0$ such that  $0\le\det(\l y,x,S \r,y)\le C_1$.
This implies that $$\det(\l S_0,y\r,y)=1-\det(\l S,x,y \r,y)>1-C_1$$
and hence, we obtain some constant $K_3(d)$ such that  $k\le K_3(d)$. 

Now, taking as $K_0(d)$ the maximum of $K_2(d)$ and $K_3(d)$, we complete the proof.  
\end{proof}

\section{Finiteness of the number of polytopes}
\label{fin}

In the previous section we proved that multiplicity of edges of Coxeter diagrams of polytopes from $\mathcal{P}_d$ is bounded. The current goal is to prove that the number of facets of polytopes from $\mathcal{P}_d$ is also bounded by some constant depending on $d$ only. This will imply finiteness of $\mathcal{P}_d$.  

First, we estimate the order of subdiagrams without dotted edges.

\begin{lemma}
\label{n_0}
There exists a constant $Q_0(d,k)$ such that for any diagram $\Sigma'$ satisfying 
\begin{itemize}
\item[1)]
$\Sigma'\subset \Sigma$ for some diagram $\Sigma\in  \mathcal{C}_{d,k}$,
\item[2)]
$\Sigma'$ contains an elliptic subdiagram $S$ of order $d$,
\item[3)]
$\Sigma'$  contains no dotted edges,
\end{itemize}
the inequality
$|\Sigma'|\le Q_0(d,k)$ holds.

%In particular, the number of diagrams $\l S,x\r\subset  \mathcal{C}_{d,k}$ without dotted edges is finite.
\end{lemma}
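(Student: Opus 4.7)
The plan is to use the Lorentzian realization of the polytope underlying $\Sigma$ to show that the map sending $x\in\Sigma'\setminus S$ to its tuple of edge labels with $S$ is injective, and then to count the finitely many possible tuples allowed by $\Sigma\in\mathcal{C}_{d,k}$.

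First I would fix the standard realization. By Vinberg's theorem recalled in Section~\ref{cox}, $\Sigma$ has signature $(d,1)$ and no parabolic subdiagrams, so one may realize the outer unit normals $n_v$ inside $\R^{d,1}$. The normals $n_{v_1},\dots,n_{v_d}$ of $S$ span a positive-definite $d$-dimensional subspace $V_S$, and $V_S^\perp$ is a negative-definite line; fix $e\in V_S^\perp$ with $\l e,e\r=-1$. For $x\in\Sigma'\setminus S$, decompose $n_x=w_x+\tau_x e$ with $w_x\in V_S$ and $\tau_x\in\R$. Since $e\perp V_S$, the component $w_x$ is determined entirely by the column $b_x$ of the off-diagonal block of $\Gr(\Sigma')$ recording the edges from $x$ to $S$, because $\l n_x,n_{v_i}\r=\l w_x,n_{v_i}\r$; and the unit-norm condition yields $\tau_x^2=\l w_x,w_x\r-1$.

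Next I would prove injectivity of $x\mapsto b_x$. First $\tau_x\ne 0$: otherwise $n_x\in V_S$, so $\Gr(\l S,x\r)$ is positive semidefinite of rank $d$ on $d+1$ vectors, hence contains a parabolic component, contradicting Vinberg. Now if $x\ne y$ in $\Sigma'\setminus S$ satisfied $b_x=b_y$, then $w_x=w_y$ and $\tau_y=\pm\tau_x$. The choice $\tau_y=\tau_x$ gives $n_x=n_y$, impossible for distinct facets; the choice $\tau_y=-\tau_x$ yields $\l n_x,n_y\r=\l w_x,w_x\r+\tau_x^2=1+2\tau_x^2>1$. But the absence of dotted edges in $\Sigma'$, together with the no-bold-edge property of compact Coxeter diagrams, forces every off-diagonal entry of $\Gr(\Sigma')$ into $(-1,0]$, contradicting $\l n_x,n_y\r>1$.

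Finally I would count the possible attachments: each of the $d$ entries of $b_x$ is either $0$ (orthogonal case) or $-\cos(\pi/m)$ with $3\le m\le k$ (using $\Sigma\in\mathcal{C}_{d,k}$ and the no-dotted hypothesis), giving at most $(k-1)^d$ possibilities for $b_x$. Hence $|\Sigma'\setminus S|\le (k-1)^d$, and one may take $Q_0(d,k):=d+(k-1)^d$. The delicate step is the injectivity: one must genuinely invoke both the Vinberg parabolic-exclusion (to exclude $\tau_x=0$) and the combined no-dotted/no-bold constraint (to force $\l n_x,n_y\r\le 0$); the counting at the end is just bookkeeping.
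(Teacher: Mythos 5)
Your proof is correct, and it follows the same underlying finiteness principle as the paper (a node of $\Sigma'$ is pinned down by its finitely many possible Gram entries), but the implementation is genuinely different. The paper assumes $|\Sigma'|>d$, picks an auxiliary node $x\in\Sigma'\setminus S$, observes that $\langle S,x\rangle$ has signature $(d,1)$ (for the same reason you exclude $\tau_x=0$: a degenerate positive semidefinite subdiagram would contain a parabolic component) and hence yields a basis of $\R^{(d,1)}$, and then notes that every further node $y$ is completely determined by its $d+1$ inner products with that basis; counting the finitely many configurations $\langle S,x\rangle$ (say $N_0(d,k)$ of them) and the at most $(k-1)^{d+1}$ attachments of $y$ gives $Q_0(d,k)=N_0(d,k)(k-1)^{d+1}+d+1$. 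You dispense with the auxiliary node by decomposing $n_x$ orthogonally with respect to $V_S$ and resolving the resulting sign ambiguity in $\tau_x$ by hand; the cost is the extra injectivity argument (ruling out $\tau_x=0$ via parabolic exclusion and $\tau_y=-\tau_x$ via $\langle n_x,n_y\rangle=1+2\tau_x^2>1$, which correctly exploits both the no-dotted hypothesis and the absence of bold edges in compact Coxeter diagrams), and the reward is a cleaner and sharper explicit bound $d+(k-1)^d$. Both routes are sound; the paper's choice of a full basis $\langle S,x\rangle$ is the device it reuses in Lemma~\ref{n_1} together with Remark~\ref{lin}, and your sign analysis is in effect a worked-out instance of the ``determined up to a twofold symmetry'' phenomenon recorded in that remark.
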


\begin{proof}
Assume that $|\Sigma'|>d$. Then there exists $x\in\Sigma'\setminus S$.
Since $\Sigma'\subset \Sigma\in  \mathcal{C}_{d,k}$, multiplicity of each edge of $\l S,x\r$ is uniformly bounded, so the number of possible configurations $\l S,x\r$ is finite, denote the number of them by $N_0(d,k)$. 

Recall that the signature of $\l S,x\r$ is $(d,1)$. This implies that the corresponding vectors in $\R^{(d,1)}$ (denote them $v_0,\dots,v_d$) form a basis, so for any $y\in\Sigma'\setminus\l S,x\r$ the corresponding vector $v\in \R^{(d,1)}$ is completely determined by the weights of the edges joining $y$ with  $\l S,x\r$.
Again, since $\Sigma'\subset \Sigma\in  \mathcal{C}_{d,k}$, for each $\l S,x\r$ there is at most $(k-1)^{d+1}$ ways to join $y$ with $\l S,x\r$. Thus, for each of finite number of configurations of vectors ($v_0,\dots,v_d$) there are finitely many vectors $v$ which may correspond to nodes of $\Sigma'\setminus\l S,x\r$, more precisely the number is bounded by $N_0(d,k)(k-1)^{d+1}$. Adding to this number the order of $\l S,x\r$, we get the bound $Q_0(d,k)$.
\end{proof}

\begin{example}
To illustrate application of Lemma~\ref{n_0} we provide a very rough bound for $4$-dimensional case. Let $\Sigma(P)\in\mathcal{C}_{4,k}$. We will estimate the number of facets of $P$.

First, compute the number of distinct elliptic diagrams $S$ of order $4$. There are $5$ connected ones ($A_4$, $B_4$, $D_4$, $F_4$, and $H_4$), $3$ non-connected ones with connected component of order $3$ (direct sum of $A_1$ and one of $A_3$, $B_3$, $H_3$), and $k(k-1)/2$ diagrams of type $G_2^{(m)}+G_2^{(l)}$, $2\le l\le m\le k$. So, the number of elliptic diagrams is $8+k(k-1)/2$.

Next, compute $N_0(4,k)$. A node $x\in\Sigma(P)\setminus S$ may be attached to an elliptic diagram $S$ in $(k-1)^4$ ways (this bound is very rough: we forget about huge number of symmetries; moreover, a large part of these configurations gives rise to elliptic or parabolic diagrams). So, $$N_0(4,k)\le (8+k(k-1)/2)(k-1)^4$$.

Now, any node $y\in\Sigma(P)\setminus\l S,x\r$ may be attached to $\Sigma(P)\setminus\l S,x\r$ in at most $(k-1)^5$ ways (again, this bound is very rough). Hence, we may estimate that $|\Sigma(P)|\le Q_0(4,k)$, where
$$Q_0(4,k)\le N_0(4,k)(k-1)^5+5\le (8+k(k-1)/2)(k-1)^9+5$$   
\end{example}

\begin{lemma}
\label{n_1}
There exists a constant $Q_1(d,k)$ such that for any diagram $\Sigma'$ satisfying 
\begin{itemize}
\item[1)]
$\Sigma'\subset \Sigma$ for some diagram $\Sigma\in  \mathcal{C}_{d,k}$,
\item[2)]
$\Sigma'$ contains an elliptic subdiagram $S$ of order $d$ 
and a node $x\notin S$ such that 
\begin{itemize}
\item[2a)]
$\l S,x\r$ contains no dotted edges,
\item[2b)]
at most one node of $\Sigma'\setminus \l S,x\r$ is joined with $\l S,x\r$ by a dotted edge,
\end{itemize}
\end{itemize}
the inequality
$|\Sigma'|<Q_1(d,k)$ holds.
%In particular, the number of diagrams $\l S,x\r\subset  \mathcal{C}_{d,k}$ without dotted edges is finite.
\end{lemma}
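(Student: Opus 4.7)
The plan is to mimic the proof of Lemma~\ref{n_0}, separating off the (at most one) node of $\Sigma'\setminus\l S,x\r$ that is joined to $\l S,x\r$ by a dotted edge. By assumption 2b, let $z$ denote this node if it exists, and set $\Sigma''=\Sigma'\setminus\{z\}$; otherwise take $\Sigma''=\Sigma'$. In either case $|\Sigma'|\le|\Sigma''|+1$, so it suffices to bound $|\Sigma''|$ by a constant depending only on $d$ and $k$.

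Next I apply the vector-counting argument of Lemma~\ref{n_0} to $\Sigma''$. By assumption 2a and $\l S,x\r\subset\Sigma\in\mathcal{C}_{d,k}$, the diagram $\l S,x\r$ contains no dotted edges and each of its other edges has multiplicity at most $k-2$, so its edge weights lie in the finite set $\{0\}\cup\{\cos(\pi/m):3\le m\le k\}$. For every $y\in\Sigma''\setminus\l S,x\r$, by the definition of $\Sigma''$ the node $y$ is not joined to $\l S,x\r$ by any dotted edge, hence the same finite set of weights controls every edge from $y$ into $\l S,x\r$. Since $\l S,x\r$ is hyperbolic of signature $(d,1)$, the $d+1$ vectors in $\R^{d,1}$ corresponding to its nodes form a basis, so the vector $v_y$ representing each such $y$ is uniquely determined by its inner products with this basis, i.e.\ by its $d+1$ weights to $\l S,x\r$. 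As distinct nodes of $\Sigma$ correspond to distinct outer-normal vectors, this gives at most $k^{d+1}$ choices for $y$, so $|\Sigma''|\le(d+1)+k^{d+1}$ and hence $|\Sigma'|\le(d+2)+k^{d+1}$, which we may take to be $Q_1(d,k)$.

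There is no genuine obstacle in this argument: once the unique dotted-neighbor $z$ of $\l S,x\r$ is removed, the remaining nodes of $\Sigma''$ are in exactly the situation of Lemma~\ref{n_0}, since it is precisely dotted edges incident to $\l S,x\r$ that could, in principle, allow continuously many positions for a node in $\R^{d,1}$. The single $+1$ in the bound accounts for the one exceptional node $z$.
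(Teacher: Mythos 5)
Your proof establishes a weaker statement than the one the paper needs, because of how you have read hypothesis 2b. You interpret it as ``at most one node $z$ of $\Sigma'\setminus\l S,x\r$ carries a dotted edge into $\l S,x\r$,'' delete that single node, and add $1$ to the bound. But the hypothesis as the paper uses it --- both in its own proof of this lemma and, decisively, in the application inside Lemma~\ref{n}, where $\mathcal{M}_{\le 1}$ is defined as the set of \emph{all} nodes each joined to $\l S,x,y\r$ by at most one dotted edge --- is that \emph{every} node of $\Sigma'\setminus\l S,x\r$ is joined to $\l S,x\r$ by at most one dotted edge. Under that reading there may be arbitrarily many exceptional nodes, each contributing its own dotted edge into $\l S,x\r$ (possibly to different nodes $z$), and your reduction to $\Sigma''=\Sigma'\setminus\{z\}$ collapses: you cannot discard them all at a cost of $+1$, and for each such node one of its $d+1$ inner products with the basis $\l S,x\r$ ranges over a continuum, so the naive $(k-1)^{d+1}$ count no longer applies to it.

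The missing idea is exactly Remark~\ref{lin}: in the non-degenerate space $\R^{d,1}$ a unit vector is determined, up to at most two choices, by its inner products with all but one element of a basis. For a node $y$ joined to $\l S,x\r$ by a single dotted edge $\l y,z\r$, the inner products of $v_y$ with the $d$ nodes of $\l S,x\r\setminus z$ are weights of non-dotted edges and hence take at most $k-1$ values each; together with $(v_y,v_y)=1$ this pins down $v_y$ (and in particular the weight of the dotted edge) up to two choices. Summing over the $d+1$ possible positions of $z$ gives at most $2(d+1)(k-1)^d$ such nodes, which is the extra term you need on top of your (correct, and indeed slightly cleaner than the paper's) count of at most $(k-1)^{d+1}$ nodes having no dotted edge into $\l S,x\r$. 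That latter part of your argument is sound and coincides with the vector-counting in Lemma~\ref{n_0}; it is only the treatment of the dotted-edge nodes that has to be repaired.
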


\begin{remark}
\label{lin}
Before proving Lemma~\ref{n_1} we recall some elementary fact from linear algebra. Let $(V,g)$ be a non-degenerate quadratic space, and let $(v_0,\dots,v_d)$ be a basis. Then a system of equations 
$$(v,v)=1,\ (v,v_i)=c_i, 1\le i\le d$$
has at most two solutions. In other words, any vector is determined (up to some symmetry) by its scalar products with all but one elements of basis of $V$.
\end{remark}

\begin{proof}[of Lemma~\ref{n_1}]
We may assume that there exist $y\in\Sigma'\setminus\l S,x\r$ and $z\in \l S,x\r$ joined by a dotted edge, otherwise we take $Q_1(d,k)=Q_0(d,k)$ (see Lemma~\ref{n_0}).

Now the proof is similar to the proof of Lemma~\ref{n_0}. There are finitely many diagrams $\l S,x\r$, and  
finitely many ways to join $y$ with $\l S,x\r\setminus z$. In view of Remark~\ref{lin}, 
the weight of the edge $\l y,z\r$ is determined by the multiplicities of the edges joining $y$ with  $\l S,x\r\setminus z$. 
Thus, we get some bound for the number of nodes in $\Sigma'$. 

More precisely, we may take a rough bound
$$Q_1(d,k)=2(d+1)N_0(d,k)(k-1)^d$$
where the multiple $d+1=$${d+1}\choose{d}$ is the number of ways to choose $z$ in $\l S,x\r$
and $N_0(d,k)$ is the estimate for the number of possible configurations   $\l S,x\r$ (see proof of Lemma~\ref{n_0}). 
\end{proof}

%The same reasoning shows the following lemma.

%\begin{lemma}
%\label{n_2}
%Let $\l S,x,y\r\in \mathcal{C}_d(k)$, where $S$ is an elliptic diagram of order $d$,
%$\l S,x\r$ is a subdiagram containing no dotted edges and $\l S,x,y\r$
% be a diagram containing at most one dotted edge.
%Then the label of the dotted edge is completely determined by the multiplicities of non-dotted edges.  

%\end{lemma}

\begin{lemma}
\label{n}
For any $d\ge 4$ there exists a constant $n_0(d)$ such that 
any polytope from  ${\mathcal P}_d$ has at most $n_0(d)$ facets.
\end{lemma}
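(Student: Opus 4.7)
The plan is to combine Theorem~\ref{multi}, Lemma~\ref{n_1}, and the hypothesis $p\le n-d-2$ that defines membership in $\mathcal{P}$.

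First, fix $k=K_0(d)$ from Theorem~\ref{multi}, so that $\Sigma(P)\in\mathcal{C}_{d,k}$ for every $P\in\mathcal{P}_d$; all edge multiplicities are then bounded in terms of $d$ alone. Let $P\in\mathcal{P}_d$ with $n$ facets and Coxeter diagram $\Sigma=\Sigma(P)$. Since $P$ is simple, any vertex of $P$ corresponds to an elliptic subdiagram $S\subset\Sigma$ of order $d$. By Lemma~\ref{dplus2} there exist $x,y\in\Sigma\setminus S$ with $\l S,x,y\r$ dotless; in particular $\l S,x\r$ has no dotted edges. Partition $\Sigma\setminus\l S,x\r=A\sqcup B$, where $A$ is the set of nodes joined to $\l S,x\r$ by at least one dotted edge, and $B$ is its complement.

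Now apply Lemma~\ref{n_1} to $\Sigma'=\l S,x\r\cup B\cup\{y_0\}$ for some $y_0\in A$ (or to $\Sigma'=\l S,x\r\cup B$ when $A=\emptyset$). Inside $\Sigma'$ at most one node --- namely $y_0$ --- is joined to $\l S,x\r$ by a dotted edge, so all hypotheses of the lemma hold and we obtain
$$|B|\le Q_1(d,k)-d-2.$$
To bound $|A|$, note that every $v\in A$ contributes at least one dotted edge incident to $\l S,x\r$, so $|A|\le\sum_{v\in\l S,x\r}\delta(v)$, where $\delta(v)$ denotes the dotted degree of $v$ in $\Sigma$. The condition $p\le n-d-2$ gives $\sum_{v\in\Sigma}\delta(v)=2p\le 2(n-d-2)$, so the average dotted degree is strictly less than $2$. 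A pigeonhole/averaging argument over the dotless pairs $\l S,x\r$ --- ranging over vertices of $P$ together with their dotless extensions guaranteed by Lemma~\ref{dplus11} --- should produce a choice with $|A|\le C(d)$ for some constant $C(d)$ depending only on $d$. Combining then gives
$$n=(d+1)+|A|+|B|\le (d+1)+C(d)+Q_1(d,k)-d-2,$$
which depends only on $d$ and provides the required $n_0(d)$.

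The main obstacle is the pigeonhole step producing a suitable $\l S,x\r$: naive averaging is not automatic because different facets appear in different numbers of dotless pairs, so one needs to exploit the simplicity of $P$ (which bounds the multiplicity of each facet in vertices) together with the freedom in the choice of $x$ afforded by Lemma~\ref{dplus11} to avoid the few high-dotted-degree facets. Once this combinatorial input is in place, the bound on $|A|$ follows and the proof concludes as above.
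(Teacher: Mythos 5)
Your overall architecture matches the paper's: bound edge multiplicities by Theorem~\ref{multi}, take a vertex diagram $S$ with a dotless extension $\l S,x,y\r$ from Lemma~\ref{dplus2}, control the ``tame'' nodes by Lemma~\ref{n_1}, and use $p\le n-d-2$ for the rest. But the place where you draw the line creates a genuine gap. You split $\Sigma\setminus\l S,x\r$ into $A$ (at least one dotted edge to $\l S,x\r$) and $B$ (none), and then you need an averaging/pigeonhole argument to bound $|A|$ --- an argument you acknowledge you cannot complete. It really is problematic: the low-dotted-degree nodes need not contain any elliptic subdiagram of order $d$ (i.e.\ any vertex of $P$), so there is no evident way to choose $\l S,x\r$ avoiding the high-degree facets, and simplicity of $P$ does not obviously rescue this.

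The paper avoids the problem by drawing the line one notch higher: it sets ${\mathcal M}_{\le 1}$ to be the nodes joined to $\l S,x,y\r$ by \emph{at most one} dotted edge and ${\mathcal M}_{>1}$ to be those joined to $\l S,x\r$ by \emph{two or more}. The point is that Lemma~\ref{n_1} (via Remark~\ref{lin}) already handles a node with exactly one dotted edge to the basis $\l S,x\r$: its inner products with all but one basis vector lie in a finite set, so the node is pinned down to finitely many possibilities and the unknown dotted weight is then forced. Hence $|{\mathcal M}_{\le 1}|\le Q_1(d,K_0(d))$ with no averaging at all. The nodes you cannot control this way, namely those in ${\mathcal M}_{>1}$, each absorb at least two dotted edges, so $|{\mathcal M}_{>1}|\le p/2\le (n-d-2)/2$. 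This yields the self-referential inequality
$$n\le (d+2)+Q_1(d,K_0(d))+\frac{n-d-2}{2},$$
which solves to $n\le 2Q_1(d,K_0(d))+d+2$. So the missing idea is not a cleverer pigeonhole but the observation that ``exactly one dotted edge'' is still within reach of the linear-algebra bound, leaving only the $\ge 2$ case for the counting argument, where the factor of $2$ closes the bootstrap.
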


\begin{proof}
Let $P\in {\mathcal P}_d$ and $\Sigma=\Sigma(P)$, i.e. $p\le n-d-2$, where $n=|\Sigma|$ and $p$ is the number of dotted edges in $\Sigma$.
By Lemma~\ref{multi}, $\Sigma\in \mathcal{C}_{d,k}$ for $k=K_0(d)$ in notations of Lemma~\ref{multi}.

Let $S$ be any elliptic subdiagram of $\Sigma$ of order $d$. By Lemma~\ref{dplus2}, there exist $x,y\in\Sigma\setminus S$ such that the diagram $\l S,x,y\r$ contains no dotted edges.
Let $\mathcal {M}_{\le 1}$ be the set of nodes of $\Sigma\setminus\l S,x,y\r$ joined with $\l S,x,y\r$ by at most one dotted edge.
By Lemma~\ref{n_1}, $|\mathcal {M}_{\le 1}|\le Q_1(d,K_0(d))$.
Denote by  $\mathcal {M}_{>1}$ the set of nodes of $\Sigma$ joined with  $\l S,x\r$  by two or more dotted edges.
Clearly,  $|\mathcal {M}_{>1}|\le p/2$. Furthermore,
$$
n=(d+2)+|\mathcal {M}_{\le 1}|+|\mathcal {M}_{>1}|\le (d+2)+Q_1(d,K_0(d))+p/2. 
$$
Since  $p\le n-d-2$, we have
$$n\le Q_1(d,K_0(d))+n/2+d/2+1$$,
which implies 
$$n\le n_0(d)=2Q_1(d,K_0(d))+d+2.$$
\end{proof}

\begin{cor}
\label{finite}
The number of polytopes in ${\mathcal P}_d$ is finite.
\end{cor}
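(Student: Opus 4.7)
The plan is to combine the three main results that have been built up in the paper: Theorem~\ref{multi} (uniform bound on edge multiplicities in $\Sigma(P)$ for $P\in\mathcal{P}_d$), Lemma~\ref{n} (uniform bound on the number of facets of polytopes in $\mathcal{P}_d$), and Andreev's rigidity theorem for acute-angled polytopes stated in the introduction. The strategy mirrors the decomposition outlined in the introduction:
\[
\mathcal{P}_d=\bigsqcup_{n\ge d+1}\mathcal{P}_{(d,n)},\qquad \mathcal{P}_{(d,n)}=\bigcup_{k\ge 2}\mathcal{P}_{(d,n),k}.
\]

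First I would fix $d\ge 4$ and invoke Lemma~\ref{n} to produce $n_0(d)$ with the property that $\mathcal{P}_{(d,n)}=\emptyset$ whenever $n>n_0(d)$. This truncates the outer union to finitely many values of $n$, so it suffices to show that each $\mathcal{P}_{(d,n)}$ is finite. Next I would apply Theorem~\ref{multi} to obtain $K_0(d)$ such that every $\Sigma(P)$ with $P\in\mathcal{P}_d$ lies in $\mathcal{C}_{d,K_0(d)}$. Consequently $\mathcal{P}_{(d,n)}=\mathcal{P}_{(d,n),K_0(d)}$, and the problem reduces to showing that a single set $\mathcal{P}_{(d,n),k}$ is finite.

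For finiteness of $\mathcal{P}_{(d,n),k}$ I would argue exactly as sketched in the introduction: since $P$ is simple, there are only finitely many combinatorial types of simple $d$-polytopes with $n$ facets; for each such type there are at most $(k-1)^{\binom{n}{2}}$ ways to label the pairs of facets by dihedral angles of the form $\pi/m$ with $2\le m\le k$, so only finitely many possible angle assignments; by Andreev's theorem~\cite{An0}, an acute-angled compact hyperbolic polytope is determined up to isometry by its combinatorial type together with its dihedral angles. Hence $\mathcal{P}_{(d,n),k}$ is finite, and assembling the finite union
\[
\mathcal{P}_d=\bigsqcup_{d+1\le n\le n_0(d)}\mathcal{P}_{(d,n),K_0(d)}
\]
yields the corollary.

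There is no genuine obstacle remaining: all the real work has been done in Theorem~\ref{multi} and Lemma~\ref{n}. The only point worth being careful about is bookkeeping—making sure that the constants $K_0(d)$ and $n_0(d)$ both depend only on $d$ (which they do), so that the outer union over $n$ and the inner set of angle assignments are simultaneously finite. Given this, the corollary follows in a few lines.
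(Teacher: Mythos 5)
Your proposal is correct and follows essentially the same route as the paper: both combine Lemma~\ref{n} (bound on the number of facets), Theorem~\ref{multi} (bound on edge multiplicities), and Andreev's rigidity theorem to conclude that only finitely many Coxeter diagrams — and hence finitely many polytopes — can occur. The paper phrases the counting directly in terms of Coxeter diagrams with bounded order and bounded multiplicities (ignoring the weights of dotted edges), while you phrase it via combinatorial types and angle assignments, but this is the same argument.
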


\begin{proof}
Let $P\in {\mathcal P}_d$ and $\Sigma=\Sigma(P)$. By Lemma~\ref{n}, the number of nodes in $\Sigma$ is bounded by some constant $n_0(d)$ depending on $d$ only.
According to Lemma~\ref{multi}, $\Sigma\in \mathcal{C}_{d,K_0(d)}$. Clearly, the number of Coxeter diagrams with  bounded number of nodes and bounded multiplicities of edges is finite (we do not write weights of dotted edges).  
By Andreev's Theorem~\cite{An} each of the possible diagrams corresponds to at most one Coxeter polytope.
So, there are finitely many Coxeter diagrams that may occur to be diagrams of polytopes from ${\mathcal P}_d$, which implies that ${\mathcal P}_d$ is finite.
\end{proof}

Combining Corollary~\ref{finite} with the result of Vinberg~\cite{abs} stating that the dimension of compact hyperbolic Coxeter polytope does not exceed
$29$, we obtain the following theorem.

\begin{theorem}
\label{main}
The set ${\mathcal P}$ is finite.
\end{theorem}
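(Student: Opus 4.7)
The plan is essentially to assemble the two finiteness results already in hand. Recall that the statement is formulated for the set
$$\mathcal{P} = \bigsqcup_{d \ge 4} \mathcal{P}_d,$$
where $\mathcal{P}_d$ consists of those $P \in \mathcal{P}$ of dimension $d$. The first observation is that Corollary~\ref{finite} already gives that each individual $\mathcal{P}_d$ is finite; this used the bound on edge multiplicities from Theorem~\ref{multi}, the bound on the number of facets from Lemma~\ref{n}, and Andreev's uniqueness theorem to pass from Coxeter diagrams back to polytopes. So all the hard work at the level of a fixed dimension has been done.

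Next I would invoke the theorem of Vinberg~\cite{abs}, which states that a compact hyperbolic Coxeter polytope can exist only in dimensions $d \le 29$. Since every element of $\mathcal{P}$ is by definition a compact hyperbolic Coxeter polytope, this means $\mathcal{P}_d = \emptyset$ whenever $d > 29$. Consequently the disjoint union defining $\mathcal{P}$ actually runs only over $4 \le d \le 29$, which is a finite range.

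Combining these two ingredients, $\mathcal{P}$ is a finite union
$$\mathcal{P} = \bigsqcup_{d=4}^{29} \mathcal{P}_d$$
of finite sets, and hence is finite. There is no real obstacle here beyond ensuring that the Vinberg bound applies in our setting; since we only consider compact hyperbolic Coxeter polytopes in $\mathcal{P}$ (by the definition in the introduction together with the framework of Section~\ref{cox}), Vinberg's theorem applies directly. The substantive content of the Main Theorem is therefore already concentrated in Theorem~\ref{multi}, Lemma~\ref{n} and Corollary~\ref{finite}; the final step amounts only to noting that boundedness of $d$ turns the countable union into a finite one.
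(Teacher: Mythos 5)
Your proposal is correct and matches the paper's argument exactly: the paper derives Theorem~\ref{main} by combining Corollary~\ref{finite} (finiteness of each $\mathcal{P}_d$) with Vinberg's bound $d\le 29$ from~\cite{abs}, just as you do. Nothing further is needed.
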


\section{Algorithm}
\label{algorithm}

In Section~\ref{fin} we proved that ${\mathcal P}_d$ is finite.
Since we have a bound on the number of facets of any polytope $P\in{\mathcal P}_d$,
we are limited to finitely many combinatorial types of polytopes.
Given a combinatorial type, we have finitely many possibilities to assign weights 
to non-dotted edges of $\Sigma=\Sigma(P)$ (due to Lemma~\ref{multi}).
However, the possible labels of dotted edges remain undefined.
In particular, given a combinatorial type of $d$-polytope (satisfying the condition $p\le n-d-2$) we even can not still check 
whether there exists a compact Coxeter polytope of this combinatorial type.

In this section we describe an algorithm which allows us to determine the labels of the dotted edges one after another. 
Using the algorithm one can check if the given combinatorial type is realizable in ${\mathcal P}_d$, as well as to find all possible realizations in this class.

The main idea of the algorithm is the following. First, for any polytope $P\in{\mathcal P}_d$ and its Coxeter diagram $\Sigma(P)$ (with unknown labels of dotted edges) we show the way to find out the weights of dotted edges. For this we point out a sequence of subdiagrams 
$$\Sigma_0\subset\Sigma_1\subset\dots\subset\Sigma_{n-d}=\Sigma(P),\quad |\Sigma_i|=d+i$$
together with a way to determine weights of dotted edges in $\Sigma_i$ by the weights of dotted edges of  $\Sigma_{i-1}$. 

Next, we use results of the previous two sections to show that for any $i\le n_0(d)$ there are finitely many ways only to assign weights to $\Sigma_i$. Then, starting from all possible elliptic diagrams $\Sigma_0$ and adding vertices one by one, we get a (huge) list of diagrams which are candidates to be subdiagrams of $\Sigma(P)$ for $P\in{\mathcal P}_d$. At each step we check the signature of every diagram and preserve only ones with correct signature (which is $(d,1)$). For diagrams with correct one we also check whether it is already a diagram of a polytope. If at some step $i$ the set of diagrams $\Sigma_i$ is empty, the procedure is done. This cannot happen later than at step number $n_0(d)-d$. 

In this way we get Coxeter diagrams of all the polytopes from   ${\mathcal P}_d$.

\begin{lemma}
\label{alg_p}
Given a polytope $P\in{\mathcal P}_d$ and its Coxeter diagram $\Sigma=\Sigma(P)$ with unknown weights of dotted edges, there exists a finite algorithm which
provides the weights of dotted edges of $\Sigma$. 
\end{lemma}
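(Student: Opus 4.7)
The plan is to realise the scheme outlined immediately before the lemma: build a chain
$$\Sigma_0\subset\Sigma_1\subset\cdots\subset\Sigma_{n-d}=\Sigma(P),\qquad |\Sigma_i|=d+i,$$
so that at the $i$-th step the weights of the dotted edges incident to the newly adjoined node are recovered from the information already known in $\Sigma_{i-1}$. Since only finitely many candidate assignments will be produced at each step, this immediately gives a finite algorithm.

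For the base I would use that $P$ is simple (being a compact hyperbolic Coxeter polytope), so every vertex of $P$ corresponds to an elliptic subdiagram of $\Sigma$ of order $d$; fix any such subdiagram as $\Sigma_0$. Elliptic diagrams contain no dotted (nor bold) edges, so the weights of $\Sigma_0$ are known. By Lemma~\ref{dplus2} one can pick $x,y\in\Sigma\setminus\Sigma_0$ so that $\l\Sigma_0,x,y\r$ has no dotted edges; put $\Sigma_1=\l\Sigma_0,x\r$ and $\Sigma_2=\l\Sigma_0,x,y\r$. All the weights of $\Sigma_2$ are then known, and $\Sigma_1$ is a connected hyperbolic diagram of order $d+1$, so its outer unit normals form a basis of the Lorentzian space $\R^{d,1}$.

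For the inductive step, assume $\Sigma_{i-1}$ is already built with all its edge weights known. I would pick an arbitrary $v\in\Sigma\setminus\Sigma_{i-1}$, identify $v$ with its outer unit normal, and note that, because $\Sigma_1$ is a basis of $\R^{d,1}$, the vector $v$ is determined by its $d+1$ scalar products with the nodes of $\Sigma_1$. If none of the $d+1$ edges between $v$ and $\Sigma_1$ is dotted, these scalar products are known, $v$ is reconstructed, and the weight of every remaining dotted edge $\l v,u\r$ with $u\in\Sigma_{i-1}\setminus\Sigma_1$ is simply $(v,u)$. When $v$ is joined to some nodes $u_1,\ldots,u_k$ of $\Sigma_1$ by dotted edges of unknown weights $\rho_1,\ldots,\rho_k$, I would exploit the fact that $\Gr(\Sigma(P))$ has signature $(d,1)$, so every subdiagram of order $\ge d+2$ has vanishing Gram determinant. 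Imposing $\det\Gr(\l\Sigma_1,v\r)=0$ produces one polynomial equation in the $\rho_j$, and replacing one node of $\Sigma_1$ by another basis vector available inside $\Sigma_{i-1}\setminus\Sigma_1$ (this is possible as soon as $i\ge 3$, because $\Sigma_2$ already has corank $1$) and repeating gives further polynomial relations. Together with Remark~\ref{lin} applied to the norm constraint $(v,v)=1$, these relations confine $(\rho_1,\ldots,\rho_k)$ to a finite set; the algorithm then branches over the candidates, discarding any branch whose extended Gram matrix fails to have signature $(d,1)$ or produces a non-admissible dotted-edge label.

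Finiteness of the whole procedure is then clear: each step contributes only finitely many branches, and by Lemma~\ref{n} the chain has length at most $n_0(d)-d$. The main obstacle is the case when $v$ has several dotted edges into every available basis inside $\Sigma_{i-1}$; one has to verify that the polynomial system assembled from the rank conditions on several $(d+2)$-subdiagrams really cuts out a finite set of solutions for the $\rho_j$'s. This ultimately rests on the fact that $\Gr(\Sigma(P))$ has corank exactly $n-d-1$, which supplies enough algebraically independent relations among the unknown weights to prevent a positive-dimensional family of solutions.
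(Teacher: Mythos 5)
Your proposal reproduces the paper's strategy up to and including the construction of $\Sigma_2=\l S_0,x,y\r$ via Lemma~\ref{dplus2}, but the inductive step has a genuine gap, and it is exactly the one you flag yourself: when the new node $v$ is joined by two or more dotted edges to every available basis inside $\Sigma_{i-1}$, Remark~\ref{lin} no longer applies, and your fallback --- assembling vanishing-determinant conditions on several $(d+2)$-subdiagrams and asserting that they ``confine $(\rho_1,\dots,\rho_k)$ to a finite set'' --- is not proved. The corank of $\Gr(\Sigma(P))$ being $n-d-1$ does not by itself give a zero-dimensional system for the unknown weights at a fixed node: the relations you can actually write down only involve basis substitutes $u'\in\Sigma_{i-1}\setminus\Sigma_1$ whose edge to $v$ is already known, and for a node with many dotted edges there may not be enough such $u'$; even when there are, zero-dimensionality of the resulting polynomial system would still need an argument. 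Since this is precisely the hard case, the proof as written does not close.

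The paper avoids this case entirely by three devices you do not use. First, $S_0$ is not an arbitrary vertex diagram: it is chosen to contain a node $v_0$ of \emph{maximal} dotted-edge degree, which forces every other node to meet at most $p/2+1$ dotted edges. Second, the order of attachment matters: one first adjoins all nodes of $\mathcal{M}_{\le 1}$ (those with at most one dotted edge to $\Sigma_2$), each handled by Remark~\ref{lin}; the condition $p\le n-d-2$ then gives $|\Sigma_m|\ge d+(n-d+2)/2$ after this stage. Third, the remaining nodes are reached by \emph{walking along the edges of $P$}: each new node $w$ enters as the last facet of a complete diagram $\l S,w\r$ of an edge emanating from an already-known vertex $S$, so $w$ has at most one dotted edge into $S$; and if that dotted edge is present, the degree bound from the first device together with the size bound from the second guarantees some $x'\in\Sigma_{m'}\setminus S$ not joined to $w$ by a dotted edge, so that $w$ has at most one unknown product against the basis $\l S,x'\r$ and Remark~\ref{lin} again applies. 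Connectivity of the edge graph of $P$ then yields $\Sigma_{n-d}=\Sigma$. Without these three ingredients the induction cannot be completed, so the proposal as it stands is incomplete rather than an alternative proof.
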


\begin{proof}
Let $v_0\in\Sigma$ be a node incident to the maximal number of dotted edges (in particular, this means that any node of $\Sigma$ other than $v_0$ is incident to at most $p/2+1$ dotted edges). 
Let $S_0\subset \Sigma $ be any elliptic subdiagram of order $d$ containing $v_0$
(i.e., $S_0$ is a diagram of some vertex $V_{S_0}$ of $P$, and $V_{S_0}$ belongs to facet $F_{v_0}$ corresponding to $v_0$).
%By Lemma~\ref{multi} there are finitely many possibilities to realise $S$ in $\Sigma$.
Define $\Sigma_0=S_0$.
As we have explained above, for each positive integer $i\le n-d$ we will construct a subdiagram 
$\Sigma_i\supset\Sigma_{i-1}$ of order $d+i$ such that in each $\Sigma_i$ all the weights are determined.

\medskip

\noindent
{\bf Step 1: attaching nodes.} 
By Lemma~\ref{dplus2}, there exist $x,y\in\Sigma\setminus S_0$ such that $\l S_0,x,y\r$ contains no dotted edges.
Define $\Sigma_1=\l S_0,x\r$, $\Sigma_2=\l S_0,x,y\r$.  

Denote by $\mathcal{M}_{\le 1}$ the set of nodes of $\Sigma\setminus\Sigma_2$ joined with $\Sigma_2$ by at most one dotted edge, and let $z\in\mathcal{M}_{\le 1}$.
According to Remark~\ref{lin}, we can find the label of the dotted edge (if any) in $\l S_0,x,z\r$,
so we can find the vector in  $\R^{(d,1)}$ corresponding to $z$ as a linear combination of the vectors corresponding to  $\l S_0,x\r$, 
which allows us to find all the labels of the edges joining $z$ with $\Sigma_2$. Hence, we can take $\Sigma_3=\l\Sigma_2,z\r$.

In the same way, we can attach to $\Sigma_3$ any other element of $\mathcal{M}_{\le 1}$ to get $\Sigma_4$. Indeed, we find the corresponding vector in $\R^{(d,1)}$ as a linear combination of the vectors corresponding to $\l S_0,x\r$, and then compute scalar products with all the vectors corresponding to the remaining nodes $y,z$ of $\Sigma_3$. Applying this procedure for each node from $\mathcal{M}_{\le 1}$, we get a subdiagram $\Sigma_m\subset\Sigma$, where $m=2+|\mathcal{M}_{\le 1}|$, 
$|\Sigma_m|=d+2+|\mathcal{M}_{\le 1}|$.  

Each of the remaining nodes is joined with  $\Sigma_1=\l S_0,x\r$ by at least two dotted edges.     
Since   $p\le n-d-2$, this implies that there are at most $(n-d-2)/2$ nodes in $\Sigma\setminus \Sigma_m$.
In other words, to this moment $\Sigma_m$ consists of $d+2$ nodes of $\l S_0,x,y\r$
and at least $n-d-2-(n-d-2)/2=(n-d)/2-1$ attached nodes, so $m\ge (n-d+2)/2$. If $m=n-d$ then the lemma is proved, otherwise we go to the next step.

\medskip

\noindent
{\bf Step 2: walking along edges of $P$.}
Let $m'\ge m$, and suppose that we have already constructed a subdiagram $\Sigma_{m'}$. 
Let $S\subset \Sigma_{m'}$ be any diagram of a vertex (i.e., an elliptic diagram of order $d$), and let $w\notin\Sigma_{m'}$ be a node such that $\l S,w\r$ is a complete diagram of an edge.

\smallskip

\noindent
{\bf Claim:} It is possible to attach $w$ to $\Sigma_{m'}$ to get $\Sigma_{m'+1}$.
\begin{proof}[of the claim]
Let $x'$ be any node of $\Sigma_{m'}\setminus S$. 
Since $\l  S,w\r$ is a complete diagram of an edge, $w$ can be joined with $S$ by at most one dotted edge.

Suppose that $w$ is not joined with $S$ by a dotted edge. Take any $x'\in\Sigma_{m'}\setminus S$ and notice that $\l S,x'\r$ has signature $(d,1)$, so it corresponds to a basis of $\R^{(d,1)}$. Therefore, we are able to find the corresponding vector in $\R^{(d,1)}$ as a linear combination of the vectors corresponding to $\l S,x'\r$, and then compute scalar products with all the vectors corresponding to the remaining nodes of $\Sigma_{m'}$, so $\Sigma_{m'+1}=\l\Sigma_{m'+1},w\r$.

Now suppose that $w$ is joined with $S$ by a dotted edge. If $w$ is also joined by a dotted edge with each of at least $(n-d+2)/2$ nodes of $\Sigma_{m'}\setminus S$,
then there are at least $(n-d+2)/2+1\ge p/2+3$ dotted edges incident to $w$, which contradicts the choice of $v_0$.
Thus, $w$ is not joined by a dotted edge with some node $x'\in\Sigma_{m'}\setminus S$. Therefore, $w$ is joined with $\l S,x'\r$ by exactly one dotted edge, so we can attach $w$ to $\l S,x'\r$ and then to $\Sigma_{m'}$ to get $\Sigma_{m'+1}$.
\end{proof}

According to the claim above, for any diagram $S'$ of a vertex $V_{S'}$ contained in $\Sigma_{m'}$, $m'\ge m$, we can attach to $\Sigma_{m'}$
any complete diagram of edge emanating from $V_{S'}$.
Walking along the edges of $P$ we can pass from $V_S$ to any other vertex of $P$. Therefore, for each elliptic diagram $S\subset\Sigma$ of order $d$
there exists some number $m_S$ such that $S\subset\Sigma_{m_S}$. Collecting diagrams of all the vertices of $P$ one by one, we obtain $\Sigma_{n-d}=\Sigma$.  
\end{proof}

\begin{lemma}
\label{alg_fix}
Given abstract Coxeter diagram $\Sigma$ with unknown weights of dotted edges, and an integer $d\ge 4$ satisfying $p\le n-d-2$, where $p$ is the number of dotted edges and $n=|\Sigma|$, there exists a finite algorithm which
\begin{itemize}
\item[1)]
verifies if there exists $P\in{\mathcal P}_d$ such that $\Sigma=\Sigma(P)$;
\item[2)]
provides the weights of dotted edges if $\Sigma=\Sigma(P)$. 
\end{itemize}
\end{lemma}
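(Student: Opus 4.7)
The plan is to mimic the algorithm of Lemma~\ref{alg_p}. Since we do not know in advance whether a polytope with $\Sigma(P)=\Sigma$ exists, we shall (i)~branch over all possibilities that are consistent with the current partial Gram matrix whenever an unknown dotted edge has to be resolved, and (ii)~at every stage verify that the signature and the combinatorial data remain compatible with being the diagram of a polytope in ${\mathcal P}_d$. If some branch completes successfully, then by Andreev's theorem the corresponding polytope is uniquely determined and lies in ${\mathcal P}_d$ (the constraint $p\le n-d-2$ is part of the hypothesis); otherwise no such $P$ exists.

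First, I would fix $v_0\in\Sigma$ to be a node incident to the maximal number of dotted edges, exactly as in the proof of Lemma~\ref{alg_p}, and enumerate all elliptic subdiagrams $S_0\subset\Sigma$ of order $d$ containing $v_0$. For each $S_0$, enumerate all pairs $x,y\in\Sigma\setminus S_0$ such that $\l S_0,x,y\r$ contains no dotted edges. Lemma~\ref{dplus2} guarantees the existence of such a pair whenever $\Sigma=\Sigma(P)$, so if none exists one may safely return ``no''. For each valid triple $(S_0,x,y)$ the Gram matrix of the diagram $\Sigma_2=\l S_0,x,y\r$ is completely determined by the given weights, and one checks that $\l S_0\r$ is elliptic and $\l S_0,x\r$ is hyperbolic of signature $(d,1)$; otherwise this triple is discarded.

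Next, I would run the iterative node-attachment procedure of Lemma~\ref{alg_p}: at each step a node $w\notin\Sigma_{m'}$ is selected from $\mathcal{M}_{\le 1}$ (Step~1 of that proof) or via a complete edge diagram (Step~2). If $w$ is joined to $\Sigma_{m'}$ only by non-dotted edges, its row in the Gram matrix is fully determined and one merely checks consistency with $(v_w,v_w)=1$ and with the prescribed labels. If $w$ has exactly one dotted edge of unknown weight to some $z\in\Sigma_{m'}$, then by Remark~\ref{lin} the equation $(v_w,v_w)=1$ reduces to a quadratic in the unknown weight with at most two real solutions; we branch on both, immediately discarding any candidate whose absolute value is less than $1$ (which cannot be a dotted edge weight) or which would spoil the signature constraint. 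Since each dotted edge contributes a factor of at most $2$ and there are $p\le n-d-2$ of them, the resulting branching tree has at most $2^{n-d-2}$ leaves, so the procedure is finite.

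Finally, at each surviving leaf I would verify that the completed Gram matrix defines a compact hyperbolic Coxeter $d$-polytope with combinatorial structure matching $\Sigma$: the signature must be exactly $(d,1)$, the diagram must contain no parabolic subdiagrams, and its elliptic subdiagrams of each order must correspond as in~\cite{V1} to the faces of a simple polytope. Any leaf passing all these checks yields, by Andreev's uniqueness theorem, the required polytope $P\in{\mathcal P}_d$, and its Gram matrix supplies the weights of the dotted edges. The hard part, which is precisely what Lemma~\ref{alg_p} supplies, is to ensure that this finite tree really does contain, as one of its leaves, the Gram matrix of every $P\in{\mathcal P}_d$ with $\Sigma(P)=\Sigma$: starting from the triple $(S_0,x,y)$ obtained for $P$ via Lemma~\ref{dplus2}, the attachment procedure reaches every node of $\Sigma$ by walking along the edges of $P$, and at each quadratic branching one of the two roots is the geometrically correct one.
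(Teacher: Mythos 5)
Your proposal is correct and follows essentially the same route as the paper: run the node-attachment algorithm of Lemma~\ref{alg_p} on the abstract diagram, checking at every attachment that the computed scalar products agree with the weights prescribed by $\Sigma$, and reject the diagram if any check (or the final signature/face-poset verification) fails. The only substantive difference is that you branch explicitly over the at most two roots of the quadratic coming from Remark~\ref{lin}, where the paper treats the dotted weight as determined ``up to some symmetry''; this is a harmless and in fact slightly more careful refinement, and your completeness argument via the edge-walk of Lemma~\ref{alg_p} is exactly the paper's.
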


\begin{proof}
Let $\Sigma$ be an abstract Coxeter diagram with unknown weights of dotted edges. According to~\cite[Theorem~2.1 and Proposition~4.2]{V1}, $\Sigma\in\mathcal{C}_d$ if and only if its signature equals $(d,1)$ and the poset of elliptic subdiagrams of $\Sigma$ coincides with a poset of faces of some compact Euclidean $d$-polytope.   
So, the following conditions on $\Sigma$ are essential:
\begin{itemize}
\item 
the order of maximal elliptic subdiagram of $\Sigma$ equals $d$;
\item
each elliptic diagram of order $d-1$ is contained in exactly two elliptic diagrams of order $d$ (in terms of faces of $P$, each edge has exactly two vertices). 
\end{itemize}
Both conditions can be easily verified without knowing weights of dotted edges. If any of them does not hold then $\Sigma$ is not a diagram of a compact polytope.
Otherwise, we need to check, whether it is possible to assign weights to the dotted edges of $\Sigma$ to get signature $(d,1)$.

\smallskip

Now we proceed as in the algorithm constructed in Lemma~\ref{alg_p}. The only difference is in some additional computations. 
Suppose that we have already constructed a subdiagram $\Sigma_{m}\subset\Sigma$, $m\ge d+2$. This means that we know a configuration of $m+d$ vectors in $\R^{(d,1)}$ with Gram matrix corresponding to $\Sigma_{m}$. While attaching new node $w$ by expressing the corresponding vector as a linear combination of vectors corresponding to some subdiagram $\l S,x\r\subset\Sigma$ of order $d+1$, we need to compute scalar products of this vector with all the preceding $m+d$ vectors, including ones corresponding to nodes joined with $w$ by non-dotted edges. If all the scalar products coincide with ones prescribed by $\Sigma$, then we add the node to obtain $\Sigma_{m+1}$ and a configuration of $m+d+1$ vectors in $\R^{(d,1)}$, otherwise we stop the process and deduce that $\Sigma$ is not a diagram of a polytope.   
\end{proof}

\begin{lemma}
\label{alg}
There exists a finite algorithm listing Coxeter diagrams of all the polytopes from $\mathcal{P}_d$. 
\end{lemma}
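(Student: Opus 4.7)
The plan is to combine the finiteness results of the previous two sections with the verification algorithm of Lemma~\ref{alg_fix}, producing an explicit (if astronomically large) enumeration. By Lemma~\ref{n}, any $P\in\mathcal{P}_d$ satisfies $|\Sigma(P)|\le n_0(d)$, and by Theorem~\ref{multi} every non-dotted edge of $\Sigma(P)$ has multiplicity at most $K_0(d)-2$. Hence the set of \emph{abstract} Coxeter diagrams (with weights assigned to non-dotted edges, and dotted edges marked but unlabeled) that could possibly occur as $\Sigma(P)$ for some $P\in\mathcal{P}_d$ is finite and can be listed mechanically: iterate over $n\in\{d+1,\dots,n_0(d)\}$, over all $1$-skeleta on $n$ nodes, over all assignments of weights in $\{\cos(\pi/m):2\le m\le K_0(d)\}\cup\{\text{dotted}\}$ to edges, and keep only those whose number $p$ of dotted edges satisfies $p\le n-d-2$.

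Next, for each abstract diagram $\Sigma$ in this list, apply the finite procedure of Lemma~\ref{alg_fix}. This either rejects $\Sigma$ as not arising from any compact hyperbolic Coxeter $d$-polytope, or it produces the (uniquely determined, by the algorithm in Lemma~\ref{alg_p}) weights on the dotted edges that make $\Sigma$ a diagram of signature $(d,1)$ whose elliptic subdiagrams realize the face poset of a compact Coxeter polytope. By Andreev's theorem, the polytope is then determined up to isometry by its completed diagram, so no duplicates arise beyond those that could already be detected at the level of labeled diagrams.

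Collecting the successful outputs over all $\Sigma$ in our list yields the complete collection of Coxeter diagrams of polytopes in $\mathcal{P}_d$. All three loops (over $n$, over combinatorial types, over weight assignments) are finite, and each application of Lemma~\ref{alg_fix} terminates after at most $n_0(d)-d$ attachment steps, so the overall algorithm is finite. There is no genuine obstacle remaining: the real technical work was carried out in Sections~\ref{multiplicity} and~\ref{fin} (to bound $n$ and the edge multiplicities) and in Lemmas~\ref{alg_p} and~\ref{alg_fix} (to turn an abstract diagram with unknown dotted weights into either a verified Coxeter diagram or a rejection). The only point worth stressing is that the condition $p\le n-d-2$ is used twice: first combinatorially, to prune the list of abstract diagrams, and second implicitly through Lemmas~\ref{alg_p}--\ref{alg_fix}, whose reconstruction procedure relies on this inequality via Lemma~\ref{dplus2}.
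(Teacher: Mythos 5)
Your proposal is correct and coincides with the first part (the ``general algorithm'') of the paper's own proof, which likewise combines the bounds of Theorem~\ref{multi} and Lemma~\ref{n} with the verification procedure of Lemma~\ref{alg_fix} applied to the finite list of abstract diagrams. The paper then spends most of its proof describing a ``reduced algorithm'' that builds diagrams incrementally, but that is only for practical feasibility and is not needed for the bare statement of the lemma.
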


\begin{proof}

\noindent
{\bf 1. General algorithm.}\ 
In fact, Lemma~\ref{alg_fix} provides a way to classify all the polytopes $P\in\mathcal{P}_d$. Indeed, according to Theorem~\ref{multi}, $\Sigma(P)$ does not contain edges of multiplicity greater than $K_0(d)-2$, and by Lemma~\ref{n}, $|\Sigma(P)|\le Q(d,K_0(d))$. The number of Coxeter diagrams (without labels of dotted edges) is finite, and for each of them we can find out whether there exists a corresponding polytope $P\in\mathcal{P}_d$. 

However, the number of diagrams we need to check in this way is extremely large even for small $K_0(d)$ and $Q(d,K_0(d))$. Below we provide a shorter algorithm which becomes realizable for reasonable $K_0(d)$. An example of application to $d=4$ (with some further refinements) is presented in Section~\ref{dim4}.  

\medskip

\noindent
{\bf 2. Reduced algorithm.} 

We proceed as in the algorithm constructed in Lemma~\ref{alg_p} by listing all possible diagrams $\Sigma_i$ on each step.

First, we list all possible diagrams $\Sigma_0=S_0$. According to Theorem~\ref{multi}, the multiplicity of edges of $\Sigma_0$ does not exceed $K_0(d)-2$, so we can list all elliptic diagrams of order $d$. Denote the list of all obtained diagrams by $L_0$. Then we construct the list $L_1$ of diagrams $\Sigma_1=\l\Sigma_0,x\r$ without dotted edges. 

After constructing any diagram $\Sigma_i$ we immediately check whether $\Sigma_i$ is already a diagram of a polytope. By Proposition~\ref{same}, a Coxeter diagram of $d$-polytope cannot contain a diagram of another $d$-polytope as a proper subdiagram. Therefore, if we get a diagram $\Sigma_i$ of $d$-polytope, we put it in the resulting list of polytopes and exclude from further considerations. 

By Lemma~\ref{dplus2}, there exists $\Sigma_2=\l\Sigma_1,y\r=\l \Sigma_0,x,y\r$ without dotted edges. In particular, the diagram $\l\Sigma_0,y\r$ should appear in $L_1$. Therefore, we do the following: for each pair $(\Sigma_1,\Sigma_1')$ of (possibly isomorphic) diagrams from $L_1$ constructed by the same $\Sigma_0$ (i.e. $\Sigma_1=\l\Sigma_0,x\r$, $\Sigma_1'=\l\Sigma_0,y\r$) we compose a diagram $\Sigma_2=\l\Sigma_0,x,y\r$ and compute the weight of the edge $\l x,y\r$ from the equation $\det(\Sigma_2)=0$. We put $\Sigma_2$ in the list $L_2$ if and only if the weight equals $\cos(\pi/k)$ for some positive integer $k\le K_0(d)$. Any $\Sigma(P)$ contains some diagram from $L_2$ as a subdiagram.

\noindent
{\bf 2.1 Attaching $\mathbf{\mathcal{M}_{\le 1}}$} 

Define the set $\mathcal{M}_{\le 1}$ as the set of nodes of future $\Sigma(P)$ joined with $\Sigma_2$ by at most one dotted edge. As we have already proved (see Step~1 of the proof of Lemma~\ref{alg_p}), after attaching of all the nodes of $\mathcal{M}_{\le 1}$ to $\Sigma_2$ we get at least $d+(n-d+2)/2$ nodes, so we obtain an inequality $$|\mathcal{M}_{\le 1}|+d+2\ge (n+d+2)/2,$$ which is equivalent to $$n\le 2|\mathcal{M}_{\le 1}|+d+2$$  
Further procedure depends on $|\mathcal{M}_{\le 1}|$ in $\Sigma(P)$ we are looking for.

If $\mathcal{M}_{\le 1}=\emptyset$, then $n=d+2$ and we are done. 

Now suppose that $|\mathcal{M}_{\le 1}|\ge 1$. The procedure to attach the nodes is the following. For each node $v\in\Sigma_1$ we join a new node $w$ with all nodes of $\Sigma_1\setminus v$ by non-dotted edges in all possible ways, and for each of them compute the weights of the edges $\l w,v\r$ and $\l w,y\r$. If one of these weights is equal to $\cos(\pi/k)$ for some positive integer $k\le K_0(d)$, and another is either equal to $\cos(\pi/k')$ or is greater than one, then we get $\Sigma_3=\l\Sigma_2,w\r$ and put it in the list $L_3$.

If we assume that $|\mathcal{M}_{\le 1}|\ge 2$, then we need to attach other nodes from $\mathcal{M}_{\le 1}$. For this we take all the pairs $(\Sigma_3,\Sigma_3')$ of (possibly isomorphic) diagrams from $L_3$ with the same $\Sigma_2$ (i.e. $\Sigma_3=\l\Sigma_2,w\r$, $\Sigma_3'=\l\Sigma_2,w'\r$), compose a diagram $\Sigma_4=\l\Sigma_2,w,w'\r$ and compute the weight of the edge $\l w,w'\r$ from the equation $\det(\l\Sigma_0,w,w'\r)=0$ (since $|\l\Sigma_0,w,w'\r|=d+2$). We put $\Sigma_4$ in the list $L_4$ if and only if the weight either is equal to $\cos(\pi/k)$ for some positive integer $k\le K_0(d)$ or is greater than one. In this case we call the pair $(\Sigma_3,\Sigma_3')$ {\it compatible}. So, the list $L_4$ consists of unions of pairs of compatible diagrams from $L_3$. 

In the same way we can construct a list $L_5$ as the set of triples of mutually compatible diagrams from $L_3$, and so on. In other words, if we are looking for $\Sigma(P)$ with $|\mathcal{M}_{\le 1}|=m'-2$, then $\Sigma(P)$ should contain a subdiagram from the list $L_{m'}$ composed of $(m'-2)$-tuple of mutually compatible diagrams from $L_3$. 
Notice that $m'\le Q(d,K_0(d))-d$, so the number of lists $L_i$ is finite.

\noindent
{\bf 2.2 Walking along edges}\ 

Denote by $m$ the maximal index of $L_i$ obtained at the previous step, i.e. $L_{m+1}=\emptyset$ but $|L_{m}|\ge 1$.  
We fix $n$ and look for polytopes $P\in \mathcal{P}_{(d,n)}$. Notice that we have a bound on $n$: 
$$n\le 2(m-2)+d+2=2m+d-2$$ 

Define the list $L'= \bigcup\limits_{i=3}^{m}{L_i}$. Let $L$ be the subset of $L'$ consisting of all diagrams which are not diagrams of $d$-polytopes and which contain no more than $n-d-2$ dotted edges.  

Consider any diagram $\Sigma_i\in L$. $\Sigma_i$ is not a diagram of a polytope, so it contains at least one elliptic subdiagram $S$ of order $d$ which belongs to less than $d$ complete diagrams of edges. We will attach to diagram $\Sigma_i$ a node $v$ to create a new   complete diagram of an edge in the following way: for each $u\in \Sigma_i\setminus S$ we attach $v$ to $\l S,u\r$ with at most one dotted edge, and then compute the weights of all the other edges joining $v$ with $\Sigma_i$. We need to attach nodes with at least $2$ dotted edges only, otherwise we get a diagram $\Sigma_{i+1}\in L_{i+1}$. Here we assume that the diagram of the initial vertex $S_0$ contains a node incident to a maximal number of dotted edges, which implies that $v$ is not joined by a dotted edge with some node $u\in \Sigma_i\setminus S$ (the proof is identical to the one provided in the claim in Step~2 of Lemma~\ref{alg_p}).  
 
Let $L^1$ be the set of all such diagrams $\l \Sigma_i,v\r$ for all $\Sigma_i\in L$ and $\l S,u\r\subset\Sigma_i$  
%that are not diagrams of $d$-polytopes and 
that contain no more than $n-d-2$ dotted edges.   

Creating in this way complete diagrams of edges and attaching them to all $\Sigma_{i+1}\in L^1$, we obtain the set $L^2$. In the same way we get the sets $L^3$, $L^4$ and so on (we attach a new node to some $\Sigma_i$ only if  $\Sigma_i$ is not a diagram of a $d$-polytope). The union $(\bigcup\limits_{i\le\frac{n-d-2}{2}}{L^i})\cup L_*$  contains  diagrams $\Sigma(P)$ of all the polytopes $P\in{\mathcal P}_{(d,n)}$ (where $L_*$ is the list of all $d$-polytopes obtained at the Step~2.1).
\end{proof}

\begin{remark}
\label{fast}
Even the reduced algorithm is not too fast. Below we list some method to obtain better estimates for $K_0(d)$
and to improve the algorithm.

\smallskip

\noindent
\textbf{1.} Better estimate for $K_0(d)$.\\
 Let $\Sigma$ be a diagram of a polytope $P\in {\mathcal P}(d,n)$ and let 
$k\le K_0(d)-2$ be the maximal multiplicity of an edge in $\Sigma$.
Let $S\subset \Sigma$ be an elliptic subdiagram of order $d$ containing an edge of multiplicity $k$.
By Lemma~\ref{dplus2},  there exist two points $x,y$ such that $\l S,x,y \r\in \Sigma$ contains no dotted edges
and all edges in $\l S,x,y \r$ have multiplicity less or equal to $k$.

This gives us a way to obtain a better estimate for the maximal multiplicity of an edge.
We consider all elliptic diagrams $S$ of order $d$ containing no edges of multiplicity greater than $K_0(d)-2$
and let $k(S)$ be the maximal multiplicity of an edge in $S$.
For each of these diagrams $S$ we consider all indefinite diagrams  $\l S,x,y \r$ with zero determinant 
containing neither dotted edges nor edges of multiplicity greater than $k(S)$.
The number of these diagrams may not be too large. Then the maximal multiplicity amongst ones  appearing in all 
these diagrams can be taken into account instead of $K_0(d)-2$.

\smallskip

\noindent
\textbf{2.} Changing starting vertex.\\
In the algorithm provided in the proof of Lemma~\ref{alg} we start from diagram $S_0$ of a vertex satisfying some requirements. 
More precisely, $S_0$ should contain a node $v_0$ incident to maximal number of dotted edges. However, sometimes this condition can be avoided. 

Suppose that we have already attached all the nodes from  $\mathcal{M}_{\le 1}$ to get $\Sigma_{i}$. Now we want to attach to $\Sigma_{i}$ a new node $v$ to get a complete diagram of some edge $\l S,v\r$ for diagram of some vertex $S$. 
As we have seen above, 
$$n\le 2i+d-2,$$
so we have an inequality 
$$p\le n-d-2\le 2i-4$$
If we assume that $\Sigma_{i}$ already contains at least $i-4$ dotted edges, then any new node $v$ is incident to at most $i$ dotted edges, so there exists $u\in\Sigma_i\setminus S$ which is not joined with $v$ by a dotted edge. Therefore, we can apply our algorithm.

The requirement for $\Sigma_{i}$ to contain at least $i-4$ dotted edges is not too restrictive. Since $|\mathcal{M}_{\le 1}|=i-2$, this condition holds, for example, if there are no triples $(\Sigma_3,\Sigma_3',\Sigma_3'')$ of mutually compatible diagrams such that  $\l\Sigma_3,\Sigma_3',\Sigma_3''\r$ contains no dotted edges. But while avoiding the requirement on $S_0$, we hugely reduce the computations. Indeed, we can now start from diagram  $S_0$ of any vertex. In particular, we may assume that  $S_0$ contains an edge of the maximal multiplicity in $\Sigma$. This allows us in each case to make calculations up to some multiplicity depending on the starting elliptic diagram, but not up to $K_0(d)-2$.      
\end{remark}

\section{Polytopes in dimension $4$}
\label{dim4}

In this section, we apply the algorithm provided in Lemma~\ref{alg} (taking into account Remark~\ref{fast}) to classify polytopes from $\mathcal{P}_4$.
We list the main steps and further essential refinements.

Suppose that $\Sigma=\Sigma(P)$ is a Coxeter diagram of a polytope, and consider an edge $S_0$ of maximal multiplicity $k-2$ in $\Sigma$. According to the algorithm, we want to find a subdiagram $\l S,x\r\subset\o S_0$ and $y\in\Sigma$ satisfying the assumptions of Lemma~\ref{local} and to estimate its local determinant. $P(S_0)$ is a polygon, so it is either a triangle, or has a pair of non-intersecting sides. We consider these cases separately.

\noindent
{\bf Case 1.} \ $P(S_0)$ has more than $3$ sides, $k\ge 7$.\\ In this case its diagram $\Sigma_{S_0}$ contains a subdiagram described in Case~1 of Lemma~\ref{separate} with a dotted edge of weight at least $\sqrt{3/2}$. We introduce the notation on Fig.~\ref{poly}: $\rho$ is the weight of the dotted edge, and the remaining variables are integers.

\begin{figure}[!h]
\begin{center}
\psfrag{ro}{\footnotesize $\rho$}
\psfrag{k}{\footnotesize $k$}
\psfrag{l}{\footnotesize $l$}
\psfrag{m}{\footnotesize $m$}
\psfrag{p}{\footnotesize $p$}
\psfrag{q}{\footnotesize $q$}
\psfrag{s}{\footnotesize $s$}
\psfrag{t}{\footnotesize $t$}
\psfrag{u}{\footnotesize $u$}
\psfrag{x}{\footnotesize $x$}
\psfrag{y}{\footnotesize $y$}
\psfrag{S0}{\footnotesize $S_0$}
\psfrag{S}{\footnotesize $S$}
\epsfig{file=./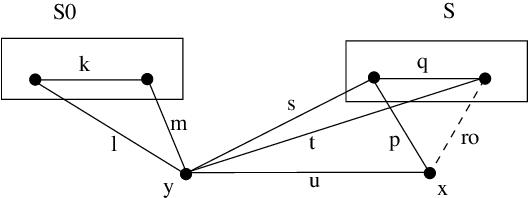,width=0.545\linewidth}
\caption{A subdiagram $\l S_0,S,x,y\r\subset\Sigma$, the case $P(S_0)$ is not a triangle}
\label{poly}
\end{center}
\end{figure}

Writing down the explicit formula of $\det(\l S,x,y\r,y)$, we can see that the local determinant is a decreasing function of $\rho$ (in the appropriate domain of all the variables). Therefore, while looking for the estimate we may assume that $\rho=\sqrt{3/2}$. We get the following expression:
$$\det(\l S,x,y\r,y)\le 1+\frac{R(p,q,s,t,u)}{-\frac{1}{2}-\cos^2{\frac{\pi}{q}}-\sqrt{6}\cos{\frac{\pi}{p}}\cos{\frac{\pi}{q}}-\cos^2{\frac{\pi}{p}}},\eqno{(*)}$$
where
\begin{multline*}
R(p,q,s,t,u)=\frac{1}{2}\cos^2\frac{\pi}{s}-2\cos{\frac{\pi}{s}}\cos{\frac{\pi}{q}}\cos{\frac{\pi}{t}}-\sqrt{6}\cos{\frac{\pi}{s}}\cos{\frac{\pi}{q}}\cos{\frac{\pi}{u}}-\\
-\sqrt{6}\cos{\frac{\pi}{s}}\cos{\frac{\pi}{p}}\cos{\frac{\pi}{t}}-2\cos{\frac{\pi}{s}}\cos{\frac{\pi}{p}}\cos{\frac{\pi}{u}}-2\cos{\frac{\pi}{t}}\cos{\frac{\pi}{p}}\cos{\frac{\pi}{u}}\cos{\frac{\pi}{q}}-\\
-\sqrt{6}\cos{\frac{\pi}{t}}\cos{\frac{\pi}{u}}-\cos^2{\frac{\pi}{t}}\left(1-\cos^2{\frac{\pi}{p}}\right)-\cos^2{\frac{\pi}{u}}\left(1-\cos^2{\frac{\pi}{q}}\right)
\end{multline*}
Now we will use the fact that every unknown weight is less than one. A rough estimate (all the entries in the denominator and positive ones in the numerator are zeros, and negative ones in the numerator are equal to $1$) gives an inequality  $\det(\l S,x,y\r,y)\le 17+6\sqrt{6}$. Applying Proposition~\ref{loc_sum} and taking into account that $\det(\l S_0,S,x,y\r)=0$, we see that $\det(\l S_0,y\r,y)\ge -20-8\sqrt{6}$. Since the local determinant $\det(\l S_0,y\r,y)$ is a decreasing function on all the weights, this implies that $\d(2,3,k)\ge -16-6\sqrt{6}$, which, in its turn, implies that $k\le 35$. 

The bound above is very rough. We can improve it in the following way. Suppose that $p=2$. Then, repeating the considerations above for the simplified expression, we obtain a bound $k\le 27$. By symmetry, we get the same if $q=2$. Applying the same procedure to the case $t=2$ (or $u=2$), we obtain an upper bound $k\le 26$. Thus, if we assume now that $k\ge 28$, then each of $p,q,t,u$ should exceed $2$. Differentiating the expression $({*})$ with respect to $p$ we see that the local determinant decreases. This implies that maximum is attended for $p=3$, and, similarly, for $q=3$. Repeating the procedure for simplified expression we get an estimate $k\le 9$, so the contradiction shows that $k\le 27$, and if $k>9$ then at least one of $p,q,t,u$ is equal to $2$.

Let us now improve the estimate another one time. Assume that $q=2$. Then we have two possibilities: either $p=2$ or $p>2$. Applying the same procedure in the both cases we get an estimate $k\le 19$. In other words, if we assume that $k>19$, then $p,q>2$. Suppose that $k>19$. Suppose also that $t>2$. Then the derivative of the local determinant with respect to $p$ occurs to be negative, which implies $k\le 9$ as we have seen above. Thus, if $k> 19$, then $p,q>2$, and $t=2$. Similarly, $u=2$. Computing the local determinant in these settings, we get a bound $k\le 7$. The contradiction shows that  if $P(S_0)$ is not a triangle then $k\le 19$.

\medskip

Now we make the following computations. For each $8$-tuple of integers $(k,l,m,p,q,s,t,u)$ (where $7\le k\le 19$ is the maximal one) we compute $\rho$ to satisfy the equation $\det(\l S_0,S,x,y\r)=0$, and list all of them satisfying $\rho\ge\sqrt{3/2}$. If $P\in\mathcal{P}_4$ and $P(S_0)$ is not a triangle, then $\Sigma(P)$ contains one of these subdiagrams. The list (denote it by $\mathcal{L}$) is large: it contains around $167000$ diagrams. The number is approximate since while running the program we allow $\rho$ to be a bit smaller to prevent errors.
Now it is easy to see that there exists a node $y_1\in\Sigma\setminus\l S_0,S,x,y\r$ which is not joined with $\l S_0,S,x,y\r$ by dotted edges (the proof is similar to Lemma~\ref{dplus2}). Thus, the diagram $\l S_0,S,x,y_1\r$ should be contained in the list $\mathcal{L}$. More precisely, we are looking for a pair of diagrams from $\mathcal{L}$ parametrized by $(k,l,m,p,q,s,t,u,\rho)$ and $(k,l_1,m_1,p,q,s_1,t_1,u_1,\rho)$, such that the diagram  $\l S_0,S,x,y,y_1\r$ has signature $(4,1,2)$, and the nodes  $y$ and $y_1$ are not joined by a dotted edge, see Fig.~\ref{dot4}.
    
\begin{figure}[!h]
\begin{center}
\psfrag{w}{\footnotesize $w$}
\psfrag{l'}{\footnotesize $l'$}
\psfrag{m'}{\footnotesize $m'$}
\psfrag{y'}{\footnotesize $y'$}
\psfrag{s'}{\footnotesize $s'$}
\psfrag{t'}{\footnotesize $t'$}
\psfrag{u'}{\footnotesize $u'$}
\psfrag{ro}{\footnotesize $\rho$}
\psfrag{k}{\footnotesize $k$}
\psfrag{l}{\footnotesize $l$}
\psfrag{m}{\footnotesize $m$}
\psfrag{p}{\footnotesize $p$}
\psfrag{q}{\footnotesize $q$}
\psfrag{s}{\footnotesize $s$}
\psfrag{t}{\footnotesize $t$}
\psfrag{u}{\footnotesize $u$}
\psfrag{x}{\footnotesize $x$}
\psfrag{y}{\footnotesize $y$}
\psfrag{S0}{\footnotesize $S_0$}
\psfrag{S}{\footnotesize $S$}
\epsfig{file=./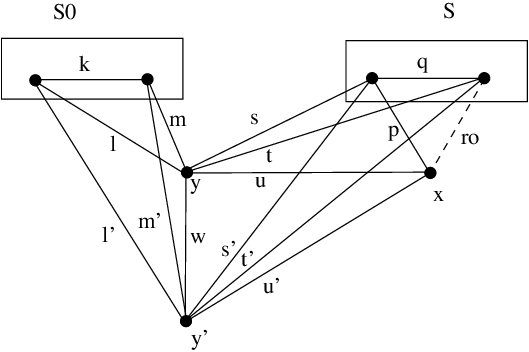,width=0.545\linewidth}
\caption{A subdiagram $\l S_0,S,x,y,y_1\r\subset\Sigma$, the case $P(S_0)$ is not a triangle}
\label{dot4}
\end{center}
\end{figure}

So, we look for pairs of diagrams from  $\mathcal{L}$ with the same $k,p,q$, and $\rho$, and for every such pair compute the weight of the edge $yy_1$. 
The result is empty list.

Therefore, for each polytope from $\mathcal{P}_4$ with $S_0$ being the edge of maximal multiplicity $k-2\ge 5$ the polygon $P(S_0)$ is a triangle.

\noindent                 
{\bf Case 2.} \ $P(S_0)$ is a triangle, $k\ge 7$. \\ 
In this case $\l S,x\r=\Sigma_{S_0}$, and $y$ is any vertex of $\Sigma$ which is not joined with $\l S_0, \Sigma_{S_0}\r$ by a dotted edge (it does exist by Lemma~\ref{dplus11}). We get the subdiagram shown on Fig~\ref{triangle}.

\begin{figure}[!h]
\begin{center}
\psfrag{r}{\footnotesize $r$}
\psfrag{r2}{\footnotesize $\rho_2$}
\psfrag{k}{\footnotesize $k$}
\psfrag{l}{\footnotesize $l$}
\psfrag{m}{\footnotesize $m$}
\psfrag{p}{\footnotesize $p$}
\psfrag{q}{\footnotesize $q$}
\psfrag{s}{\footnotesize $s$}
\psfrag{t}{\footnotesize $t$}
\psfrag{u}{\footnotesize $u$}
\psfrag{x}{\footnotesize $x$}
\psfrag{y}{\footnotesize $y$}
\psfrag{S0}{\footnotesize $S_0$}
\psfrag{S}{\footnotesize $S$}
\epsfig{file=./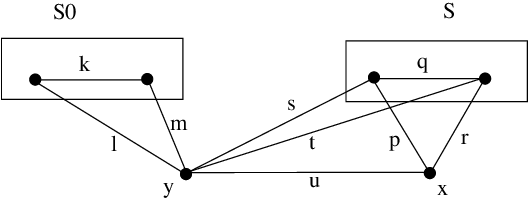,width=0.545\linewidth}
\caption{A subdiagram $\l S_0,S,x,y\r\subset\Sigma$, the case $P(S_0)$ is a triangle}
\label{triangle}
\end{center}
\end{figure}

First, we are looking for reasonable $K_0(4)$.
The considerations are very similar to the previous case, but are much more detailed, so we show the plan. We may assume that $p\le q\le r$. 
The first attempt to get a bound is to consider two cases: either $p=2$ or $p>2$. If $p>2$ then a very rough estimate gives $\det(\l S,x,y\r,y)\le 1-15/\det(L_{3,3,4})$, where $15$ is the number of negative summands in the denominator of the expansion of the expression for local determinant  $\det(\l S,x,y\r,y)$, and $L_{3,3,4}$ is a diagram of triangle with minimum possible determinant (while $p>2$). This implies that $k\le 41$. Doing the same for $p=2$ we get a bound $\det(\l S,x,y\r,y)\le 1-9/\det(L_{2,3,7})$, which implies  $k\le 76$.

At this point we could stop and say that $K_0(4)=76$. However, this estimate does not look reasonable. Making several similar refinements (considering more detailed cases for triples $(p,q,r)$) we can improve the bound. For example, subdividing the case $p=2$ into $q=3$ and $q>3$ we get an estimate $k\le 65$ for the first case and $k\le 48$ for the latter. Therefore, we see that $K_0(4)\le 65$, and if $k>48$ then $p=2$ and $q=3$. According to our algorithm (see Remark~\ref{fast}) we run the following computation for $48<k\le 65$ in these settings: we are looking for the diagram $\l S_0,y,\Sigma_{S_0}\r$ whose determinant vanishes, and all the entries do not exceed $k$. Actually, we find nothing, which implies that $K_0(4)\le 48$. Repeating the procedure for more detailed cases of triples $(p,q,r)$ and running short computations, we come to a bound $K_0(4)\le 30$.

In fact, the estimate $K_0(4)=30$ is sharp: there exists a unique diagram $\l S_0,y,\Sigma_{S_0}\r$ with zero determinant and $k=30$, it is shown on Fig.~\ref{k30}. Here $(p,q,r)=(2,3,15)$.

\begin{figure}[!h]
\begin{center}
\psfrag{x}{\footnotesize $x$}
\psfrag{y}{\footnotesize $y$}
\psfrag{30}{\footnotesize $30$}
\psfrag{15}{\footnotesize $15$}
\epsfig{file=./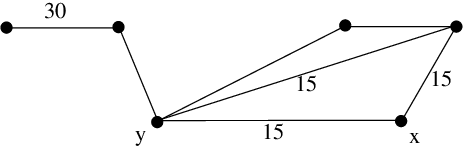,width=0.545\linewidth}
\caption{A subdiagram $\l S_0,y,\Sigma_{S_0}\r\subset\Sigma$, with $k=30$}
\label{k30}
\end{center}
\end{figure}
 
\bigskip

So, we found $K_0(d)$. Now we go to the first step of the reduced algorithm (see Lemma~\ref{alg}): we want to list all the diagrams $\Sigma_2=\l S_0,y,\Sigma_{S_0}\r$.
We find one diagram with $k=30$ (mentioned above), one with $k=20$, and $79$ with $7\le k\le 18$. Then we proceed exactly as it is prescribed by the reduced algorithm.
On our way we get all the Esselmann polytopes and $4$-polytopes with $7$ facets with multi-multiple edges. However, we do not get anything new.

\noindent                 
{\bf Case 3.} \ $k\le 6$. \\ 
In this case we follow the reduced algorithm {\it verbatim}. Again, we meet all the remaining Esselmann polytopes and $4$-polytopes with $7$ facets, but we do not find any new polytope. This leads to the following theorem which, due to results of~\cite{L,Ess1,T}, completely describes $\mathcal{P}_4$:

\begin{theorem}
\label{d4}
All the polytopes belonging to $\mathcal{P}_4$ have at most $7$ facets. 
\end{theorem}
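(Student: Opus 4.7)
The plan is to apply the reduced algorithm of Lemma~\ref{alg} to $\mathcal{P}_4$, combined with the sharpened upper bound on edge multiplicity obtained in the present section. As explained in Remark~\ref{fast}, one may choose the starting vertex diagram $S_0$ to contain an edge of maximal multiplicity $k-2$ in $\Sigma(P)$. The proof then naturally splits according to whether $k\ge 7$ (i.e.\ $\Sigma(P)$ contains a multi-multiple edge) or $k\le 6$, and in the former case further splits according to the combinatorial type of the $2$-face $P(S_0)$, which by Theorem~\ref{bor} is itself a Coxeter polytope whenever $S_0$ is a multi-multiple edge.

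For $k\ge 7$, I would distinguish the two subcases treated above. If $P(S_0)$ is not a triangle, the diagram $\Sigma_{S_0}$ contains the long dotted edge of weight $\ge\sqrt{3/2}$ supplied by Case~1 of Lemma~\ref{separate}, and a direct analysis of the local determinant $\det(\langle S,x,y\rangle,y)$ via Proposition~\ref{loc_sum}, iterated with progressive refinements on the parameters $p,q,t,u$, forces $k\le 19$; enumerating the resulting finite list $\mathcal{L}$ of candidate diagrams $\langle S_0,S,x,y\rangle$ and attempting to attach a further node $y_1$ not joined to $\langle S_0,S,x,y\rangle$ by a dotted edge (whose existence is guaranteed by the argument of Lemma~\ref{dplus2}) produces an empty list. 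If $P(S_0)$ is a triangle, a repeated refinement of the trivial bound on the triple $(p,q,r)$ together with short computer checks of diagrams $\langle S_0,y,\Sigma_{S_0}\rangle$ of vanishing determinant brings the bound down to $K_0(4)\le 30$, and running the reduced algorithm starting from all such $\Sigma_2$ recovers only the Esselmann polytopes and the previously known $4$-polytopes with $7$ facets.

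For $k\le 6$ one applies the reduced algorithm verbatim with multiplicity bound $6$: list all elliptic $\Sigma_0$, all one-node extensions $\Sigma_1$ without dotted edges, all admissible $\Sigma_2=\langle \Sigma_0,x,y\rangle$, then successively attach nodes from $\mathcal{M}_{\le 1}$ and perform the walk along edges of $P$. Pruning using Proposition~\ref{same} (a diagram of a $4$-polytope cannot properly contain another such diagram) and the inequality $p\le n-d-2$ is what makes the search feasible. Again one recovers only polytopes with at most $7$ facets.

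The main obstacle is the size of the enumeration: in the non-triangle subcase alone the intermediate list $\mathcal{L}$ has on the order of $10^5$ entries, so the argument ultimately rests on a carefully implemented computer search. The mathematical difficulty is not in any single step but in organizing the case analysis so that the bounds on $K_0(4)$ and the combinatorial constraints $p\le n-d-2$ and signature $(4,1)$ eliminate candidates fast enough; once this is done, the conclusion $n\le 7$ falls out because every surviving diagram either is a known Esselmann-type polytope with $7$ facets or corresponds to a polytope of dimension $4$ already classified with $n\le 7$.
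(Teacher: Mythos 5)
Your proposal follows essentially the same route as the paper's proof in Section~\ref{dim4}: choosing $S_0$ to be an edge of maximal multiplicity, splitting into the cases $k\ge 7$ with $P(S_0)$ not a triangle (ruled out via the list $\mathcal{L}$ and the bound $k\le 19$), $P(S_0)$ a triangle (yielding $K_0(4)\le 30$), and $k\le 6$, then running the reduced algorithm of Lemma~\ref{alg}. The case analysis, the key bounds, and the reliance on the computer enumeration all match the paper's argument.
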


\begin{remark}
The way to deal with the diagrams such that $P(S_0)$ contains dotted edge can be easily generalized to higher dimensions. If the estimate for maximal multiplicity in this case is much better than in general one (as we have for $d=4$), this may speed up the process.
\end{remark}

\section{Essential polytopes}
\label{essential}

In section~\ref{ess_def} we define the notion of essential polytope, present the list of known (to the authors) compact Coxeter polytopes and formulate the main result of this section,
Theorem~\ref{ess-t}.
In  section~\ref{ess_prop} we discuss the methods allowing to differ essential polytopes from non-essential ones.
In   section~\ref{ess_proof} we use these methods to show that all the polytopes claimed to be essential in Theorem~\ref{ess-t} are essential,
and that all other known polytopes (with possible exclusion of two polytopes listed in Question~\ref{qop}) are not essential. 

\subsection{Definition of essential polytope}
\label{ess_def}

Let $P$ be a hyperbolic Coxeter polytope. We define two elementary gluing operations in the following way.

Denote by $G_P$ the reflection group generated by reflections in the facets of $P$. Suppose that there exists a finite index subgroup $H\subset G_P$ generated by reflections.
Then the fundamental polytope $P'$ of $H=G_{P'}$ consists of several copies of $P$. As it is proved in~\cite{All}, this procedure gives rise to infinitely many Coxeter polytopes of finite volume in all dimensions up to $19$, and infinitely many compact Coxeter polytopes in all dimensions up to $6$. Conversely, taking a polytope $F$ such that $G_P\subset G_F$, we obtain a tessellation of $P$ by copies of $F$ (in this case we say that {\it $P$ is decomposed with fundamental polytope $F$}).
Clearly, a polytope minimal with respect to this procedure is a fundamental domain of maximal reflection group.

Another operation is the following. Suppose that for some facet $f$ of $P$ there exists a Coxeter polytope $P'$ with a facet $f'$ congruent to $f$, such that sums of corresponding dihedral angles composing by $f$ and $f'$ respectively are integral divisors of $\pi$. Then, gluing $P$ and $P'$ along $f$ and $f'$, we get a new Coxeter polytope. Makarov~\cite{M} used this procedure to construct infinite series of polytopes in hyperbolic spaces of dimension $4$ and $5$. In particular, he produced fundamental domains of infinite number of non-commensurable reflection groups. It is easy to describe the converse operation, i.e. {\it dissecting} a Coxeter polytope into two smaller ones. A compact Coxeter polytope $P$ can be divided into two smaller Coxeter polytopes $P_1$ and $P_2$ if there exists a hyperplane $\alpha$ dissecting $P$ such that for every facet $f$ of $P$ 

1) if $\alpha$ intersects $f$ at interior point of $f$ then $\alpha$ is orthogonal to $f$;

2) if $\alpha\cap f$ contains no interior points of $f$ and $\alpha\cap f\ne \emptyset$ then $\alpha\cap f=f\cap f'$ for some facet $f'$ of $P$, and the dihedral angles composed by $\alpha$ with $f$ and $f'$ are integral divisors of $\pi$.               

In this case we say that $P$ is {\it dissected } into Coxeter polytopes $P_1$ and $P_2$.

\begin{defin}
We say that a Coxeter polytope $P$ is {\it essential} if it is minimal with respect to both operations above, i.e. the corresponding reflection group  $G_P$ is maximal, 
and $P$ cannot be dissected by a hyperplane into two smaller Coxeter polytopes.
\end{defin}

Clearly, the classification of compact Coxeter hyperbolic polytopes can be reduced to classification of essential ones. We show that all known compact Coxeter polytopes of dimension greater than $3$ can be glued from a finite number of essential ones, and list all known essential compact Coxeter polytopes. The main question we want to ask is

\begin{question}
Is the number of essential compact hyperbolic Coxeter polytopes of dimension greater than $3$ finite? 
\end{question} 

There are two main sources of Coxeter polytopes. One is the result of combinatorial considerations. In this way the following lists of compact polytopes were obtained: 
\begin{itemize}
\item simplices~\cite{L} (in other words, $d$-polytopes with $d+1$ facets); 
\item $d$-polytopes with $d+2$ facets: 
\begin{itemize}
\item[--] simplicial prisms~\cite{K}; 
\item[--] Esselmann $4$-polytopes, i.e. products of two triangles~\cite{Ess2};
\end{itemize}
\item products of simplices~\cite{Ess2} (which appears to be a sublist of the previous list); 
\item $d$-polytopes with $d+3$ facets~\cite{Ess1},~\cite{T}; 
\item Napier cycles~\cite{ImH} (which is a sublist of the previous list); 
\item truncated simplices~\cite{S}. 
\end{itemize}

Another source is arithmetic construction. 
Fundamental domains of cocompact arithmetic reflection groups were computed in~\cite{Bu1},~\cite{Bu2}, and~\cite{Bu3}.    
Amongst them there are:

\begin{itemize}
\item $8$-dimensional polytope with $8+3=11$ facets (the only polytope in dimension $d\ge 8$ known so far);
\item $7$-dimensional polytope $7+4=11$ facets (besides its double, the only polytope known in dimension $7$);
\item $6$-dimensional polytope with $34$ facets (see~\cite[Table~2.2]{All} where the Gram matrix of this polytope is reproduced).
\end{itemize} 

To the authors best knowledge, all the other examples of compact Coxeter hyperbolic polytopes are defined by gluing smaller ones (see e.g.~\cite{M},~\cite{All},~\cite{PV}), 
but we do not care about them now since they are already glued. 

Now we want to find out which polytopes from the lists above are essential. 
We will refer to the list above as to the ``list of known polytopes''.

\begin{theorem}
\label{ess-t}
All the polytopes listed in Table~\ref{ess-list} are essential. All the remaining known compact hyperbolic Coxeter polytopes of dimension at least $4$ except ones listed in Question~\ref{qop} can be glued from these ones and the polytopes listed in Question~\ref{qop}.    
\end{theorem}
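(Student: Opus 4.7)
The proof naturally splits into two independent verifications: first, that each polytope $P$ listed in Table~\ref{ess-list} is essential, i.e.\ minimal with respect to both elementary gluing operations; and second, that every remaining known compact hyperbolic Coxeter polytope of dimension at least four (excluding those in Question~\ref{qop}) admits an explicit decomposition into copies of the essential polytopes. I would attack these two parts separately.

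For the first part one must rule out (a) any proper finite-index reflection subgroup of $G_P$ (equivalently, any decomposition of $P$ with a smaller fundamental Coxeter polytope $F$), and (b) any dissecting hyperplane as in the definition. The main lever for (a) is the result of~\cite{subgr}, which says that a reflection subgroup of $G_P$ of finite index has at least as many facet-mirrors as $G_P$; consequently any candidate $F$ has at least $|\Sigma(P)|$ facets. Combined with the classifications of compact Coxeter polytopes with few facets, namely Lann\'er simplices~\cite{L}, prisms~\cite{K}, Esselmann polytopes~\cite{Ess2} and the $(d+3)$-facet polytopes of~\cite{Ess1},~\cite{T}, this reduces the list of possible $F$ to finitely many combinatorial shapes, which can then be eliminated by direct comparison of Coxeter diagrams and Gram matrices. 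For the Bugaenko polytopes in dimensions $6$, $7$ and $8$ one appeals instead to the arithmetic structure: the maximality of $G_P$ among reflection subgroups of the relevant commensurability class follows from Vinberg's algorithm applied to the defining quadratic form.

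For step (b) one uses the following reformulation. A hyperplane $\alpha$ dissecting $P$ corresponds to a unit spacelike vector whose inner product with each outer normal $e_i$ of a facet of $P$ equals either $0$ (if $\alpha$ and $e_i$ do not meet, or meet orthogonally at an interior point of the facet), or $-\cos(\pi/k_i)$ for some integer $k_i\ge 2$ (if $\alpha$ passes through a codimension-two face of $P$). For each combinatorial pattern of intersections this is a finite linear system in the coordinates of $\alpha$, whose consistency, the spacelike character of the solution, and the Coxeter property of both pieces, can be checked mechanically. For the simplices and polytopes with $d+2$ or $d+3$ facets in Table~\ref{ess-list} the enumeration is short; for the larger arithmetic polytopes one observes that such an $\alpha$ would necessarily lie in the underlying integral lattice and hence define a mirror of the full arithmetic reflection group, which coincides with $G_P$, giving a contradiction.

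For the second part I would go through the remaining known polytopes: Makarov's infinite series in dimensions $4$ and $5$~\cite{M}, Allcock's series in dimension $6$~\cite{All} built on Bugaenko's polytope~\cite{Bu2}, the right-angled polytopes of~\cite{PV}, and the non-minimal entries of the combinatorial lists~\cite{L},~\cite{K},~\cite{Ess1},~\cite{Ess2},~\cite{T},~\cite{ImH},~\cite{S}. For Makarov's and Allcock's polytopes the gluing decomposition is built into their construction, so it suffices to identify each building block in Table~\ref{ess-list}. The right-angled case reduces, by the mirror-cutting arguments in~\cite{PV}, to a finite list of essential right-angled polytopes. The remaining simplices, prisms, Esselmann polytopes and $(d+3)$-facet polytopes outside Table~\ref{ess-list} are dissected case by case, each by an explicit symmetry hyperplane or by the extension of an existing facet. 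The main obstacle, as in part (b) above, is the systematic exclusion of dissecting hyperplanes for the large Bugaenko polytopes---in particular for the $6$-dimensional one with $34$ facets---where the combinatorial enumeration is far too big to carry out by hand and the arithmetic/lattice argument is essential. A secondary subtlety is that the list of known polytopes is itself incomplete, so our claim is only made modulo the two exceptions singled out in Question~\ref{qop}.
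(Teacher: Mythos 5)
Your overall strategy matches the paper's: split into (i) essentiality of the listed polytopes and (ii) explicit gluings for the rest, and for (i) use the reflection-subgroup rank theorem of~\cite{subgr} together with the known classifications of polytopes with few facets to reduce to a finite check. However, you have the key inequality backwards. If $P$ is decomposed with fundamental polytope $F$, then $G_P\subset G_F$, so $G_P$ is the \emph{subgroup}; Theorem~1.2 of~\cite{subgr} then gives $|F|\le |P|$, not ``$F$ has at least $|\Sigma(P)|$ facets'' as you wrote. It is precisely the upper bound $|F|\le|P|$ (Corollary~\ref{subp}) that confines the candidates $F$ to the classified lists of $d$-polytopes with at most $d+3$ facets (plus the three-times truncated simplices of~\cite{S}); with the inequality in your direction the candidate set is not finite and the subsequent reduction does not follow. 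The same bound, in the form of property $\mathrm{(I)}$ of Lemma~\ref{pr_d}, is also what controls the dissection pieces $P_1,P_2$.

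The second genuine gap is your treatment of the Bugaenko polytopes. You claim that maximality of $G_P$ ``follows from Vinberg's algorithm applied to the defining quadratic form'' and that a dissecting hyperplane ``would necessarily lie in the underlying integral lattice and hence define a mirror of the full arithmetic reflection group.'' Neither assertion is justified: a larger reflection group $G_F\supset G_P$ need not preserve the same lattice, and a dissecting hyperplane need not be rational at all --- conditions $1)$ and $2)$ in the definition of dissection are metric, not arithmetic. The paper does not use any such argument; for the $7$- and $8$-dimensional polytopes and the $6$-polytopes in Table~\ref{ess-list} the combinatorial lemmas already leave nothing to check, while for the $34$-facet $6$-polytope the question is left \emph{open} (Question~\ref{qop}) precisely because the list of $6$-polytopes with $10$ or more facets is unknown and no arithmetic shortcut is available. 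If your lattice argument worked it would settle Question~\ref{qop}, which should be a warning sign. Finally, note that the paper needs several concrete devices you do not supply to close the low-dimensional cases: the parity argument of Lemma~\ref{dis_subd} (odd-labelled edges survive any dissection since only angles $\pi/4$, $\pi/8$, $\pi/10$ can arise by halving known angles), volume comparison via Poincar\'e's formula in dimension $4$, and ad hoc arguments for two residual candidate inclusions of index $2$ and $3$; your brute-force enumeration of dissecting hyperplanes could in principle replace some of these, but as an outline it leaves the hardest cases unaddressed.
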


The following question remains open:

\begin{question}
\label{qop}
Is the polytope shown on Fig.~\ref{open}, as well as the $6$-polytope with $34$ facets, essential? 
\end{question}

\begin{figure}[!h]
\begin{center}
\psfrag{r2}{\footnotesize $\rho_2$}
\epsfig{file=./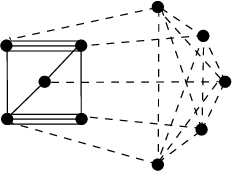,width=0.3045\linewidth}
\caption{$5$ times truncated $4$-simplex, one of the two polytopes not proven to be essential}
\label{open}
\end{center}
\end{figure}

%As it was mentioned in the introduction, all the known essential compact hyperbolic Coxeter polytopes of dimension greater than five belong to the set ${\mathcal P}$, which is finite. Motivated by (possible) relation of the set ${\mathcal P}$   

\begin{conj}
\label{conjecture}
The number of essential compact hyperbolic Coxeter polytopes of dimension at least four is finite.
\end{conj}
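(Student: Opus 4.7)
The plan is to reduce the conjecture to the Main Theorem by showing that every essential compact hyperbolic Coxeter polytope $P$ of dimension $d \geq 4$ belongs to the class $\mathcal{P}$, i.e.\ satisfies $p \leq n - d - 2$. Since Vinberg's bound gives $d \leq 29$ and the Main Theorem already establishes that each $\mathcal{P}_d$ is finite, such a reduction would immediately yield the conjecture; the paper itself notes that this is at least true for all known essential polytopes of dimension greater than five.

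To carry out the reduction, I would prove the contrapositive: if $p > n - d - 2$, then $P$ is not essential. Each pair of disjoint facets $(f_i, f_j)$ determines a unique common perpendicular geodesic $\gamma_{ij}$ and, through its midpoint, a candidate dissecting hyperplane $\alpha_{ij}$. The goal is to show that for a clever choice of $(f_i, f_j)$ --- for instance minimizing the distance, or chosen so that the corresponding dotted edge in $\Sigma(P)$ lies in the most symmetric part of the diagram --- the hyperplane $\alpha_{ij}$ satisfies the Coxeter angle condition on every facet it meets, and hence dissects $P$ into two Coxeter polytopes. The heuristic is a pigeonhole argument reminiscent of Lemma~\ref{dplus2}: once $p > n - d - 2$, the dotted edges of $\Sigma(P)$ are so densely packed that some pair must be compatible with the facets transversal to it.

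An alternative path goes through the reflection-subgroup operation. When the combinatorial symmetries of $\Sigma(P)$ arising from a large collection of disjoint facet pairs can be realized by hyperbolic isometries normalizing $G_P$, then $G_P$ embeds into a larger reflection group and $P$ decomposes as a union of fundamental copies of a smaller polytope. A complete proof would likely alternate between dissection and subgroup arguments, each handling cases the other cannot, and invoke the Main Theorem at the end. Low-dimensional cases will additionally require the refinements of Section~\ref{dim4} and their analogue for $d=5$.

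The main obstacle is verifying the Coxeter angle condition for the candidate dissecting hyperplane: nothing in the hyperbolic geometry of disjoint facets forces $\alpha_{ij}$ to meet the remaining facets at angles of the form $\pi/k$. This is compounded by the presence of right-angled polytopes (Makarov's and Allcock's infinite series) in low dimensions, which have very large $p$ but are non-essential only by virtue of explicit gluing or subgroup constructions; any uniform proof must extract such decompositions directly from the combinatorial data of $\Sigma(P)$. I expect the most promising route is an induction decreasing the pair $(n,p)$ via either a valid dissection or a nontrivial reflection subgroup at each step, with the Main Theorem supplying the base case once $p \leq n - d - 2$.
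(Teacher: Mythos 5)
This statement is posed as an open conjecture in the paper; the authors give no proof of it, and the Main Theorem covers only the strictly smaller class $\mathcal{P}$ of polytopes with $p\le n-d-2$. So there is no proof in the paper to compare against, and your proposal must stand on its own. It does not.

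The central reduction you propose --- that every essential polytope of dimension $d\ge 4$ lies in $\mathcal{P}$, equivalently that $p>n-d-2$ forces $P$ to be non-essential --- is contradicted by the paper's own results. The compact Coxeter $4$-simplices have $n=d+1$ and $p=0$, so $p=0>n-d-2=-1$; they do not belong to $\mathcal{P}$, yet they are essential and appear in Table~\ref{ess-list}. Likewise the three-times truncated $4$-simplices listed as essential have $n=8$ facets and at least $3$ pairs of disjoint facets, so $p\ge 3>2=n-d-2$. The paper is careful to claim membership in $\mathcal{P}$ only for the known essential polytopes of dimension \emph{greater than five}; in dimensions $4$ and $5$ the inclusion fails outright. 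Beyond this fatal counterexample, the mechanism you sketch for the contrapositive --- that a dense enough collection of dotted edges produces a common perpendicular hyperplane meeting all transversal facets at angles $\pi/k$ --- is exactly the step you concede cannot be verified, and the existence of such a dissecting hyperplane is a strong metric condition that a pigeonhole count of dotted edges cannot supply. The conjecture remains open, and any approach via the Main Theorem would at minimum have to handle the essential polytopes outside $\mathcal{P}$ (simplices, truncated simplices, and the low-dimensional sporadic examples) by entirely different means.
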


\bigskip

\subsection{Properties of non-essential polytopes}
\label{ess_prop}

The main tool in our consideration is the following result:

\begin{theorem}[\cite{subgr}, Theorem~1.2]      
\label{sub}
Let $G$ be an infinite indecomposable Coxeter group, and let $H\subset G$ be a finite index reflection subgroup. Then the rank of $G$ does not exceed the rank of $H$.
\end{theorem}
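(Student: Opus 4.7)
The plan is to deduce the rank inequality by exploiting the geometric realization of $G$ on its Tits cone. Let $n=\text{rank}(G)$ with simple reflections $s_1,\dots,s_n$. Since $G$ is infinite, $G$ acts faithfully and properly discontinuously on the Tits cone $U\subset\R^n$, with fundamental chamber $C_G$ bounded by $n$ walls $W_1,\dots,W_n$. By Dyer's theorem on reflection subgroups of Coxeter groups, $H$ is itself a Coxeter group whose canonical simple reflections are exactly the reflections across the walls of a fundamental chamber $C_H$ for the $H$-action on $U$; hence $m=\text{rank}(H)$ equals the number of walls of $C_H$.

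Since $[G:H]=k$ is finite, $C_H$ decomposes (up to measure-zero boundary) as a union $\bigcup_{j=1}^{k}g_j C_G$ for a transversal $g_1,\dots,g_k$ of $H\backslash G$. Each translate $g_j C_G$ has exactly $n$ facets, and every such facet lies in a hyperplane that is a $G$-translate of some $W_i$; call this facet of \emph{type $i$}. A facet of $g_j C_G$ is then either a \emph{boundary} facet (lying in $\partial C_H$ and contributing one wall to $C_H$) or an \emph{internal} facet (shared with an adjacent translate).

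The crux of the argument is to show that for every $i\in\{1,\dots,n\}$ at least one wall of $C_H$ has type $i$; once this is established, distinct types give distinct walls of $C_H$, yielding $n\le m$. To prove this I would argue by contradiction. Suppose that no wall of $C_H$ has some type $i_0$. Then every type-$i_0$ facet among the $k$ translates is internal, so the reflections across these facets define a fixed-point-free pairing of the translates. Tracking how this pairing interacts with the other reflection types --- equivalently, following the coloring of edges of the Cayley graph of $G$ restricted to the coset tree of $H$ --- one deduces that $H$ is contained in the standard parabolic subgroup generated by $\{s_i : i\ne i_0\}$. This contradicts $[G:H]<\infty$, because an infinite indecomposable Coxeter group has no proper standard parabolic subgroup of finite index.

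The main obstacle is this final propagation step: rigorously converting ``no boundary facet of type $i_0$'' into a parabolic containment for $H$. This requires a delicate analysis of how the $G$-orbit of type-$i_0$ hyperplanes partitions $U$ into half-spaces, and of how these half-spaces are permuted by the cosets $Hg_j$, in order to conclude that the quotient map $G\twoheadrightarrow G/\langle\langle s_{i_0}\rangle\rangle$ restricts to $H$ with trivial image. The combinatorial bookkeeping for this step is what constitutes the technical heart of~\cite{subgr}; everything else in the proof is setup and bookkeeping.
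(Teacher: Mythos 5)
First, note that the paper does not prove this statement at all: it is quoted verbatim from \cite[Theorem~1.2]{subgr}, so there is no in-paper argument to compare against and your proposal must stand on its own. It does not, because the claim you call the crux is false. Take $G=\langle s_1,s_2\rangle$ the infinite dihedral group and $H=\langle s_1,\, s_2s_1s_2\rangle$, an index-two reflection subgroup. Here $C_H=C_G\cup s_2C_G$, its two walls lie in $W_1$ and $s_2W_1$, and the facets they carry are $F_1$ and $s_2F_1$ --- both of type $1$; the two type-$2$ facets coincide with the internal wall $W_2$ separating the two translates. So no wall of $C_H$ has type $2$, yet nothing goes wrong: $H$ is not contained in the parabolic $\langle s_1\rangle$, it is merely contained in the kernel of the homomorphism $G\to\mathbb{Z}/2$ killing $s_2$, which has index $2$. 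This pinpoints the logical slip in your last paragraph: trivial image of $H$ in $G/\langle\langle s_{i_0}\rangle\rangle$ would give $H\subseteq\langle\langle s_{i_0}\rangle\rangle$, a normal closure (often all of $G$, and in any case possibly of finite index), not $H\subseteq\langle s_i:i\neq i_0\rangle$, so no contradiction with finiteness of the index is available.

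The final counting step fails independently of this. A wall of $C_H$ lies in a single hyperplane, but a single hyperplane can support facets of different types coming from different translates $g_jC_G$ whenever two simple reflections of $G$ are conjugate (equivalently, joined by a path of odd-labelled edges in the diagram), and it can support several facets of the same type, as in the dihedral example where both walls have type $1$. Hence even if every type did occur among boundary facets, ``distinct types give distinct walls'' would not follow: in $\widetilde{A}_2$ all three simple reflections are conjugate, there is a single $G$-orbit of hyperplanes, and your method can only guarantee as many walls as there are $G$-orbits of reflections, here $1$. The inequality is genuinely harder than a type count, and the proof in \cite{subgr} does not proceed along these lines.
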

Here by {\it rank} of reflection group we mean the number of reflections in standard generating set, i.e. in Coxeter system.

Applying this to the case of cocompact hyperbolic reflection groups, we get

\begin{cor}[\cite{subpol}, Theorem~1]
\label{subp}
If $G_P\subset G_F$, then the number of facets of $F$ does not exceed the number of facets of $P$.
\end{cor}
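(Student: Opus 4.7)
The plan is to obtain Corollary~\ref{subp} as an essentially immediate consequence of Theorem~\ref{sub}, applied with the ambient group taken to be $G_F$ and the finite index reflection subgroup taken to be $G_P$. So the first step is simply to match up the roles: in the setup of Theorem~\ref{sub}, $G$ becomes $G_F$ and $H$ becomes $G_P$.

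Next I would verify the three hypotheses of Theorem~\ref{sub} in this situation. First, $G_F$ is infinite: since $F$ is a compact hyperbolic Coxeter polytope, it serves as a fundamental domain for $G_F$ acting on the unbounded space $\H^d$, so $G_F$ is necessarily infinite. Second, $G_F$ is indecomposable: by the Vinberg characterization recalled in Section~\ref{cox}, the Coxeter diagram $\Sigma(F)$ of a compact hyperbolic Coxeter polytope is connected, which is equivalent to indecomposability of the associated Coxeter group. Third, $G_P$ is by construction a reflection subgroup of $G_F$, and the index $[G_F : G_P]$ is finite: both groups act cocompactly on $\H^d$, with fundamental domains $F$ and $P$ respectively, and the index equals $\Vol(P)/\Vol(F)<\infty$.

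With the hypotheses verified, Theorem~\ref{sub} yields the inequality $\mathrm{rank}(G_F)\le \mathrm{rank}(G_P)$. The final step is to translate ranks into numbers of facets: for a cocompact hyperbolic reflection group whose fundamental polytope is a Coxeter polytope, the canonical Coxeter generators are precisely the reflections in the facets, and they form a minimal generating set, so the rank of the group equals the number of facets of the corresponding fundamental domain. Applying this to both $G_F$ and $G_P$ gives the desired bound.

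There is no real obstacle here; the only points requiring attention are the identification of rank with the number of facets and the verification that $G_F$ is infinite and indecomposable. The substantive work is all concentrated in Theorem~\ref{sub}, which is cited from \cite{subgr}.
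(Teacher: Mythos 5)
Your proposal is correct and follows exactly the route the paper intends: the corollary is stated as a direct specialization of Theorem~\ref{sub} to cocompact hyperbolic reflection groups, with $G=G_F$, $H=G_P$, and rank identified with the number of facets via the standard generating reflections. The hypothesis checks you supply (infiniteness, indecomposability via connectedness of the Coxeter diagram, finiteness of the index via volumes) are the ones implicitly assumed by the paper.
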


A {\it nerve} of a Coxeter group is a simplicial complex with vertices indexed by standard generators; a set of vertices span a simplex if the corresponding reflections 
generate a finite group. In case of equal numbers of facets we have the following result.

\begin{theorem}[\cite{subgr}, Lemma~5.1]
\label{eq}
Let $G$ be an infinite indecomposable Coxeter group, and let $H\subset G$ be a finite index reflection subgroup of the same rank as $G$. Then the nerve of $H$ can be obtained from the nerve of $G$ by deleting some simplices.  
\end{theorem}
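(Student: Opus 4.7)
The plan is to produce an explicit bijection between the vertex sets of the two nerves and then verify that it carries every simplex of the nerve of $H$ to a simplex of the nerve of $G$.

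First I would realize $G$ and $H$ on the Davis (or, in the geometric situation of this paper, the hyperbolic) realization, choosing fundamental chambers $C_G\subset C_H$ so that $C_H$ is tiled by copies of $C_G$ indexed by a transversal of $H\backslash G$. Let $s_1,\dots,s_n$ and $t_1,\dots,t_n$ be the simple reflection systems for $G$ and $H$; equality of ranks gives the same number $n$ of walls on each chamber, and hence the same number of vertices of each nerve. To build the bijection $\sigma:\{1,\dots,n\}\to\{1,\dots,n\}$, for each wall $W^H_j$ of $C_H$ I would take the unique copy of $C_G$ inside $C_H$ adjacent to $W^H_j$ and let $\sigma(j)$ be the label of the wall of that copy which lies inside $W^H_j$, pulled back to $C_G$ via the transversal element. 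Well-definedness is immediate; bijectivity uses the rank equality together with a counting argument over the tiling, or equivalently the sharp form of Theorem~\ref{sub}.

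Next I would show that $\sigma$ sends simplices to simplices. Suppose $\{t_{j_1},\dots,t_{j_k}\}$ spans a simplex of the nerve of $H$, so that $\langle t_{j_1},\dots,t_{j_k}\rangle$ is finite. Then the walls $W^H_{j_1},\dots,W^H_{j_k}$ meet in a face $F$ of $C_H$ whose generic point has $H$-stabilizer equal to this finite group. Because $[G:H]<\infty$, the $G$-stabilizer of the same point is also finite and contains the $H$-stabilizer. A local computation near $F$ identifies this $G$-stabilizer, up to $G$-conjugation, with the standard parabolic subgroup $\langle s_{\sigma(j_1)},\dots,s_{\sigma(j_k)}\rangle$; finiteness of the latter is precisely the condition for the corresponding set to span a simplex of the nerve of $G$.

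The main obstacle is the construction and bijectivity of $\sigma$. A loose definition readily produces a map, but showing that distinct walls of $C_H$ land on distinct walls of $C_G$ is delicate and forces the rank-equality hypothesis to be used in an essential way: without it, Theorem~\ref{sub} only gives a strict inequality, and several walls of $C_H$ could collide on the same wall-type of $C_G$. I would handle this either by a direct volume and incidence bookkeeping in the tiling $C_H=\bigsqcup h\cdot C_G$, or by revisiting the proof of Theorem~\ref{sub} to track the slack in its inequality and show that equality forces the desired bijection. Once $\sigma$ is in place, the simplex-containment step reduces to a local analysis at each face, using the classification of finite reflection groups and the fact that any finite reflection subgroup of a Coxeter group is conjugate to a standard parabolic.
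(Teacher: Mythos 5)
A preliminary remark: the paper contains no proof of Theorem~\ref{eq} --- it is imported verbatim from \cite{subgr}, Lemma~5.1, and only its polytopal consequences (Corollary~\ref{eqp}, Lemma~\ref{esm}) are developed here. So your attempt can only be judged on its own terms, and there it has a genuine gap, concentrated exactly where you locate ``the main obstacle'' but also at a point you dismiss as immediate.

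The map $\sigma$ is not well defined. A wall $W^H_j$ of $C_H$ is in general tiled by facets of \emph{several} copies of $C_G$ (there is no ``unique copy adjacent to $W^H_j$''), and these facets need not carry the same label: if $gC_G$ and $g'C_G$ meet $W^H_j$ along facets of types $i$ and $i'$, one only gets $gs_ig^{-1}=t_j=g's_{i'}g'^{-1}$, i.e.\ that $s_i$ and $s_{i'}$ are conjugate in $G$, which does not force $i=i'$. Already for $G=\widetilde{A}_2$ and $C_H$ the doubled-side triangle (index $4$), each wall of $C_H$ consists of two facets of small chambers of \emph{different} types, since collinear edges through a vertex of valence $6$ alternate in type. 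So the local recipe determines $\sigma(j)$ only up to conjugacy of generators, and this infects the second half of the argument: to see that the face $F=\bigcap_l W^H_{j_l}$ yields the simplex $\{\sigma(j_1),\dots,\sigma(j_k)\}$ in the nerve of $G$, you need the labels read off from the one copy of $C_G$ at a point of $F$ to agree with the labels $\sigma(j_l)$, each of which was extracted from a possibly different copy of $C_G$ elsewhere on the wall $W^H_{j_l}$. That global consistency of the local labellings is essentially the content of the theorem and is nowhere established. Finally, bijectivity of $\sigma$ is deferred to ``a counting argument'' or to ``the sharp form of Theorem~\ref{sub}''; but Theorem~\ref{sub} is a bare rank inequality whose equality case is precisely the statement under proof, so as written this is circular. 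The sound parts of your outline are the overall shape (compare stabilizers in the Tits cone) and the observation that $[\mathrm{Stab}_G(x):\mathrm{Stab}_H(x)]\le[G:H]<\infty$, so finiteness of the two stabilizers is equivalent; the core of the proof, however, is missing.
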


Reformulating Theorem~\ref{eq} in terms of compact Coxeter polytopes, we obtain

\begin{cor}
\label{eqp}
Let $P$ and $F$ be compact hyperbolic Coxeter polytopes of the same dimension. Suppose that $G_P\subset G_F$ and the numbers of facets of $F$ and $P$ are equal. Then $F$ and $P$ are combinatorially equivalent. 
\end{cor}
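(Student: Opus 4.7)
My plan is to reduce the statement to Theorem~\ref{eq} by translating the language of nerves of Coxeter groups into the language of face lattices of hyperbolic Coxeter polytopes, and then exploiting the rigidity of simplicial spheres.

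The first step is to identify the nerve of $G_P$ with a combinatorial invariant of $P$. Since $P$ is a compact hyperbolic Coxeter polytope, it is simple, and by the correspondence recalled in Section~\ref{cox}, the faces of $P$ of codimension $k$ are in bijection with elliptic subdiagrams of $\Sigma(P)$ of order $k$. Translating this to the Coxeter group $G_P$: the vertices of the nerve $N(G_P)$ are the facets of $P$, and a subset of facets spans a simplex if and only if the corresponding reflections generate a finite subgroup, i.e. the facets share a common non-empty face. Thus $N(G_P)$ is precisely the nerve of the covering of $\partial P$ by the closed facets, and because $P$ is a simple $d$-polytope, $N(G_P)$ is isomorphic to the boundary complex of the simplicial polytope dual to $P$. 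In particular, $N(G_P)$ is a simplicial $(d-1)$-sphere whose number of vertices equals the number of facets of $P$. The analogous statement holds for $N(G_F)$.

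Next, I apply Theorem~\ref{eq} with $G=G_F$ and $H=G_P$: the hypotheses $G_P \subset G_F$, $G_F$ infinite indecomposable, and equal ranks are exactly the hypotheses of that theorem, the equal-rank hypothesis being guaranteed by the equal-facet-count assumption. The conclusion is that $N(G_P)$ is obtained from $N(G_F)$ by deleting some simplices (with no vertex deletions, since the vertex sets have the same cardinality and one is a subcomplex of the other on a common vertex set, which one may formalise by identifying each generator of $H$ with the generator of $G$ whose reflecting hyperplane it shares).

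The last step is the rigidity observation: both $N(G_F)$ and $N(G_P)$ are simplicial $(d-1)$-spheres, and $N(G_P)$ is obtained from $N(G_F)$ by removing a (possibly empty) collection of simplices without removing vertices. A proper subcomplex of a triangulated closed $(d-1)$-manifold on the same vertex set cannot itself be a triangulated closed $(d-1)$-manifold, because removing any simplex destroys the pseudomanifold condition somewhere (removing a top-dimensional simplex leaves a $(d-2)$-face with only one coface; removing any lower simplex forces removal of a top simplex as well). Hence no simplex is deleted, $N(G_P)=N(G_F)$, and the dual face lattices of $P$ and $F$ coincide, giving the claimed combinatorial equivalence.

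The one place requiring care is the identification of the vertex sets of $N(G_F)$ and $N(G_P)$ as ``the same'' set when invoking Theorem~\ref{eq}; this is the main conceptual obstacle, but it is handled exactly as in~\cite{subgr}, where each reflection in $H$ is associated to a unique wall of the fundamental chamber that is also a wall of $G$ when the ranks agree. Once this identification is in place, the spherical rigidity argument is immediate and the corollary follows.
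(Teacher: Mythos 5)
Your proof follows the paper's argument essentially verbatim: the paper likewise identifies the nerve of a cocompact reflection group with the face lattice of the simplicial polytope dual to the fundamental chamber, invokes Theorem~\ref{eq}, and then rules out any deleted simplices by the rigidity of triangulated $(d-1)$-spheres, which it isolates as Lemma~\ref{esm}. Your pseudomanifold-condition justification of that last rigidity step is a minor variant of the paper's topological one, but the structure and content of the two proofs coincide.
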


We can reformulate the corollary in terms of simplicial polytopes. Recall that a fundamental polytope of cocompact reflection group is simple. Therefore, 
for a cocompact reflection group $G_F$ its nerve coincides with face-lattice of simplicial polytope $F'$ dual to $F$. Hence, in view of Theorem~\ref{eq}, 
Corollary~\ref{eqp} is equivalent to the following statement. 

\begin{lemma}
\label{esm}
Let $F'$ be a simplicial $d$-polytope, and let $\mathcal F$ be its face-lattice. Then any proper simplicial subcomplex of $\mathcal F$ with the same number of vertices is not a face-lattice of a simplicial $d$-polytope.
\end{lemma}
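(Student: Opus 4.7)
The plan is as follows. The face-lattice $\mathcal F$ of a simplicial $d$-polytope $F'$ may be viewed as its boundary complex: a pure $(d{-}1)$-dimensional simplicial complex PL-homeomorphic to $S^{d-1}$. Suppose $\mathcal F'\subsetneq\mathcal F$ is a proper simplicial subcomplex with the same number of vertices—equivalently, since $\mathcal F'\subset\mathcal F$, with identical vertex sets—and suppose for contradiction that $\mathcal F'$ is itself the face-lattice of some simplicial $d$-polytope $Q'$. Then $\mathcal F'$ is likewise a pure $(d{-}1)$-dimensional simplicial $(d{-}1)$-sphere.

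Since both $\mathcal F$ and $\mathcal F'$ are pure of the same dimension and $\mathcal F'\subset\mathcal F$, the set of facets of $\mathcal F'$ is a subset of that of $\mathcal F$. If these two facet sets were equal, purity would force $\mathcal F=\mathcal F'$ (each complex equals the downward closure of its facets), contradicting properness. Hence at least one facet of $\mathcal F$ is not a face of $\mathcal F'$.

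The core of the argument is a comparison of fundamental classes. Regarding $[\mathcal F']$ as an integer chain in the simplicial chain complex $C_\ast(\mathcal F;\mathbb Z)$, it is a cycle in degree $d{-}1$. Because $\mathcal F$ has dimension $d{-}1$, we have $C_d(\mathcal F)=0$, so there are no $(d{-}1)$-boundaries, and every $(d{-}1)$-cycle represents a distinct element of $H_{d-1}(\mathcal F)\cong\mathbb Z$, a group generated by the fundamental class $[\mathcal F]=\sum_\sigma\epsilon_\sigma\sigma$ whose support is the entire set of facets of $\mathcal F$ with coefficients $\epsilon_\sigma=\pm 1$. Consequently $[\mathcal F']=k\,[\mathcal F]$ as chains for some $k\in\mathbb Z$. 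But $[\mathcal F']$ is supported on the strictly smaller set of facets of $\mathcal F'$ with $\pm 1$ coefficients; comparing supports forces $k=0$ and $[\mathcal F']=0$, which is absurd since a $(d{-}1)$-sphere carries a nonzero fundamental class.

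I do not expect a serious obstacle. The only care required is in verifying purely formal points: that the face-lattice of a simplicial polytope is a pure PL-sphere of dimension $d{-}1$; that ``same number of vertices'' combined with $\mathcal F'\subset\mathcal F$ forces identical vertex sets; and that $C_d$ of a $(d{-}1)$-dimensional simplicial complex vanishes. Once these are in place, the statement is essentially a one-line consequence of the fact that top-dimensional homology of a sphere is generated by a chain with full support.
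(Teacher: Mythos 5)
Your proof is correct. It shares its starting point with the paper's argument --- both identify $\mathcal F$ and $\mathcal F'$ with triangulations of $S^{d-1}$ via the boundary complexes of the respective simplicial polytopes --- but the way you derive the contradiction is genuinely different. The paper argues that a proper containment of one triangulated $(d-1)$-sphere in another forces the ambient one to have strictly more vertices, contradicting the equal-vertex-count hypothesis; this step is left essentially unjustified there (implicitly it is an invariance-of-domain argument: a subcomplex homeomorphic to $S^{d-1}$ is open and closed in $|\mathcal F|\cong S^{d-1}$, hence everything). You instead compare fundamental cycles: since $C_d(\mathcal F)=0$, the group of $(d-1)$-cycles of $\mathcal F$ is $\mathbb Z\cdot[\mathcal F]$ with $[\mathcal F]$ supported on \emph{all} facets, so the fundamental cycle of $\mathcal F'$, being supported on a proper subset of facets, must be zero --- absurd. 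This buys you two things: every step is a completely elementary computation in simplicial chains, with no appeal to invariance of domain or to the unproven vertex-count claim; and your argument never actually uses the hypothesis that the vertex sets agree, so you prove the stronger statement that \emph{no} proper subcomplex of $\mathcal F$ is the face-lattice of a simplicial $d$-polytope. All the formal points you flag (purity, facets of $\mathcal F'$ being facets of $\mathcal F$, equal facet sets forcing equality of the complexes) check out as you state them.
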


\begin{proof}
Indeed, suppose that $\mathcal F_1$ is a proper subcomplex of $\mathcal F$, and $\mathcal F_1$ is a face-lattice of a simplicial polytope $F_1'$. Then both $\mathcal F_1$ and $\mathcal F$ can be thought as triangulations of a $(d-1)$-sphere. But since $\mathcal F$ contains $\mathcal F_1$ and they do not coincide, the number of vertices of $\mathcal F$ should exceed the number of vertices of $\mathcal F_1$, so we come a contradiction.
\end{proof}

Now we are able to list some properties of decompositions of polytopes into smaller ones.
Lemma~\ref{pr_s} concerns the case of a finite index subgroup (i.e., decomposition), and Lemma~\ref{pr_d} concerns the case of dissection.

For any polytope $P$ we denote by $\Vol(P)$ and  by $|P|$  its volume and the number of its facets respectively.
By a {\it standard subgroup} of $G_P$ we mean a subgroup generated by reflection with respect to some of the facets of $P$.

\begin{lemma}
\label{pr_s}
Let $P$ and $F$ be compact hyperbolic Coxeter polytopes of the same dimension. Suppose that $G_P\subset G_F$. Then

$(1)$ $|F|\le |P|$;

$(2)$ if  $|F|=|P|$, then $P$ is combinatorially equivalent to $F$;

$(3)$ every standard finite subgroup of $G_P$ is a subgroup of some standard finite subgroup of $G_F$;

$(4)$ $\Vol(P)=\Vol(F)\left[G_F:G_P\right]$;
\end{lemma}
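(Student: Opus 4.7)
Parts (1) and (2) are essentially immediate consequences of results already collected above. For (1), we apply Corollary~\ref{subp} directly. For (2), we apply Corollary~\ref{eqp} directly.

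For (4), the strategy is the standard argument: $P$ is a fundamental domain for $G_P$ and $F$ is a fundamental domain for $G_F$, so picking a set of right coset representatives $g_1,\ldots,g_N$ with $N=[G_F:G_P]$, the translates $g_1F,\ldots,g_NF$ tile $P$ up to a set of measure zero. Taking hyperbolic volume yields $\Vol(P)=N\Vol(F)=[G_F:G_P]\Vol(F)$.

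Part (3) is the step that needs a bit of care. The plan is to identify standard finite subgroups with vertex stabilizers. Since $P$ is a compact hyperbolic Coxeter polytope it is simple, and the facets through a vertex $v$ of $P$ correspond to an elliptic subdiagram of order $d$ in $\Sigma(P)$; hence every maximal standard finite subgroup $H_v\subset G_P$ is the pointwise stabilizer $\mathrm{Stab}_{G_P}(v)$ of such a vertex $v\in P$, and every standard finite subgroup is contained in one of these. Now the inclusion $G_P\subset G_F$ gives $H_v\subset\mathrm{Stab}_{G_F}(v)$. Since $H_v$ is nontrivial, $v$ is not in the interior of any translate $gF$ (such interiors have trivial $G_F$-stabilizer), so $v$ lies in some face of a translate $gF$; and since $\mathrm{Stab}_{G_F}(v)$ is finite (being an overgroup of finite index of $H_v$ inside a discrete reflection group) the face must be a vertex of $gF$, say $v=gw$ for a vertex $w$ of $F$. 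The stabilizer $\mathrm{Stab}_{G_F}(v)=g\,\mathrm{Stab}_{G_F}(w)\,g^{-1}$ is the conjugate of the standard finite subgroup $\mathrm{Stab}_{G_F}(w)\subset G_F$, so $H_v$ sits inside the standard finite subgroup $\mathrm{Stab}_{G_F}(w)$ after conjugating by $g^{-1}\in G_F$. Since every standard finite subgroup of $G_P$ is a subgroup of some $H_v$, the claim follows.

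The main obstacle is precisely the identification of standard subgroups with vertex stabilizers in part (3); once one knows that $v$ must be a vertex of some translate of $F$ (rather than lying on a higher-dimensional face), the rest is mechanical. Parts (1), (2), and (4) are essentially free from the results already available.
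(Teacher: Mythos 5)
Your handling of parts (1), (2) and (4) matches the paper's: the paper derives (1) and (2) from Corollaries~\ref{subp} and~\ref{eqp} and declares (4) evident, and your coset--tiling computation is precisely that evident argument. The real divergence is in (3): the paper does not prove it in-house but simply cites~\cite{simplex}, whereas you supply a self-contained geometric argument via vertex stabilizers. That is a legitimate and essentially standard route, and what it buys is independence from the external reference; but two of your justifications are off. First, the inference ``$\mathrm{Stab}_{G_F}(v)$ is finite, so the face must be a vertex of $gF$'' is a non sequitur: every point stabilizer in a discrete cocompact reflection group is finite (by proper discontinuity), whatever the dimension of the face containing the point, so finiteness cannot force $v$ to be a vertex. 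The correct reason is that $H_v$ is generated by reflections in $d$ hyperplanes whose common fixed set is the single point $v$; since $\mathrm{Stab}_{G_F}(v)$ contains $H_v$ and fixes pointwise the face of $gF$ whose relative interior contains $v$, that face must be $0$-dimensional. (Alternatively you can bypass the issue: whichever face of $gF$ contains $v$ in its relative interior, its stabilizer is generated by the reflections in the facets of $gF$ through it and is already a conjugate of a standard finite subgroup of $G_F$, which is all you need.) Your parenthetical reason for finiteness of $\mathrm{Stab}_{G_F}(v)$ is likewise circular; proper discontinuity is the clean justification.

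Second, what your argument actually delivers is $H_v\subset g\,\mathrm{Stab}_{G_F}(w)\,g^{-1}$, i.e.\ containment in a \emph{conjugate} of a standard finite subgroup of $G_F$, and conjugates of standard subgroups are in general not standard. So statement (3) holds only up to conjugacy in $G_F$; this is how the paper uses it (comparing isomorphism types of vertex stabilizers in the proof of Theorem~\ref{ess-t}) and presumably how~\cite{simplex} states it, but your phrase ``after conjugating by $g^{-1}$'' glosses over the discrepancy with the literal wording of the lemma and should be made an explicit caveat. Neither point is fatal; both are repairable in a line each.
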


\begin{proof}
Statements $(1)$ and $(2)$ follow from Corollaries~\ref{subp} and~\ref{eqp} respectively. Statement~$(4)$ is evident, and $(3)$ is proved in~\cite{simplex}. 
\end{proof}

\begin{lemma}
\label{pr_d}
Let $P$ be a compact hyperbolic Coxeter polytope, and suppose that $P$ can be dissected by a hyperplane $\alpha$ into two Coxeter polytopes $P_1$ and $P_2$. Then 

$\mathrm{(I)}$  $|P_i|\le |P|$, for $i=1,2$;

$\mathrm{(II)}$ if $|P_1|=|P|$, then $P_1$ is combinatorially equivalent to $P$.

$\mathrm{(III)}$ every dihedral angle of $P$ either coincides with a dihedral angle of some of $P_i$, or is tessellated by one of angles $P_1$ and one of angles of $P_2$;

$\mathrm{(IV)}$ $\Vol(P)=\Vol(P_1)+\Vol(P_2)$.
\end{lemma}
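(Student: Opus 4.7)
My plan is to handle the four items in the order (IV), (III), (I), (II), since (IV) and (III) are essentially direct unpackings of the dissection definition, whereas (I) and (II) are the substantive statements that require the reflection-subgroup machinery of~\cite{subgr}.

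Part (IV) is immediate: the dissecting hyperplane $\alpha$ has measure zero in $\H^d$, so the decomposition $P = P_1 \cup P_2$ with $P_1 \cap P_2 \subseteq \alpha$ gives $\Vol(P) = \Vol(P_1) + \Vol(P_2)$. For (III), I would take any codim-2 face $F = f \cap f'$ of $P$ carrying dihedral angle $\pi/n$ and analyze the three possible positions of $\alpha$ with respect to $F$. If $F \cap \alpha = \emptyset$, then $F$ lies inside one of the pieces $P_i$, which inherits the angle verbatim. If $\alpha$ meets the relative interior of $F$, then $\alpha$ must cut both $f$ and $f'$ through their interiors, so condition~(1) of the dissection forces $\alpha \perp f$ and $\alpha \perp f'$, and the angle $\pi/n$ persists on both sides. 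If $\alpha \supseteq F$, condition~(2) supplies integer submultiple angles $\pi/k_1 = \angle(\alpha,f)$ and $\pi/k_2 = \angle(\alpha,f')$ with $\pi/k_1 + \pi/k_2 = \pi/n$, exhibiting the dihedral angle of $P$ as tessellated by one angle of $P_1$ and one of $P_2$.

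For (I) and (II), the key step is to establish the reflection-subgroup inclusion $G_P \subseteq G_{P_i}$ for each $i$. Granted this, Corollary~\ref{subp} immediately yields $|P_i| \le |P|$, and in the equality case Corollary~\ref{eqp} gives the combinatorial equivalence required in (II). To prove the inclusion, I would argue that the $G_{P_i}$-tessellation of $\H^d$ partitions $P$ into finitely many $G_{P_i}$-tiles. Starting from $P_i \subset P$, the reflection $r_\alpha \in G_{P_i}$ (a mirror of $G_{P_i}$ since $\alpha$ supports a facet of $P_i$) carries $P_i$ to a congruent copy on the $P_{3-i}$-side of $\alpha$. The two dissection conditions are exactly the local compatibility conditions forcing each successive reflection to stay inside $P$: condition~(1) because every facet of $P$ that is properly cut by $\alpha$ is fixed setwise by $r_\alpha$, and condition~(2) because the $\pi/k$ structure at codim-2 faces of $P$ lying on $\alpha$ closes up locally under iterated reflections. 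Iterating and using compactness of $P$ together with the discreteness of $G_{P_i}$, one obtains a finite tessellation of $P$ by $G_{P_i}$-copies of $P_i$, which is precisely the statement $G_P \subseteq G_{P_i}$.

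The main obstacle will be verifying the local closure of this iterated tiling inside $P$, specifically at codim-2 faces of $P$ lying on $\alpha$ (the case~(2) configuration of (III)). There one must check that the reflections generated by $r_\alpha$ together with the two facets of $P$ meeting at such a face assemble into a finite dihedral subgroup of $G_{P_i}$ respecting the ambient $\pi/n$ angle of $P$; equivalently, that the submultiple angles $\pi/k_1,\pi/k_2$ appearing in (III) are arithmetically compatible with $\pi/n$, which is exactly what condition~(2) encodes. Once local closure is established, the global assertion that $P$ is a finite union of $G_{P_i}$-tiles follows by compactness and discreteness, and Corollaries~\ref{subp} and~\ref{eqp} close out (I) and (II).
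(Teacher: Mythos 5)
Your treatments of (III) and (IV) are fine (the paper simply calls these evident), but the core of your argument for (I) and (II) rests on a claim that is false: the inclusion $G_P\subseteq G_{P_i}$ does \emph{not} hold for a general dissection. A dissection writes $P=P_1\cup P_2$ where $P_1$ and $P_2$ are typically \emph{not} congruent and $P_{3-i}$ is not a union of $G_{P_i}$-translates of $P_i$; the reflected copy $r_\alpha(P_i)$ has no reason to lie inside $P$, so the iterated-tiling argument never gets started. Conditions (1) and (2) of the dissection only guarantee that each piece is itself a Coxeter polytope; they do not force one piece to tile the whole. Indeed, if $G_P\subseteq G_{P_1}$ always held, then $\Vol(P)=\Vol(P_1)+\Vol(P_2)$ would have to be an integer multiple of $\Vol(P_1)$, which already fails for the Makarov-type gluings of two non-congruent polytopes that motivate this whole section. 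Consequently your derivations of (I) from Corollary~\ref{subp} and of (II) from Corollary~\ref{eqp} both collapse.

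The way (I) actually goes is by contradiction and is more elementary than you suggest: every facet of $P_i$ is either the facet supported on $\alpha$ or of the form $f\cap P_i$ for a facet $f$ of $P$, so $|P_i|\le |P|+1$; if $|P_i|>|P|$ then \emph{every} bounding hyperplane of $P$ bounds $P_i$, and only in that degenerate situation does one get $G_P\subseteq G_{P_i}$, which then contradicts Corollary~\ref{subp}. For (II), Corollary~\ref{eqp} is not available (it needs the subgroup inclusion), and one must argue combinatorially: when $|P_1|=|P|=n$, exactly one facet $\beta$ of $P$ fails to contribute a facet to $P_1$, the candidate equivalence sends $\beta$ to the facet of $P_1$ on $\alpha$ and each other facet $f$ to $f\cap P_1$, and one checks directly (it suffices to look at edges of $P$) that a collection of facets of $P$ has nonempty intersection meeting $\alpha$ if and only if it meets $\beta$. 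You need to replace your subgroup-based argument with something along these lines.
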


\begin{proof}
To prove $\mathrm{(I)}$, consider the group $G_{P_i}$. If $P_i$ has more facets than $P$ does, then $\alpha$ intersects interior of all the facets of $P$, 
so $G_{P_i}$ contains $G_{P}$. Since $P_i$ is a Coxeter polytope, this contradicts  Corollary~\ref{subp}, so $\mathrm{(I)}$ is proved.     
%and $(II)$ follow from Corollaries~\ref{subp} and~\ref{eqp} respectively. 
Statements~$\mathrm{(III)}$ and~$\mathrm{(IV)}$ are evident.

To prove $\mathrm{(II)}$,
notice first that $P_2$ has a unique facet $\beta$ which is not a facet of $P_1$. 
Let $f_1, \dots, f_{n-1},\beta$ be all the facets of $P$ and $\bar f_i$ be a hyperplane containing $f_i$. 
It is sufficient to prove that for any subset $I\in \{1,\dots, n-1\}$ an intersection $\bigcap\limits_{i\in I} f_i$  intersects $\alpha$ 
if and only if it intersects $\beta$.
Without loss of generality we may consider  maximal intersections only, namely the intersections of $d-1$ facets, or, in other words, the edges.
Notice also, that any edge of $P$ which does not belong to $\beta$ is an edge of $P_1$ (or an edge lying on a line that contains an edge of $P_1$).

Let $AB$ be any edge of $P_2$ such that $A\in \alpha$ and $B\notin \alpha$. Then $B\in \beta$ (otherwise $B$ is an intersection of $d$ faces of $P_1$, 
so $B\in P_1$,  by  Andreev's  theorem~\cite{An0}).
Hence, $AB\subset P_1$, which implies $AB\subset \alpha$ in contradiction to the assumption.   
This implies that if some edge of $P$ intersects $\alpha$ then it intersects $\beta$.
On the other hand, if some edge $XY$ of $P$ intersects $\beta $ ($X\in \beta$)  and $XY\not\subset \beta$,
then $XY\subset l$ where $l$ is a line containing some edge of $P_1$, so $l$ intersects $\alpha$ and the condition   $\mathrm{(II)}$ is proved. 
\end{proof}

\medskip

\noindent{\bf An algorithm for dissections.}
For any given Coxeter $d$-polytope $P$, Lemma~\ref{pr_d} allows to check whether $P$ can be dissected into two Coxeter polytopes. 

To check if $P$ can be dissected in ``orthogonal way'' (i.e., by a hyperplane orthogonal to all facets of $P$ which it does meet)
we do the following. For every $d$-tuple of facets $(f_1,\dots,f_d)$ of $P$ (except those composing vertices of $P$) we draw a hyperplane orthogonal to all the facets $f_1,\dots,f_d$ (it is unique), and look at the angles of $\alpha$ with other facets. If all of them are right for some $d$-tuple of facets, we get a dissection. 

To check if $P$ can be dissected in ``non-orthogonal way'', for every $(d-2)$-face of $P$ we draw a hyperplane $\alpha$ dissecting the corresponding dihedral angle into two, each of those is integral divisor of $\pi$, and compute the angles of $\alpha$ with the remaining facets of $P$. Clearly, the number of such hyperplanes is finite since one of the new dihedral angles is not less than half of the initial one. If all the new angles are integral divisors of $\pi$, we obtain a dissection.

However, conditions $\mathrm{(I)}-\mathrm{(IV)}$ of Lemma~\ref{pr_d} usually give a much shorter way to understand if $P$ can be dissected or not. Suppose that $P$ is a union of $P_1$ and $P_2$. Due to $\mathrm{(I)}$ and existing classifications of polytopes with small number of facets, we usually know the complete list of candidates to be $P_1$ and $P_2$ (the only exceptions are the simplex truncated  $5$ times, and $6$-polytope with $34$ facets we discuss later). Condition $\mathrm{(II)}$ further reduces the number of candidates. Then we glue pairs of candidates along congruent facets with appropriate dihedral angles and look if we get $P$. Condition $\mathrm{(IV)}$ also allows to reduce the number of pairs in consideration when we know volumes of polytopes (e.g. in even-dimensional case due to Poincar\'e formula~\cite[Part I, Chapter 7]{29} or for simplices~\cite{JKRT1}).     

\medskip

Similarly, Lemma~\ref{pr_s} helps to check if $P$ is a fundamental domain of a subgroup of some group $G_F$. However, here we do not have an algorithm providing the answer. For example, we cannot decide if $6$-polytope with $34$ facets found by Bugaenko~\cite{Bu2} (the corresponding set of normal vectors is reproduced in~\cite[Table~2.1]{All}) is essential or not: we do not know the lists of $6$-polytopes with $10$ or more facets.

To simplify the considerations concerning dissections we will also use the following technical lemma.

%For any abstract Coxeter diagram $\Sigma$ denote by $\Sigma_{\mathrm{odd}}$ the diagram obtained from $\Sigma$ by removing of all dotted edges and all edges with even labels

For any abstract Coxeter diagram $\Sigma$ denote by $\Sigma_{\mathrm{odd}}$ the diagram obtained from $\Sigma$ by removing all nodes which are not incident to
any edge labeled by an odd number $k$ (or to a simple or triple edge) 
(see Fig.~\ref{odd}).

\begin{figure}[!h]
\begin{center}
\begin{tabular}{rlp{1.4cm}rl}
$\Sigma$
&
\psfrag{10}{\scriptsize $10$}
\epsfig{file=./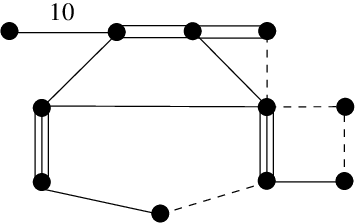,width=0.3\linewidth}
&&
$\Sigma_{\mathrm{odd}}$
&
\epsfig{file=./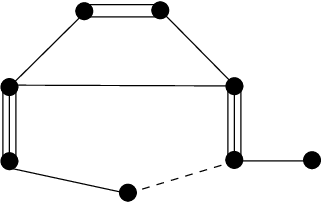,width=0.27\linewidth}
\end{tabular}
\caption{The diagram on the right is $\Sigma_{\mathrm{odd}}$ for the abstract diagram $\Sigma$ on the left}
\label{odd}
\end{center}
\end{figure}

\begin{lemma}
\label{dis_subd}
Suppose that $P$ can be dissected into two Coxeter polytopes $P_1$ and $P_2$,
both in the list of currently known polytopes. 
Then for any subdiagram $\Sigma'\subset \Sigma(P_1)$ such that $\Sigma'_{\mathrm{odd}}=\Sigma'$ 
the inclusion $\Sigma'\subset \Sigma(P)$ holds.
\end{lemma}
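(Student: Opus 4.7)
The plan is to exploit the fact that the facets of $P_1$ different from the cutting facet inherit their supporting hyperplanes and outer normals directly from $P$, while handling the cutting facet separately via the ``odd'' hypothesis.

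More precisely, let $\alpha$ denote the dissecting hyperplane and let $\beta\subset P_1$ be the facet of $P_1$ lying in $\alpha$. For every other facet $g$ of $P_1$, convexity of $P_1$ forces the supporting hyperplane of $g$ to contain a unique facet of $P$; and since $P_1$ and $P$ lie on the same side of this hyperplane, the outer unit normals coincide. Therefore the Gram matrix of $P_1$, restricted to rows and columns indexed by facets other than $\beta$, agrees with the corresponding submatrix of the Gram matrix of $P$, and we obtain a label-preserving injection
$$
\phi:\Sigma(P_1)\setminus\{\beta\}\hookrightarrow\Sigma(P).
$$
Under $\phi$, any subdiagram of $\Sigma(P_1)$ that does not contain the node $\beta$ becomes, as a labelled diagram, a subdiagram of $\Sigma(P)$.

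It therefore suffices to show $\beta\notin\Sigma'$. By the assumption $\Sigma'_{\mathrm{odd}}=\Sigma'$, every node of $\Sigma'$ is incident inside $\Sigma'$ to some odd-labelled edge, so it is enough to establish the local claim that $\beta$ is not incident to any odd-labelled edge in $\Sigma(P_1)$. Using conditions (1)--(2) in the definition of dissection, the dihedral angle between $\beta$ and another facet $g$ of $P_1$ is either $\pi/2$ (when $\alpha$ meets the hyperplane of $g$ transversely in the interior of the corresponding facet of $P$) or $\pi/m$, where $\alpha$ contains a codimension-two face of $P$ with dihedral angle $\pi/k$ split as $\pi/m+\pi/m'=\pi/k$, with $\pi/m'$ the complementary angle realized in $P_2$.

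A clean parity argument disposes of the case when $k$ is odd: solving $m'=mk/(m-k)$ with $m$ odd would make the numerator $mk$ odd while the denominator $m-k$ is even, so $m'$ cannot be an integer. Hence an odd label $m$ on an edge incident to $\beta$ can occur only when $\alpha$ splits a $P$-angle $\pi/k$ with $k$ even. This is the main obstacle: the Diophantine equation $1/m+1/m'=1/k$ admits such solutions in principle (e.g.\ $(m,m',k)=(3,6,2)$ or $(5,20,4)$), and ruling them out requires inspecting the finitely many pairs $(P_1,P_2)$ appearing in the known list of compact Coxeter polytopes, verifying in each instance that no splitting of an even $P$-angle by $\alpha$ produces an odd $m$. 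Once the local claim is secured, $\Sigma'\subset\Sigma(P_1)\setminus\{\beta\}$ and the injection $\phi$ identifies $\Sigma'$ with the required subdiagram of $\Sigma(P)$.
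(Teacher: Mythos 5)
Your overall strategy is the same as the paper's: everything reduces to showing that the node $\beta$ of $\Sigma(P_1)$ corresponding to the cutting hyperplane $\alpha$ is not incident to any odd-labelled edge, so that $\Sigma'_{\mathrm{odd}}=\Sigma'$ forces $\beta\notin\Sigma'$ and the remaining nodes of $\Sigma'$ are honest facets of $P$ (with the same supporting hyperplanes, hence the same Gram matrix entries). Your reduction to this local claim and your parity argument for odd $k$ are correct. The problem is that you stop exactly where the lemma still needs to be proved: in the even-$k$ case you concede that solutions such as $(m,m',k)=(3,6,2)$ or $(5,20,4)$ exist ``in principle'' and defer to an unspecified inspection of all pairs $(P_1,P_2)$ from the known list. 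That inspection is never carried out, so the local claim --- and hence the lemma --- is not established.

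The missing ingredient is that the hypothesis places $P_2$, not only $P_1$, in the list of currently known polytopes, so the complementary angle $\pi/m'$ is itself a dihedral angle of a known polytope and therefore lies in the finite set $\{\pi/2,\pi/3,\pi/4,\pi/5,\pi/8,\pi/10\}$. With both $m$ and $m'$ restricted to $\{2,3,4,5,8,10\}$, the equation $\pi/m+\pi/m'=\pi/k$ (for an integer $k\ge 2$) has only the solutions $m=m'\in\{4,8,10\}$; in particular $m$ is always even, your hypothetical counterexamples (which would need $m'=6$ or $m'=20$) are excluded outright, and no case-by-case analysis of polytope pairs is required. This single observation is how the paper closes the argument; adding it turns your proposal into a complete proof.
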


\begin{proof}
 The dihedral angles of all currently known polytopes are in the set $\pi/2$, $\pi/3$, $\pi/4$, $\pi/5$,
$\pi/8$ and $\pi/10$. Let $\alpha$ be the dissecting hyperplane and
suppose that  $\alpha$ dissects some dihedral angle of $P$ formed by facets $\alpha_1$ and $\alpha_2$. 
Clearly,  $\angle (\alpha_1,\alpha_2)=\angle (\alpha,\alpha_1)+\angle (\alpha,\alpha_2)$ (where ($\angle(\alpha,\beta)$ stays for the angle formed by $\alpha$ and $\beta$). 
This is possible (for the values listed above) in the following three cases only: 
$$\angle(\alpha,\alpha_1)=\angle(\alpha,\alpha_2)=\pi/4,\ \pi/8 \ \text{or} \ \pi/10.$$
This implies that if
$f_1$ and $f_2$ are facets of $P_1$ forming an angle $\pi/(2k+1)$ for some $k\in \Z$ then neither $f_1$ nor $f_2$ coincides with $\alpha$.
So, both $f_1$ and $f_2$ are facets of $P$.

Now, since $\Sigma'_{\mathrm{odd}}=\Sigma'$, all nodes of $\Sigma'$ correspond to facets of $P$, which proves the lemma.
\end{proof}

\begin{example}
Let $P$ be the $5$-dimensional polytope with $8$ facets shown on Fig.~\ref{ex-subd}.
We use Lemma~\ref{dis_subd} to show that $P$ can not be dissected. 

\begin{figure}[!h]
\begin{center}
\epsfig{file=./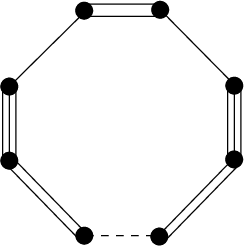,width=0.16045\linewidth}
\caption{ $5$-dimensional polytope with $8$ facets }
\label{ex-subd}
\end{center}
\end{figure}

Suppose that $P$ is dissected into some polytopes $P_1$ and $P_2$. Then by Property~$\mathrm{(I)}$
of Lemma~\ref{pr_d} each of $P_1$ and $P_2$ has at most 8 facets. Hence, both $P_1$ and $P_2$ are in the list of currently known polytopes,
and we may apply  Lemma~\ref{dis_subd}. 
Notice that each diagram of a $5$-polytope with at most 8 facets contains a subdiagram $\Sigma'$ such that $\Sigma'_{\mathrm{odd}}$ is connected
and $\Sigma'$ contains at least 3 simple edges. On the other hand, $\Sigma(P)$ contains only two simple edges. 
The contradiction to Lemma~\ref{dis_subd} shows that the dissection is impossible.
\end{example}

\subsection{Proof of Theorem~\ref{ess-t}}
\label{ess_proof}
We need to show that polytopes listed in Table~\ref{ess-list} are essential and that all the other known polytopes (except two listed in  Question~\ref{qop})
could be obtained from these ones.

\medskip

\noindent
{\bf 1.}
First, we show how to glue the remaining known polytopes from those listed in Table~\ref{ess-list} and Question~\ref{qop}. 

{\bf In dimensions 6--8} there is a unique known polytope not listed above (see~\cite[Table~4.9]{T}), it is the $6$-polytope shown in Fig.~\ref{rest6}, left. 
This polytope is doubled polytope from Fig.~\ref{rest6}, right, where the latter should be reflected in the facet marked white. 

\begin{figure}[!h]
\begin{center}
\psfrag{10}{\scriptsize $10$}
\epsfig{file=./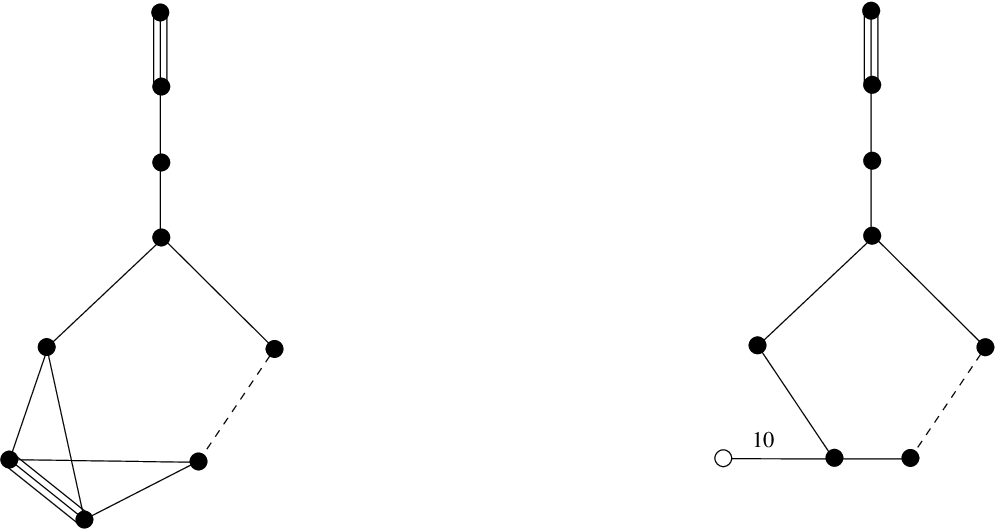,width=0.545\linewidth}
\caption{$6$-polytopes: the left one consists of two copies of the right one}
\label{rest6}
\end{center}
\end{figure}

{\bf In dimension 5} all the remaining known polytopes are once truncated simplices (i.e., simplicial prisms) or twice truncated simplices 
(see \cite{K} and \cite[Table~4.10]{T}). 
It is shown in~\cite[page~20]{T} that if a simplex is not truncated in orthogonal way then it is not essential. However, one of the simplices truncated in orthogonal way 
(i.e., from the list~\cite[Table~A.5]{S}) is not essential either: the polytope shown in Fig.~\ref{rest5}, left, consists of two copies of the prism to the right, 
the copy to glue may be obtained by reflecting in the facet marked white.

\begin{figure}[!h]
\begin{center}
\epsfig{file=./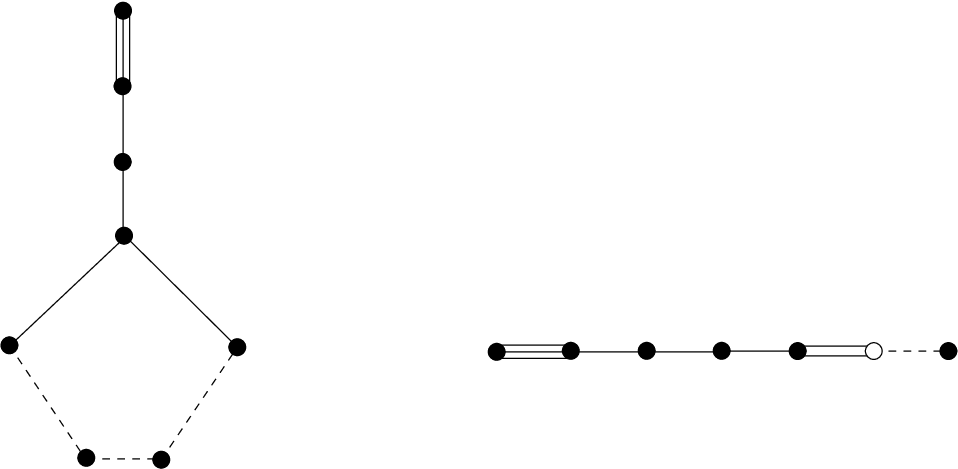,width=0.545\linewidth}
\caption{$5$-polytopes: the left one consists of two copies of the prism to the right}
\label{rest5}
\end{center}
\end{figure}
       
{\bf In dimension 4} the situation is more involved due to larger number of polytopes and combinatorial types. The simplex not listed in  Table~\ref{ess-list} ($[5,3,3^{1,1}]$ in the notation of~\cite{JKRT1}) can be glued according to~\cite[Section~4]{JKRT2} (it is a double of the simplex with both triple and double edges in the diagram). 
All the prisms with one ``right'' base are contained in Table~\ref{ess-list}. The $5$ missed Esselmann polytopes~\cite[Theorem~1.1]{Ess2} are fundamental chambers for subgroups of index $2$ or $4$ of the groups generated by reflections in the facets of two Esselmann polytopes listed in the table. 

We are left to consider $4$-polytopes with at least $7$ facets. The twice truncated simplices can be processed as in dimension $5$. The only twice truncated simplex truncated in orthogonal way (i.e., from the list~\cite{S}) that is not essential is shown on Fig.~\ref{rest4t} to the left, it can be tessellated by $3$ copies of the prism shown 
on Fig.~\ref{rest4t} to the right (the tessellation can be obtained by reflecting the polytope in two facets marked white and gluing all the three copies together).       

\begin{figure}[!h]
\begin{center}
\psfrag{r2}{\footnotesize $\rho_2$}
\epsfig{file=./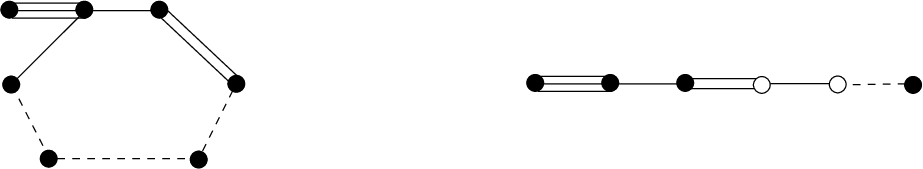,width=0.545\linewidth}
\caption{Twice truncated $4$-simplex to the left consists of three copies of the prism to the right}
\label{rest4t}
\end{center}
\end{figure}

The polytopes with $7$ facets whose diagrams have two multi-multiple edges and one dotted edge clearly tessellate $2$ or $1$ polytopes drawn in the same row 
(see~\cite[Table~4.11]{T}): one should reflect these polytopes in the facets having exactly one non-right angle $\pi/8$ or $\pi/10$. 
The polytopes shown on  Fig.~\ref{rest4s},left and Fig.~\ref{rest4s}, middle ones, consist of two copies of one shown on  Fig.~\ref{rest4s} to the right. 
One shown in the middle of  Fig.~\ref{rest4s} can be obtained by reflecting the right one. However, the polytope shown on  Fig.~\ref{rest4s}, left, does not define 
an index two subgroup. The facet of the initial polytope marked  white is a $3$-simplex whose Coxeter diagram has a symmetry. To get the left polytope we should glue a copy along this facet, not reflected one but symmetric to it.

\begin{figure}[!h]
\begin{center}
\psfrag{r2}{\footnotesize $\rho_2$}
\epsfig{file=./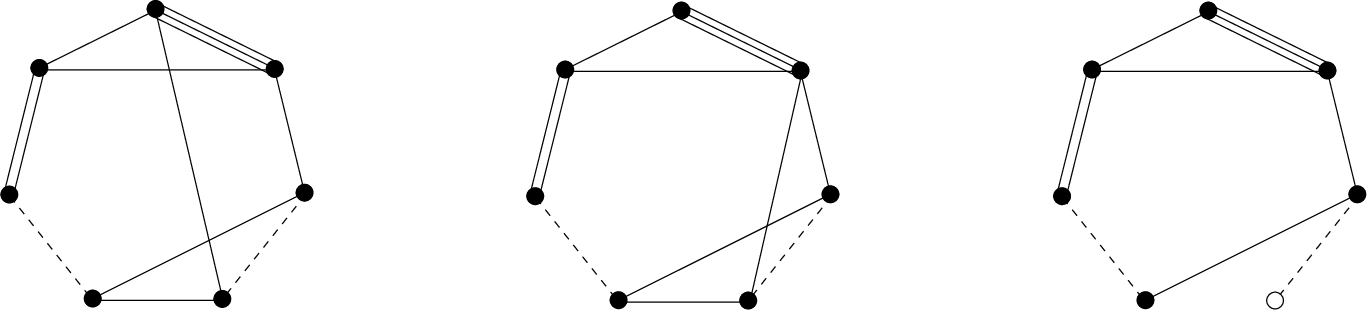,width=0.745\linewidth}
\caption{$4$-polytopes to the left and in the center consist of two copies of the polytope to the right}
\label{rest4s}
\end{center}
\end{figure}

Amongst the three times truncated simplices (they all have 8 facets) only two are not  listed in  Table~\ref{ess-list}: these are the polytopes shown on  
Fig.~\ref{rest4s3} to the left.
Each of these two polytopes can be obtained by gluing of two copies of the corresponding polytope shown on  Fig.~\ref{rest4s3}, right, 
where the latter should be glued by the facet marked white. 

\begin{figure}[!h]
\begin{center}
\psfrag{r2}{\footnotesize $\rho_2$}
\epsfig{file=./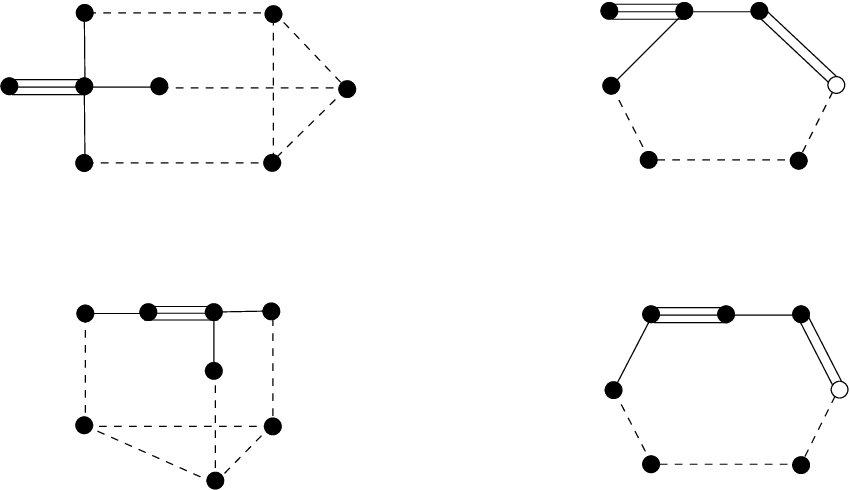,width=0.545\linewidth}
\caption{Each of two $4$-polytopes to the left consists of two copies of the corresponding polytope to the right}
\label{rest4s3}
\end{center}
\end{figure}

\bigskip

\noindent
{\bf 2.}
To prove that all the polytopes listed in the Table~\ref{ess-list} are essential we use Lemmas~\ref{pr_s},~\ref{pr_d}, and~\ref{dis_subd}.
As above, we go through all combinatorial types by increasing complexity and decreasing dimension. Notice that we should also consider decompositions of polytopes into non-essential polytopes.

The main consideration is that for any polytope in   the Table~\ref{ess-list} we know the complete list of polytopes of the same dimension
with smaller number of facets or with the same combinatorics. Indeed, we know all $d$-polytopes with at most $d+3$ facets; it is shown in~\cite{dplus4} 
that there exists a unique $7$-polytope with $7+4=11$ facets, and all the other polytopes from  Table~\ref{ess-list} are three times truncated simplices,
the complete list of which is also known due to~\cite{S}.

So, we have finitely many possibilities for the parts of dissection or for a fundamental chamber of a subgroup.
Lemmas~\ref{pr_s} and~\ref{pr_d} hugely reduce the number of possibilities.

{\bf In  dimensions 6--8} nothing is left after applying these lemmas. 

{\bf In dimension 5}, using Lemmas~\ref{pr_s} and~\ref{pr_d} together with  Lemma~\ref{dis_subd} 
we reduce the problem to the following: is the group generated by the polytope shown on Fig.~\ref{5left} maximal or not?
Denote this polytope by $P_5$.
To see that $G_{P_5}$ is maximal, we compare the distances between the disjoint facets in $P_5$ and in all possible candidates to generate a larger group.
Namely, each of the polytopes satisfying the conditions of Lemma~\ref{pr_s}
has a pair of disjoint facets at larger distance one from another than the maximal of the three distances in $P_5$, so, none of the candidates can be embedded in $P_5$.

\begin{figure}[!h]
\begin{center}
\epsfig{file=./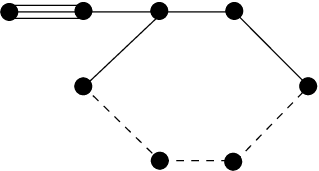,width=0.203045\linewidth}
\caption{The polytope in dimension 5.}
\label{5left}
\end{center}
\end{figure}

{\bf In dimension 4}, we have many possibilities to consider, but we always can compute volumes of polytopes applying  %Schl\"afli 
Poincar\'e formula 
(see e.g.~\cite[Part I, Chapter 7]{29})
and then we can use properties (4) and $\mathrm{(IV)}$ of Lemmas~\ref{pr_s} and~\ref{pr_d}. 
 Applying  Lemmas~\ref{pr_s} and~\ref{pr_d} together with  Lemma~\ref{dis_subd}, we reduce the problem to the following two questions:
\begin{itemize}
\item if $G_{P_4^1}$ is a subgroup of index 2 of  $G_{P_4^2}$?
\item  if $G_{P_4^3}$ is a subgroup of index 3 of  $G_{P_4^4}$? 
\end{itemize}
where $P_4^1$, $P_4^2$, $P_4^3$ and  $P_4^4$ are the polytopes shown on Fig.~\ref{4left}.

\begin{figure}[!h]
%\begin{center}
\psfrag{P1}{$P_4^1$}
\psfrag{P2}{$P_4^2$}
\psfrag{P3}{$P_4^3$}
\psfrag{P4}{$P_4^4$}
\psfrag{8}{\small $8$}
\psfrag{1}{\scriptsize $v_1$}
\psfrag{2}{\scriptsize $v_2$}
\psfrag{3}{\scriptsize $v_3$}
\psfrag{4}{\scriptsize $v_4$}
\psfrag{5}{\scriptsize $v_5$}
\psfrag{6}{\scriptsize $v_6$}
\epsfig{file=./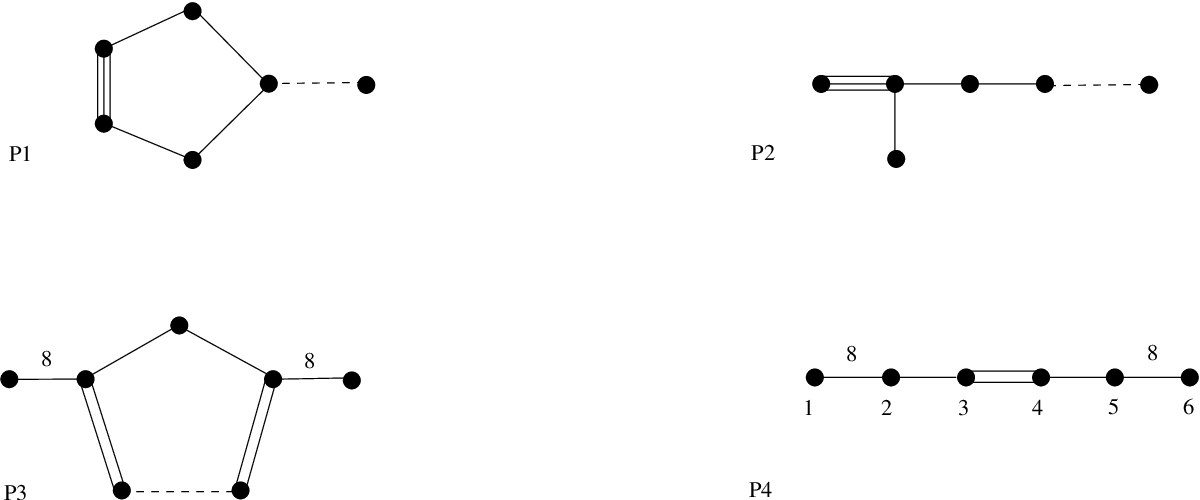,width=0.845\linewidth}
\caption{Some polytopes in dimension 4.}
\label{4left}
%\end{center}
\end{figure}

To answer the first question note, that in case of a subgroup of index 2 we actually deal with a dissection, 
so, we apply Lemma~\ref{dis_subd} to see that the answer is negative.

To answer the second question note, that if $G_{P_4^3}$ is a subgroup of  $G_{P_4^4}$, then the neighborhood of the vertex of $P_4^3$ corresponding to the subdiagram $2G_2^{(8)}\subset \Sigma(P_4^3)$ is tiled by the neighborhood of the vertex of $P_4^4$ corresponding to 
$2G_2^{(8)}\subset \Sigma(P_4^4)$, which implies that the facets of $P_4^4$ corresponding to the nodes $v_1$, $v_2$ , $v_5$, and $v_6$  are the facets of $P_4^3$,
while the remaining two facets are not. So, two copies of $P_4^4$  should be attached by the facets $v_3$ and $v_4$,
however this does not lead to a Coxeter polytope (the polytope obtained has an angle $2\pi/3$).

This completes the proof of the theorem.

\clearpage
\pagebreak

\begin{table}[!h]
\caption{Known essential polytopes of dimension at least $4$}
\label{ess-list}
\vspace{6pt}
\underline{$d=4$}:
\end{table}

\vspace{1pt}

\noindent
\u{Simplices:}\\

{\hfill 
\psfrag{3,4,5}{\tiny $3,\!4,\!5$}
\epsfig{file=./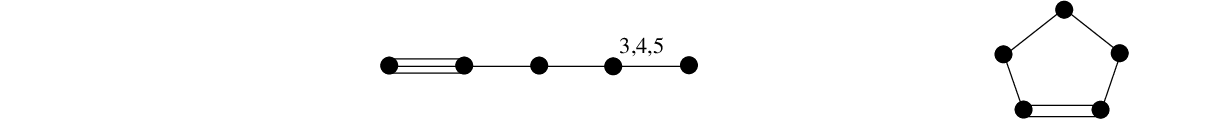,width=0.9\linewidth}
}

\vspace{20pt}

\noindent
\u{Esselmann polytopes:}\\

{\hfill
\psfrag{10}{\scriptsize $10$}
\psfrag{8}{\scriptsize $8$}
\epsfig{file=./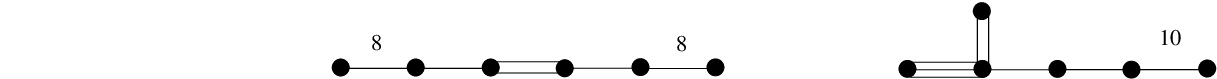,width=0.9\linewidth}
}

\vspace{22pt}

\noindent
\u{Simplicial prisms:}\\
\vspace{4pt}

{\hfill
\epsfig{file=./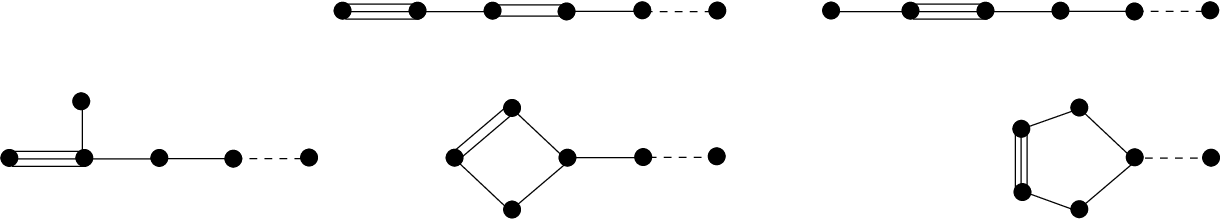,width=0.9\linewidth}
}

\vspace{22pt}

\noindent
\u{$4$-polytopes with 7 facets:}\\
\vspace{5pt}

{\hfill
\psfrag{10}{\scriptsize $10$}
\psfrag{8}{\scriptsize $8$}
\epsfig{file=./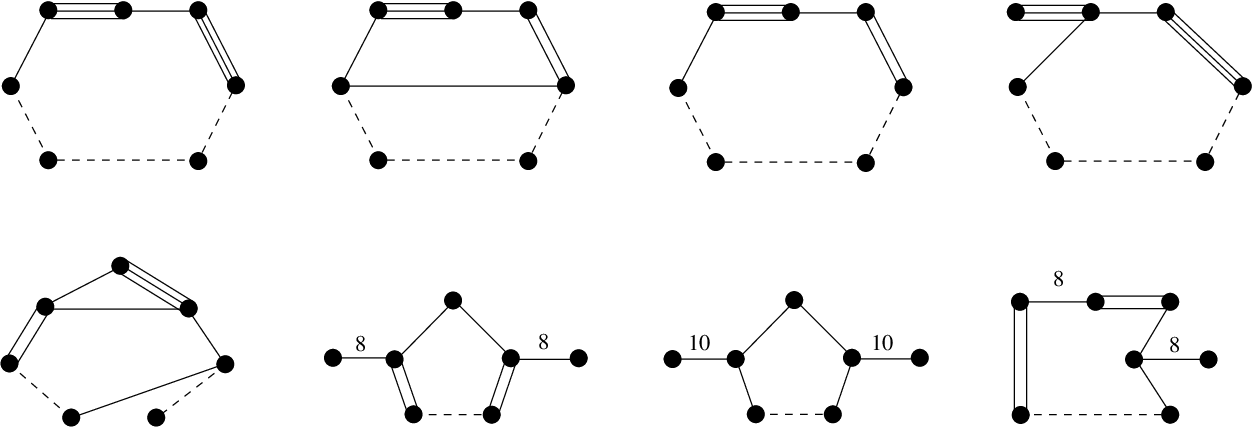,width=0.9\linewidth}
}

\vspace{22pt}

\noindent
\u{Three times truncated simplices:}\\
\vspace{5pt}

{\hfill
\psfrag{3,4,5}{\tiny $3,\!4,\!5$}
\psfrag{4,5}{\tiny $4,\!5$}
\epsfig{file=./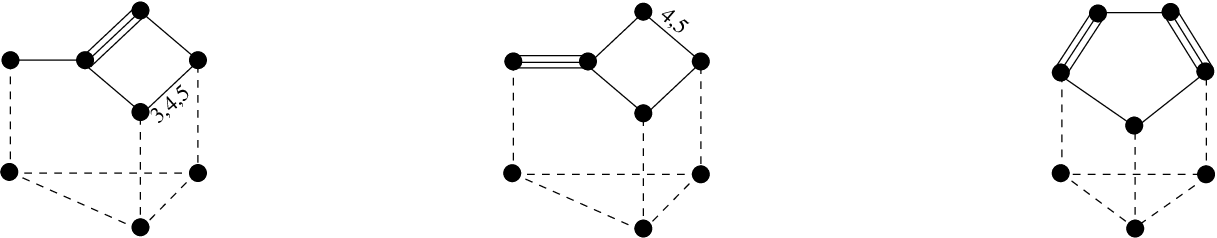,width=0.9\linewidth}
}

%\end{table}

\pagebreak
\clearpage

\addtocounter{table}{-1}

\begin{table}[!h]
\caption{Cont.}
\label{ess-list5-8}
\vspace{6pt}
\underline{$d=5$}:
\end{table}

\vspace{8pt}

\noindent
\u{Simplicial prisms:}
\vspace{5pt}

{\hfill
\psfrag{3,4}{\tiny $3,\!4$}
\epsfig{file=./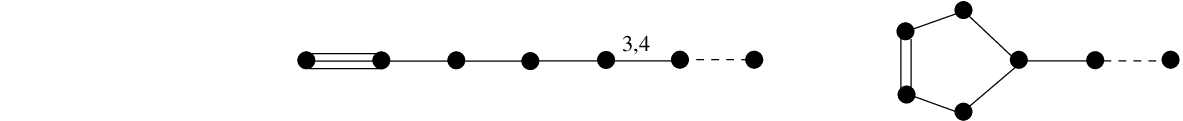,width=0.9\linewidth}
}

\vspace{10pt}
\noindent
\u{$5$-polytopes with 8 facets:}

\vspace{5pt}

{\hfill
\epsfig{file=./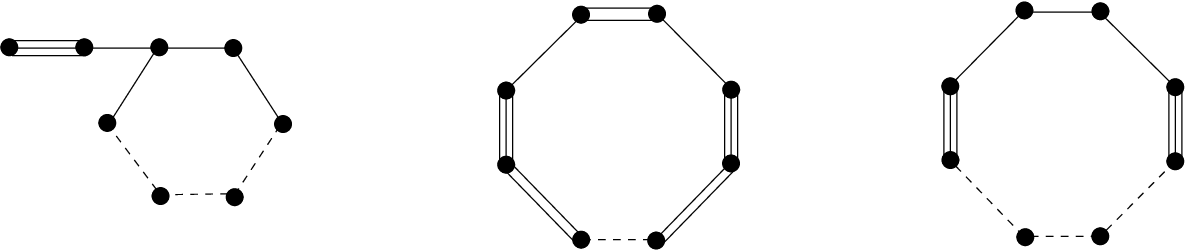,width=0.9\linewidth}
}

\vspace{10pt}
\noindent
\u{Three times truncated simplex:}
\vspace{5pt}

{\hfill
\qquad \quad
\epsfig{file=./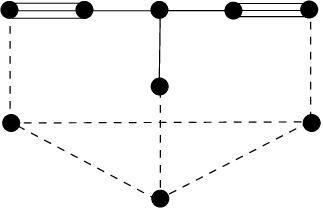,width=0.22845\linewidth}
\hfill
}
\vspace{10pt}

\vspace{25pt}
{\hfill \underline{$d=6$:} \hfill}
\vspace{-25pt}
\begin{center}
\begin{tabular}{cc}
\epsfig{file=./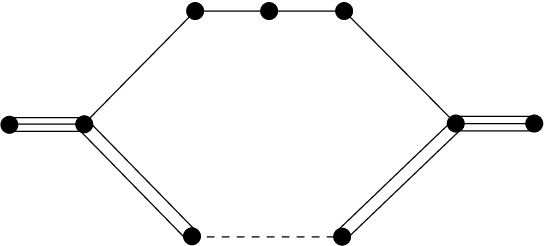,width=0.36\linewidth}&
\psfrag{10}{\scriptsize $10$}
\epsfig{file=./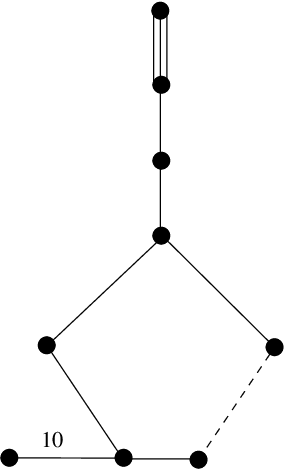,width=0.17\linewidth}\\
\\
\\
\\
\\
\underline{$d=7$}: & \underline{$d=8$}: \\
\\
\epsfig{file=./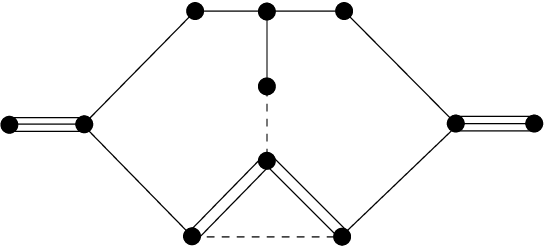,width=0.36\linewidth} & 
\raisebox{40pt}{\epsfig{file=./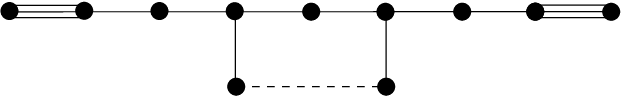,width=0.44\linewidth} }
\\
\end{tabular}
\end{center}

\pagebreak
\clearpage

\end{document}